\newcommand{\ep}{{ \epsilon  }}
\newcommand{\bq}{\begin{equation}}
\newcommand{\eq}{\end{equation}}
\newcommand{\pa}{\partial}
\newcommand{\R}{{ \mathbb{R}  }}
\newcommand{\bbr}{{ \mathbb{R}  }}
\newcommand{\calC}{{ \mathcal C  }}
\newcommand{\si}{\sigma}
\newcommand{\bke}[1]{\left( #1 \right)}
\newcommand{\bket}[1]{\left\{ #1 \right\}}
\newcommand{\norm}[1]{\left\Vert #1 \right\Vert}
\newcommand{\abs}[1]{\left| #1 \right|}
\newcommand{\Om}{{ \Omega  }}
\newcommand {\la}{\lambda}
\newcommand{\na}{\nabla}
\newcommand {\ga}{\gamma}
\newcommand {\al}{\alpha}
\newcommand {\be}{\beta}
\newcommand {\La}{\Lambda}
\newcommand{\qed}{\hfill\fbox{}\par\vspace{.2cm}}
\newcommand{\Rn}{{\mathbb R}^{n-1}}
\newcommand{\De}{\Delta}
\newcommand{\Ga}{\Gamma}
\newcommand{\te}{\theta}
\newcommand{\ri}{\rightarrow}
\newcommand{\de}{\delta}
\begin{document}
\bibliographystyle{plain}


\newtheorem{defn}{Definition}
\newtheorem{lemma}[defn]{Lemma}
\newtheorem{proposition}{Proposition}
\newtheorem{theorem}[defn]{Theorem}
\newtheorem{cor}{Corollary}
\newtheorem{remark}{Remark}
\numberwithin{equation}{section}

\def\Xint#1{\mathchoice
   {\XXint\displaystyle\textstyle{#1}}%
   {\XXint\textstyle\scriptstyle{#1}}%
   {\XXint\scriptstyle\scriptscriptstyle{#1}}%
   {\XXint\scriptscriptstyle\scriptscriptstyle{#1}}%
   \!\int}
\def\XXint#1#2#3{{\setbox0=\hbox{$#1{#2#3}{\int}$}
     \vcenter{\hbox{$#2#3$}}\kern-.5\wd0}}
\def\ddashint{\Xint=}
\def\dashint{\Xint-}
\def\aint{\Xint\diagup}

\newenvironment{proof}{{\bf Proof.}}{\hfill\fbox{}\par\vspace{.2cm}}
\newenvironment{pfthm1}{{\par\noindent\bf
            Proof of Theorem \ref{Theorem1} }}{\hfill\fbox{}\par\vspace{.2cm}}
\newenvironment{pfprop1}{{\par\noindent\bf
            Proof of Proposition  \ref{time-decay} }}{\hfill\fbox{}\par\vspace{.2cm}}
\newenvironment{pfthm2}{{\par\noindent\bf
            Proof of Theorem  \ref{counter-exmaple} }}{\hfill\fbox{}\par\vspace{.2cm}}

\newenvironment{pfthm4}{{\par\noindent\bf
Sketch of proof of Theorem \ref{Theorem6}.
}}{\hfill\fbox{}\par\vspace{.2cm}}
\newenvironment{pfthm5}{{\par\noindent\bf
Proof of Theorem 5. }}{\hfill\fbox{}\par\vspace{.2cm}}
\newenvironment{pflemsregular}{{\par\noindent\bf
            Proof of Lemma \ref{sregular}. }}{\hfill\fbox{}\par\vspace{.2cm}}

\title{Solvability for Stokes system in H\"older spaces
in bounded domains and its applications}
\author{Tongkeun Chang and Kyungkeun Kang}

\date{}

\maketitle
\begin{abstract}
We consider Stokes system in bounded domains and we present
conditions of given data, in particular, boundary data, which ensure
H\"older continuity of solutions. For H\"older continuous solutions
for the Stokes system the normal component of boundary data requires
a bit more regular than boundary data of H\"older continuous
solutions for the heat equation. We also construct an example, which
shows that H\"older continuity is no longer valid, unless the
proposed condition of boundary data is fulfilled. As an application,
we consider a certain general types of nonlinear systems coupled to
fluid equations and local well-posedness is established in H\"older
spaces.
\\
\newline{\bf 2000 AMS Subject Classification}:  primary 35K61, secondary 76D07.
\newline {\bf Keywords}: Stokes system,  Initial-boundary value problem,  H\"older continuous.
\end{abstract}



\section{Introduction}
 \setcounter{equation}{0}

In this paper, we study the initial and boundary value problem of
non-stationary Stokes system in bounded domains with $\calC^2$
boundary in $\R^n$, $n\ge 2$. To be more precise, we consider
\begin{equation}\label{CK-Aug6-10}
\partial_t u -\Delta u +\nabla p=\nabla \cdot {\mathcal F}+ f,\qquad
{\rm{div}}\, u=0 \qquad  \mbox{ in }\,\,Q_{T} :=\Omega\times [0,\,
T]
\end{equation}
with initial condition and boundary condition
\begin{equation}\label{CK-Aug6-20}
u(x,0)=u_0(x),\qquad u(x,t)=\phi(x, t) \,\,\mbox{ on
}\,\,\partial\Omega\times [0,T],
\end{equation}
where vector field $f$ and tensor ${\mathcal F}=(F_{ij})$ are given external
forces. Here we assume that the compatibility conditions hold
\begin{equation}\label{CK-Aug29-10}
{\rm div }\,u_0=0,\qquad u_0(x)=\phi(x,0)\,\,\mbox{ on
}\,\partial\Omega,\qquad \int_{\partial\Om}\phi(x,t)\cdot{\bf
n}=0\,\,\mbox{ for all }\,t\in [0,T],
\end{equation}
where ${\bf n}$ is the outward unit normal vector on $\partial\Om$.

Our main objective of this paper is to establish well-posedness of
the Stokes system \eqref{CK-Aug6-10}-\eqref{CK-Aug6-20} in the
H\"older spaces, ${\mathcal C}^{\al,\frac12 \al} (\overline Q_T)$.

 We recall
some known results related to our concerns. In case $u_0 \in {C}^{s}
({\mathbb R}^3_+), f\in {C}^{s-2,\frac{s}2 -1}({\mathbb R}^3_+\times
(0,T)), \, {\mathcal F} =0$ and $ \phi \in C^{s,\frac{s}2 }({\mathbb R}^2
\times (0,T))$  for $2<s<3$, Solonnikov  showed in \cite{sol2} that
a unique solution of the Stokes system \eqref{CK-Aug6-10}-\eqref{CK-Aug6-20} exists so that
\begin{align*}
  \| u\|_{{\dot C}^{s,\frac{s}2 }({\mathbb R}^3_+\times (0,T))}
 &\leq c\Big( \|u_0\|_{\dot{C}^{s}({\mathbb R}^3_+)}+\|\phi\|_{\dot {C}^{s,\frac{s}2}({\mathbb R}^2 \times (0,T))}
  \\
  &\qquad+ \| R'(D_t \phi_{3}) \|_{L^\infty({\mathbb R}^2 ; \dot {C}^{\frac{s}2} (0,T)) }
  +\|f\|_{{\dot C}^{s-2,\frac{s}2-1}({\mathbb R}^3_+\times (0,T))}
  \Big),
\end{align*}
where $R'$ is $\Rn$-dimensional Riesz transform. Under a weaker
assumption on $\phi$ than that of \cite{sol2}, the first author and
Jin \cite{CJ} proved that in case that $f =0$, the following
estimate holds: for $0 < \al <1$
\begin{align*}
 \| u\|_{{C}^{\al,\frac{\al}2 }({\mathbb R}^n_+\times (0,T))}
 &\leq c\Big(\|u_0\|_{{C}^{\alpha}({\mathbb R}^n_+)}+  \|R'{u_0}_n  \|_{ {C}^{\alpha}({\mathbb R}^n_+)}
  +\|\phi\|_{{C}^{\alpha,\frac{\al}2  }(\Rn\times (0,T))}\nonumber\\
  &\quad +\|R'\phi_n\|_{{C}^{\alpha,\frac{\al}2  }(\Rn\times (0,T))}
  +\max\{T^\frac12, T^{\frac{1}{2}+\frac{\al}{2}}\}
  \| {\mathcal F}\|_{C^{\al,\frac{\al}{2}}({\mathbb R}^n_+\times (0,T)) }
  \Big).
 \end{align*}
When $\Om$ is a bounded domain with $C^{2 + \al}$, $\al>0$,
Solonnikov\cite{So3,So2} showed that if $f=0$, ${\mathcal F} =0$ and $\phi \in
C(\pa \Om \times (0,T))$ with $\phi \cdot {\bf n} =0$, then the
solution $u$ of \eqref{CK-Aug6-10}-\eqref{CK-Aug6-20} is continuous in $\Om \times (0,T)$
such that
\begin{align}\label{CK-Oct30-310}
\|u\|_{L^\infty(\Om \times (0,T))}  \leq c  \| \phi\|_{L^\infty(\pa
\Om \times (0,T))}.
\end{align}
The estimate \eqref{CK-Oct30-310} was improved by the first author
and Choe as following inequality (see \cite{CC})
\begin{align*}
 \|u\|_{L^\infty(\Om \times (0,T))}  \leq  c \big( \| \phi\|_{L^\infty(\pa \Om \times (0,T))}
 +\max_{ t\in (0,T)}||\phi \cdot {\bf n}(\cdot, t)||_{Dini,\partial\Omega} \big),
\end{align*}
where for an $r_0>0$
$$
|| f||_{Dini,\pa \Omega} =\sup_{P\in \pa \Omega} \int_{0}^{r_0}
\omega(f)(r,P) \frac{dr}{r},\qquad \omega(f)(r,P)=\sup_{Q\in
B_r(P)\cap \Omega} |f(Q)-f(P)|.
$$
There are various literatures for the solvability of the  Stokes
system  \eqref{CK-Aug6-10}-\eqref{CK-Aug6-20}  with homogeneous boundary data, that is, with
$\phi=0$ (see e.g. \cite{shimizu,maremonti,maremonti3,sol1,sol3},
and references therein).
In particular, 
 the following estimate is derived in  \cite{sol3}:
\begin{align*}
\| u\|_{L^\infty({\mathbb R}^n_+ \times  (0,T))} \leq
c\Big( \|u_0\|_{L^\infty({\mathbb
R}^{n}_+)}+T^{\frac{1}{2}}\|{\mathcal F}\|_{L^\infty({\mathbb R}^n_+
\times (0,T))} \Big),
\end{align*}
where $u_0 \in C(\R^n_+)$ and $f =0, \, {\mathcal
F}=(F_{ij})_{i,j=1}^n\in C(\R^n_+\times (0,T))$ with $ \mbox{div }
u_0=0$,  $\ u_0 |_{x_n=0}=0$,  $F_{nj}|_{x_n=0}=0$ for
$j=1,2,\cdots, n$ (see also \cite{shimizu}).

We compare the system \eqref{CK-Aug6-10}-\eqref{CK-Aug29-10} to
similar situation of the heat equation
\begin{equation}\label{CK-Aug6-30}
\partial_t v -\Delta v =\nabla \cdot {\mathcal F}+ f\qquad  \mbox{ in }\,\,Q_{T} :=\Omega\times [0,\,
T]
\end{equation}
with initial condition and boundary conditions
\begin{equation}\label{CK-Aug6-40}
v(x,0)=v_0(x) \,\,\mbox{ in
}\,\,\Om,  \qquad v(x,t)=\phi(x, t) \,\,\mbox{ on
}\,\,\partial\Omega\times [0,T].
\end{equation}
If we assume that
\begin{equation}\label{CK-Aug28-10}
v_0\in \calC^{\al+1}(\overline{\Omega} ),\quad \phi\in\calC^{\al,
\frac12 \al}( \partial\Om \times [0, T]),\quad f\in L^\infty(\Om
\times (0, T)),\quad {\mathcal F} \in {\mathcal C}^{\al, \frac12 \al
}(\overline{\Omega}\times [0, T]),
\end{equation}
we then obtain the following estimate:
\begin{align}\label{CK-Aug6-50}
\notag \| v\|_{\calC^{\al, \frac12 \al }(\overline{\Omega} \times
[0, T])} &\leq c \big(\| v_0\|_{\calC^{\al}(\overline{\Omega} )}+\|
\phi\|_{\calC^{\al, \frac12 \al}( \partial\Om \times [0, T])  }\\
& \quad  +T^{1-\frac12 \al} \| f\|_{L^\infty(\Om \times (0, T))}+
T^{\frac12} \| {\mathcal F}\|_{\calC^{\al, \frac12 \al }(\overline{\Omega}\times
[0, T])} \big).
\end{align}


The estimate \eqref{CK-Aug6-50} is probably known to experts, but we
show it for clarity in section 2. In fact, we prove more than the
above estimate (see Theorem \ref{heat-domain-100} for the details).
Definition of H\"older spaces ${\mathcal
C}^{\al+1}(\overline{\Omega} )$, ${\mathcal C}^{\al, \frac12 \al
}(\overline{\Omega}\times [0, T])$ and $\calC^{\al, \frac12 \al}(
\partial\Om \times [0, T])$ are given in section 2.

\begin{remark}
In case that the Dirichlet boundary condition in \eqref{CK-Aug6-40},
$v=\phi$ on $\partial\Om \times (0, T)$, is replaced by the Neumann condition
$\frac{\partial v}{\partial {\bf n}}=\psi$ on $\partial\Om$, if $
\psi\in\calC^{1+\al, \frac12 +\frac12 \al}( \partial\Om \times [0,
T])$ is assumed, then the same estimate as \eqref{CK-Aug6-50} can be
valid.
\end{remark}

Because of non-local effect for the Stokes system, the estimate
\eqref{CK-Aug6-50} is not clear.  If we assume, however, further
additional assumptions for $u_0$ and $\phi$, then the H\"older
estimate is available. To be more precisely, if we assume, instead
of \eqref{CK-Aug28-10}, that
\begin{equation}\label{CK-Aug6-60}
u_0\in \calC_{{{\mathcal D}_{\eta}}}(\overline{\Omega}),\qquad
\phi\in \calC^{\al,\frac12 \al} (\partial\Omega\times[0,T]),\qquad
\phi\cdot {\bf n}\in \dot \calC_{{\mathcal D}_{\eta}} (\pa \Om;\dot
\calC^{\frac12 \al} [0, T]),
\end{equation}
\begin{equation}\label{CK-Aug28-20}
f \in 
L^\infty(0, T; \calC_{D_\eta}(\overline \Om)),\qquad {\mathcal F}\in  \calC^{\al,
\frac12 \al}( \overline \Om \times [0, T]),
\end{equation}
then the similar estimate as \eqref{CK-Aug6-50} can be obtained. The
details of function spaces $\calC_{{{\mathcal
D}_{\eta}}}(\overline{\Omega})$, $\calC^{\al,
\frac12 \al}( \overline \Om \times [0, T])$ and $\dot \calC_{{\mathcal
D}_{\eta}} (\pa \Om;\dot \calC^{\frac12 \al} [0, T])$ are also given
in section 2.

Our first main result reads as follows:

\begin{theorem}\label{mainthm-Stokes}
Let $0 < \al<1 $. Let $ u_0, \phi, f, {\mathcal F}$ satisfy the
conditions \eqref{CK-Aug6-60}-\eqref{CK-Aug28-20}.
Furthermore, $u_0$ and $\phi$ satisfy the compatibility condition
\eqref{CK-Aug29-10}.
Then, there exists a unique weak
solution $u $ of the Stokes equations of
\eqref{CK-Aug6-10}-\eqref{CK-Aug6-20} in the class $
\calC^{\al,\frac12 \al}(\overline{\Om} \times [0, T])$ such that
\begin{align*}
&\| u\|_{{\mathcal C}^{\al,\frac12 \al} (\overline{\Om} \times [0,
T])} \leq c\big( \| \phi\|_{{\mathcal C}^{\al, \frac12 \al}(\pa \Om
\times [0, T])} +  \|  \phi \cdot {\bf n}\|_{\dot {\mathcal
C}_{{\mathcal D}_\eta} (\pa \Omega; \dot {\mathcal
C}^{\frac{\al}2}[0, T])}  +\| u_0\|_{{\mathcal
C}^{\al}_{{\mathcal D}_\eta} (\overline{\Om})}\\
&\qquad +  \max ( T^{\frac12 }, T^{\frac12 -\frac12 \al}  )  \|
{\mathcal F}\|_{  \calC^{\al, \frac12 \al}(\overline{\Om} \times [0, T]) } +
\max (T, T^{\frac12 -\frac12 \al} )  \|
 f \|_{ L^\infty((0, T);  \mathcal C_{{\mathcal D}_\eta} (\overline{\Om}
 ))}\big).
\end{align*}
\end{theorem}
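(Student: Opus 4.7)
The plan is to decompose the problem by linearity and treat each piece with a different method. Write $u=u_{I}+u_{F}+u_{H}$, where $u_{I}$ solves \eqref{CK-Aug6-10}--\eqref{CK-Aug6-20} with zero force and zero boundary data but initial datum $u_{0}$; $u_{F}$ solves with zero initial and boundary data and with forces $f,{\mathcal F}$; and $u_{H}$ solves with zero initial datum and zero force but boundary datum $\phi$. The compatibility \eqref{CK-Aug29-10} ensures this splitting is consistent.

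For $u_{I}$ and $u_{F}$, since the boundary datum is zero I would use the Helmholtz decomposition and the Stokes semigroup, representing
\[
u_{I}(t)=e^{-tA}u_{0},\qquad u_{F}(t)=\int_{0}^{t}e^{-(t-s)A}\bkp{Pf+P\nabla\cdot{\mathcal F}}(s)\,ds,
\]
with $A$ the Stokes operator on $\Om$ and $P$ the Leray projector. Modulo the pressure gradient, the required $\calC^{\al,\frac12\al}$ bounds reduce to the heat-equation estimate of Theorem \ref{heat-domain-100}, whose $T$-weights are precisely $\max(T,T^{\frac12-\frac12\al})$ and $\max(T^{\frac12},T^{\frac12-\frac12\al})$, matching the claimed form. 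The reason $f$ and $u_{0}$ are placed in the Dini-type class $\calC_{{\mathcal D}_\eta}$ rather than plain $\calC^{\al}$ is exactly that the singular integral defining $P$ is bounded from $\calC_{{\mathcal D}_\eta}$ into $\calC^{\al}$ by a Dini-type Calder\'on--Zygmund argument, which is what transfers the heat estimate to the Stokes estimate.

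The main step is $u_{H}$, which I would handle by localization and flattening. Choose a finite open cover $\{U_{j}\}_{j=0}^{N}$ of $\overline\Om$ with $U_{0}\Subset\Om$ and, for $j\ge 1$, a $C^{2}$ diffeomorphism $\Psi_{j}$ straightening $U_{j}\cap\pa\Om$ onto a piece of $\R^{n}_{+}$. Let $\{\zeta_{j}\}$ be a subordinate partition of unity, and recover the pressure $p_{H}$ from $u_{H}$ by solving the Neumann problem $\Delta p_{H}=0$ in $\Om$, $\pa_{\bf n} p_{H}=(\Delta u_{H})\cdot{\bf n}$ on $\pa\Om$. The interior piece $\zeta_{0}u_{H}$ is treated as in the previous paragraph. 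For $j\ge 1$ the push-forward $v_{j}:=(\zeta_{j}u_{H})\circ\Psi_{j}^{-1}$ satisfies, after freezing coefficients, a Stokes system in $\R^{n}_{+}$ with boundary datum $(\zeta_{j}\phi)\circ\Psi_{j}^{-1}$ and a right-hand side consisting of the commutators $[\Delta,\zeta_{j}]u_{H}$, $u_{H}\nabla\zeta_{j}$, and the pressure commutator $p_{H}\nabla\zeta_{j}$. Apply the Chang--Jin half-space estimate recalled in the introduction: the sensitive term $R'\phi_{n}$ is controlled precisely by the hypothesis $\phi\cdot{\bf n}\in\dot\calC_{{\mathcal D}_{\eta}}(\pa\Om;\dot\calC^{\frac12\al}[0,T])$, because after flattening the outward normal becomes $-e_{n}$ and the tangential Riesz transform falls on the normal component.

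The hard part will be closing the estimate against the nonlocal pressure commutator $p_{H}\nabla\zeta_{j}$ and the lower-order commutators, which produce norms of $u_{H}$ itself on the right-hand side. My strategy is to split $[0,T]$ into short subintervals on which the small factor $\max(T^{\frac12},T^{\frac12-\frac12\al})$ from Theorem \ref{heat-domain-100} makes the lower-order contributions absorbable on the left, and then iterate, starting from an a priori $L^\infty$ bound of the type \eqref{CK-Oct30-310} to guarantee the induction hypothesis. Solvability of the Neumann problem for the pressure in $\calC^{\al,\frac12\al}$ under the Dini hypothesis on $\phi\cdot{\bf n}$ is the other technical pillar, and it uses the same class that controls the Riesz transform. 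Finally, uniqueness is immediate from linearity: applying the a priori estimate to the difference of two solutions with vanishing data forces that difference to vanish in $\calC^{\al,\frac12\al}(\overline\Om\times[0,T])$.
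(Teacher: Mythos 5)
Your decomposition $u=u_I+u_F+u_H$ is already inconsistent with the data: you assign to $u_I$ the initial datum $u_0$ and the homogeneous boundary condition $u_I|_{\pa\Om\times(0,T)}=0$, but the compatibility condition \eqref{CK-Aug29-10} only gives $u_0=\phi(\cdot,0)$ on $\pa\Om$, not $u_0|_{\pa\Om}=0$. Unless $\phi(\cdot,0)\equiv 0$, such a $u_I$ is forced to be discontinuous at $t=0$ on $\pa\Om$, hence cannot lie in $\calC^{\al,\frac12\al}(\overline\Om\times[0,T])$, and the splitting collapses at the outset. The paper avoids this by working first in the whole space: $u_0$ is extended divergence-freely to $\R^n$, $v=\Gamma*\tilde u_0$ is the whole-space caloric extension (which imposes no boundary condition at all), and the trace $v|_{\pa\Om\times[0,T]}$ is then subtracted from $\phi$, producing a corrected boundary datum $G$ that vanishes at $t=0$ precisely because of \eqref{CK-Aug29-10}. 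The forcing pieces $V^1,V^2$ are likewise built with the whole-space Leray projector $\mathbb P=\mathrm{Id}+R_iR_j$ on the compactly-supported extensions; on a bounded domain the Leray projector is not a composition of Riesz transforms, so your proposed Dini--Calder\'on--Zygmund argument for $P_\Om$ would not apply as stated.

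The localization step for $u_H$ has a separate, derivative-counting gap. Cutting off produces $[\Delta,\zeta_j]u_H=2\nabla\zeta_j\cdot\nabla u_H+(\Delta\zeta_j)u_H$ and the pressure commutator $p_H\nabla\zeta_j$, which involve $\nabla u_H$ and $p_H$ — quantities one (respectively, effectively two) derivatives above the target class $\calC^{\al,\frac12\al}$. Theorem \ref{heat-domain-100} gains at most one derivative from a divergence-form right-hand side, and your Neumann problem $\pa_{\bf n}p_H=(\Delta u_H)\cdot{\bf n}$ requires two derivatives of $u_H$ that are simply unavailable; splitting into short time intervals only shrinks the time factor and does nothing about the spatial derivative loss, so the absorption argument cannot close. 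The paper does not localize: it represents $w$ globally by Solonnikov's hydrodynamical layer potentials $w=\mathcal U[\Phi]+\nabla\mathcal V[\Psi]$, inverts the resulting boundary integral system of the second kind (Lemmas \ref{lemma1}--\ref{lemma2}, Proposition \ref{prop1}), and proves the interior estimate (Proposition \ref{CK-June21-1000}) directly from the kernel bounds \eqref{boundary-boundary1}--\eqref{boundary-boundary3} and a weighted-gradient characterization of $\dot\calC^\al$. There are no commutators and no need to control the pressure pointwise, which is exactly what makes the low-regularity class tractable here.
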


The notion of weak solution of the Stokes system in the class
$\calC^{\al,\frac12 \al}(\overline{\Om} \times [0, T])$ is given in
section 3 (see Definition \ref{defn-Stokes}).

We note that $\phi\in\calC^{\al, \frac12 \al}( \partial\Om \times
[0, T])$ implies the condition \eqref{CK-Aug6-60} with replacement
of $\alpha$ by $\beta$ for any $0<\beta<\alpha$, since
\begin{align*}
\| \phi\|_{{\mathcal C}^{\beta, \frac12 \beta}(\pa \Om \times [0,
T])} +  \|  \phi \cdot {\bf n}\|_{\dot {\mathcal C}_{{\mathcal
D}_\eta} (\pa \Omega; \dot {\mathcal C}^{\frac{\beta}2}[0, T])}\le
c\| \phi\|_{{\mathcal C}^{\al, \frac12 \al}(\pa \Om \times [0, T])}.
\end{align*}
Similarly, we also note that
\begin{align*}
\| u_0\|_{{\mathcal C}^{\beta}_{{\mathcal D}_\eta}
(\overline{\Om})}\le c\| u_0\|_{{\mathcal
C}^{\alpha}(\overline{\Om})},\qquad \| {\mathcal F}\|_{  \calC^{\beta, \frac12
\beta}(\overline{\Om} \times [0, T]) }\le c\|{\mathcal F}\|_{  \calC^{\al,
\frac12 \al}(\overline{\Om} \times [0, T]) },
\end{align*}
\begin{align*}
\| f \|_{ L^\infty(0, T;  \mathcal C^{\beta}_{{\mathcal D}_\eta}
(\overline{\Om} ))}\le c\norm{f}_{\calC^{\al, \frac12 \al}(
\overline{\Om} \times (0, T)) }.
\end{align*}
Therefore, a direct consequence of Theorem \ref{mainthm-Stokes} is
the following:

\begin{cor}\label{corollary-Stokes}
Let $\alpha\in(0,1)$. Suppose that $u_0\in
\calC^{\al}(\overline{\Omega} )$, $\phi\in \calC^{\al,\frac12 \al}
(\partial\Omega\times[0,T])$, and $u_0$ and $\phi$ satisfy the
compatibility condition \eqref{CK-Aug29-10}. Assume further that $f,
{\mathcal F} \in \calC^{\al,\frac12 \al}
(\overline{\Omega}\times[0,T])$. Then, there exists unique weak
solution $u$ of \eqref{CK-Aug6-10}-\eqref{CK-Aug6-20} in the class
$\calC^{\beta,\frac12 \beta} (\overline \Om \times [0,T])$ for any
$\beta<\al$. Furthermore, $u$ satisfies
\begin{align*}
\norm{u}_{\calC^{\beta,\frac12 \beta} (\overline \Om \times [0,T])} & \le c \big(
\|u_0\|_{\calC^{\al}(\overline{\Omega} )} + \| \phi\|_{\calC^{\al,
\frac12 \al}( \partial\Om \times [0, T])  } +  \max ( T^{\frac12 }, T^{\frac12 -\frac12 \be}  )
\norm{{\mathcal F}}_{\calC^{\al, \frac12 \al}( \overline \Om \times [0, T]) }\\
& \qquad
+ \max (T, T^{\frac12 -\frac12 \be} ) \norm{f}_{\calC^{\al,
\frac12 \al}(\overline \Om \times [0, T]) } \big).
\end{align*}
\end{cor}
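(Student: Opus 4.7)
The plan is to derive Corollary \ref{corollary-Stokes} as a direct packaging of Theorem \ref{mainthm-Stokes} at a slightly reduced exponent $\beta$, making use of the embedding inequalities already displayed in the excerpt between the two results. Fix $\beta \in (0,\al)$. First I would verify that, under the hypotheses of the corollary, the data $(u_0,\phi,f,\mathcal F)$ satisfy the hypotheses \eqref{CK-Aug6-60}--\eqref{CK-Aug28-20} of Theorem \ref{mainthm-Stokes} with $\beta$ in the role of $\al$. This is immediate from the four embeddings recorded just before the corollary: $u_0 \in \calC^{\al}(\overline{\Om})$ embeds into $\calC^{\beta}_{{\mathcal D}_\eta}(\overline{\Om})$; $\phi \in \calC^{\al,\frac12\al}(\pa\Om \times [0,T])$ controls both $\calC^{\beta,\frac12\beta}(\pa\Om\times[0,T])$ and the Dini-type norm $\dot\calC_{{\mathcal D}_\eta}(\pa\Om;\dot\calC^{\frac{\beta}{2}}[0,T])$ of $\phi\cdot {\bf n}$; and $f,\mathcal F \in \calC^{\al,\frac12\al}(\overline{\Om} \times [0,T])$ embeds into $L^\infty(0,T;\calC^{\beta}_{{\mathcal D}_\eta}(\overline{\Om}))$ and $\calC^{\beta,\frac12\beta}(\overline{\Om}\times[0,T])$, respectively. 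The compatibility condition \eqref{CK-Aug29-10} carries over verbatim.

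With these hypotheses verified at exponent $\beta$, Theorem \ref{mainthm-Stokes} produces a unique weak solution $u \in \calC^{\beta,\frac12\beta}(\overline{\Om} \times [0,T])$ satisfying the estimate displayed in its statement (with $\beta$ replacing $\al$). I would then substitute each of the four embedding inequalities into the corresponding term on the right-hand side of that estimate, absorbing the embedding constants (which depend on $\al$, $\beta$, and $\Om$) into the final constant $c$. The resulting inequality is exactly the bound claimed in the corollary, with the boundary datum entering only through its $\calC^{\al,\frac12\al}$ norm, the initial datum only through its $\calC^{\al}$ norm, and $f,\mathcal F$ only through their $\calC^{\al,\frac12\al}$ norms, multiplied by the time factors $\max(T^{\frac12},T^{\frac12-\frac12\beta})$ and $\max(T,T^{\frac12-\frac12\beta})$ that already appear in Theorem \ref{mainthm-Stokes} at exponent $\beta$. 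Uniqueness in the class $\calC^{\beta,\frac12\beta}(\overline{\Om}\times[0,T])$ is inherited directly from the uniqueness assertion of Theorem \ref{mainthm-Stokes}, since the definition of weak solution (Definition \ref{defn-Stokes}) depends only on the test-function identity and the compatibility of the data, not on the particular H\"older exponent.

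There is no substantial obstacle here: the corollary is essentially a cleaner restatement of Theorem \ref{mainthm-Stokes}, trading the finer hypotheses \eqref{CK-Aug6-60}--\eqref{CK-Aug28-20} (in particular the Dini-type control on $\phi\cdot {\bf n}$ and the $\calC_{{\mathcal D}_\eta}$ regularity of $u_0$ and $f$) for purely H\"older hypotheses, at the cost of an arbitrarily small loss in the output exponent. The only point that deserves a moment of care is the embedding $\calC^{\al,\frac12\al}(\pa\Om \times [0,T]) \hookrightarrow \dot\calC_{{\mathcal D}_\eta}(\pa\Om;\dot\calC^{\frac{\beta}{2}}[0,T])$ for $\beta<\al$, which is precisely what forces the strict inequality $\beta<\al$ and which is already one of the asserted embeddings in the excerpt; verifying it amounts to a standard interpolation between the $\calC^{\al,\frac12\al}$ bound and continuity on $\pa\Om$.
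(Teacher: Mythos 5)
Your proposal matches the paper's approach exactly: the paper derives Corollary \ref{corollary-Stokes} as a direct consequence of Theorem \ref{mainthm-Stokes} via precisely the four embedding inequalities you invoke, applying the theorem at the reduced exponent $\beta$ and absorbing the embedding constants. Your additional remark on why the strict inequality $\beta<\al$ is forced (the Dini-type embedding of $\phi\cdot{\bf n}$) is correct and is the same point the paper implicitly relies on.
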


Next, we show that $\beta$ cannot be extended to $\al$ in Corollary
\ref{corollary-Stokes}. To be more precise, there exists a boundary
data $\phi\in \calC^{\al,\frac12 \al} (\partial\Omega\times[0,T])$
such that $u\notin \calC^{\al,\frac12 \al} (\overline \Om \times [0,T])$, even
if $u_0$, $f$ and $F$ are smooth. This implies that the result in
Theorem \ref{mainthm-Stokes} seems optimal. Our second result is to
construct a solution $u\notin \calC^{\al,\frac12 \al}
(\overline \Om \times [0,T])$ of \eqref{CK-Aug6-10}-\eqref{CK-Aug29-10} when
$\phi\in\calC^{\al, \frac12 \al}(\pa \Om \times [0, T])$.

\begin{theorem}\label{counter-exmaple}
Theorem \ref{mainthm-Stokes} is not true, if $\phi$ is assumed to
belong to $\calC^{\al, \frac12 \al}(\pa \Om \times [0, T])$ only.
\end{theorem}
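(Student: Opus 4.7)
The plan is to construct an explicit counterexample to the $\calC^{\al,\frac12\al}$ estimate when the extra Dini-type control on $\phi\cdot{\bf n}$ is removed, working first in the half-space $\Om=\bbr^n_+$ and then transferring to a bounded $\calC^2$ domain. The underlying mechanism is the non-local character of the pressure in the Stokes system: for the heat equation the solution is obtained from purely local heat-type kernels acting on $\phi$, whereas for the Stokes system the pressure is produced by a singular integral acting on the normal component of $\phi$, and this singular integral fails to preserve $\calC^{\al,\frac12\al}$ in the absence of the additional structural hypothesis on $\phi\cdot{\bf n}$.

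The first step is to take a boundary datum of product type $\phi(x',t)=\eta(t)\psi(x'){\bf e}_n$, where $\eta\in C^\infty_c((0,T))$ is a smooth time cutoff vanishing near $t=0$ and $\psi\in\calC^{\al}(\bbr^{n-1})$ is a compactly supported profile to be chosen. Taking $u_0=0$, $f=0$, and ${\mathcal F}=0$, the half-space Stokes problem admits an explicit Green-function representation, so $u$ and $p$ can be written as convolutions of $\eta(t)\psi(x')$ with explicit kernels. A direct inspection shows that the leading contribution to $\nabla p$ on the boundary is the tangential Riesz transform $R'\psi$ multiplied by a distribution in $(x_n,t)$, precisely matching the quantity controlled by the $\dot\calC_{{\mathcal D}_\eta}$ norm in Theorem \ref{mainthm-Stokes}.

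The second step is to choose $\psi$ as a lacunary-type sum
\[
\psi(x')=\sum_{k\ge 0}2^{-k\al}\chi\bke{2^k(x'-x_0)},
\]
where $\chi\in C^\infty_c(\bbr^{n-1})$ is a fixed bump whose tangential Riesz transform satisfies $R'\chi(0)\ne 0$. Standard Littlewood--Paley estimates give $\psi\in\calC^{\al}(\bbr^{n-1})$ uniformly, while a dyadic computation shows that $R'\psi$ has a logarithmic divergence at $x_0$. Propagating this divergence through the representation formula produces a boundary point $P=(x_0,0)$ and a time $t_0$ in the support of $\eta$ at which
\[
\limsup_{y\to P}\frac{|u(y,t_0)-u(P,t_0)|}{|y-P|^\al}=+\infty,
\]
so $u\notin\calC^{\al,\frac12\al}(\overline{\bbr^n_+}\times[0,T])$ even though $\phi\in\calC^{\al,\frac12\al}(\pa\bbr^n_+\times[0,T])$.

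The final step is to transfer the half-space example to a bounded $\calC^2$ domain: flatten $\pa\Om$ near a boundary point $P_0$, pull back $\phi$ through the flattening chart, multiply by a spatial cutoff supported in a small neighborhood of $P_0$, and add a smooth divergence-free correction so that the compatibility condition \eqref{CK-Aug29-10}, including the zero-flux requirement, holds. The logarithmic blowup localized at $P_0$ cannot be cancelled by the smooth corrections, so the constructed $\phi$ furnishes the desired counterexample. The principal obstacle is the sharp asymptotic extraction of the Riesz-type singularity from the half-space Stokes kernel and the verification that it is not absorbed by smoother remainders; once this is done, the lacunary H\"older construction and the localization step are comparatively routine.
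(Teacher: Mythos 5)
Your proposal is structurally misaligned with the actual obstruction, and the specific boundary datum you propose would in fact \emph{satisfy} the hypotheses of Theorem \ref{mainthm-Stokes}, so it cannot serve as a counterexample. The issue is your separation of variables $\phi=\eta(t)\psi(x')\,\mathbf{e}_n$ with $\eta\in C^\infty_c$. The additional hypothesis you are trying to violate is
\[
\|\phi\cdot\mathbf n\|_{\dot\calC_{\mathcal D_{\eta_0}}(\pa\Om;\dot\calC^{\frac12\al}[0,T])}=\sup_{P,Q}\sup_{s,t}\frac{|\phi\cdot\mathbf n(P,t)-\phi\cdot\mathbf n(P,s)-\phi\cdot\mathbf n(Q,t)+\phi\cdot\mathbf n(Q,s)|}{|t-s|^{\frac12\al}\,\eta_0(|P-Q|)}<\infty
\]
for some Dini modulus $\eta_0$. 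For your product ansatz the mixed second difference factors as $(\eta(t)-\eta(s))(\psi(P)-\psi(Q))$; since $\eta$ is Lipschitz in time, $\sup_{s,t}|\eta(t)-\eta(s)|/|t-s|^{\frac12\al}<\infty$, and $|\psi(P)-\psi(Q)|\le\|\psi\|_{\dot\calC^\al}|P-Q|^\al$. Because $\eta_0(r)=r^\al$ is itself an admissible Dini modulus (indeed $\int_0^1 r^{\al-1}\,dr<\infty$ for $\al>0$), your $\phi\cdot\mathbf n$ belongs to $\dot\calC_{\mathcal D_{\eta_0}}(\pa\Om;\dot\calC^{\frac12\al}[0,T])$ with $\eta_0(r)=r^\al$, and Theorem \ref{mainthm-Stokes} then \emph{gives} $u\in\calC^{\al,\frac12\al}$. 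Whatever singularity $R'\psi$ may have plays no role: the positive theorem already covers your data.

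The point the paper exploits is that the second-difference condition is a genuinely joint space-time condition, and cannot be broken by data that is smooth (in particular Lipschitz) in time. The paper's construction in Section 4 takes $g_2(x_1,t)=(|x_1|^2+t)^{\frac12\al}\arctan(x_1^\al/t^{\frac12\al})\arctan(t^{\frac12\al}/x_1^\al)\chi_{\{x_1>0\}}\chi_{\{t>0\}}\phi(x_1)$, a function whose space and time dependence are entangled along the parabolic diagonal, so that it lies in $\calC^{\al,\frac12\al}$ but its mixed second differences along a curve $x_1\sim t^{\beta}$ gain nothing from the spatial displacement, defeating every Dini modulus simultaneously. Your lacunary choice of $\psi$ also does not do what you assert: the Riesz transform of a compactly supported $\calC^\al$ function is again $\calC^\al$ (hence bounded), so $R'\psi$ has no logarithmic blow-up at a point for fixed $\al\in(0,1)$; the sum $\sum_k 2^{-k\al}(R'\chi)(0)$ converges. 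To repair the argument you would need to replace your product datum by one with the kind of parabolic space-time correlation the paper builds in, and then analyse the resulting boundary potentials rather than a static Riesz transform.
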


As an application, we consider nonlinear types of drift equations
coupled fluid equations. Let $\rho:\Omega\times [0, T]\rightarrow
\R$, $\theta:\Omega\times [0, T]\rightarrow \R$ and $u:\Omega\times
[0, T]\rightarrow \R^n$ satisfy
\begin{equation}\label{CK-Aug18-9}
\partial_t \rho + u \cdot \nabla  \rho- \Delta \rho= \nabla\cdot F(\rho, \theta, \nabla \theta,
u),
\end{equation}
\begin{equation}\label{CK-Aug18-10}
\partial_t \theta + u \cdot \nabla  \theta- \Delta \theta= f(\rho, \theta, \nabla \theta,
u),
\end{equation}
\begin{equation}\label{CK-Aug18-20}
\partial_t u + u\cdot \nabla u -\Delta u +\nabla p=G(\rho, \theta, \nabla \theta,
u),
\qquad {\rm div}\, u=0
\end{equation}
with initial data  $\rho_0$, $\theta_0$ and $u_0$.
Here 
$f:\R\times\R\times\R^n\times \R^n\rightarrow\R$ and $F,
G:\R\times\R\times\R^n\times \R^n\rightarrow\R^n$ 
are $\calC^1$ scalar and vector valued functions with polynomial
growth conditions. To be more precise, we assume that for $(x, y, z,
w)\in \R\times\R\times\R^n\times \R^n$ there exists an integer $l$
with $1\le l<\infty$ such that $f$, $F$, and $G$ satisfy
\begin{equation}\label{CK-Aug17-10}
\abs{f(x, y, z, w)}+\abs{F(x, y, z, w)}+\abs{G(x, y, z, w)}
\le C\bke{1+\abs{x}+\abs{y}+\abs{z}+\abs{w}}^l,
\end{equation}
\begin{equation}\label{CK-Aug17-10-5}
\abs{\nabla f(x, y, z, w)}+\abs{\nabla F(x, y, z, w)}+\abs{\nabla
G(x, y,
z, w)}
\le C\bke{1+\abs{x}+\abs{y}+\abs{z}+\abs{w}}^{l-1}.
\end{equation}

Under our consideration, no-flux boundary conditions are assigned
for $\rho$ and $\theta$ and no-slip boundary condition of $u$ is
assumed, namely
\begin{equation}\label{CK-Aug17-11}
\frac{\partial\rho}{\pa {\bf n}}=0,\qquad\frac{\partial\theta}{\pa
{\bf n}}=0,\qquad u=0\qquad \mbox{ on }\,\,\partial\Omega.
\end{equation}

For nonlinear system \eqref{CK-Aug18-9}-\eqref{CK-Aug17-11}, with
the aid of results in Theorem \ref{mainthm-Stokes}, we can also
establish local well-posedness in the H\"older spaces. Our last main
result reads as follows:

\begin{theorem}\label{Theorem1}
Let the initial data $(\rho_0, \theta_0, u_0)$ be given in
$\calC^{\alpha}(\overline{\Omega})\times\calC^{\alpha+1}(\overline{\Omega})\times
\calC^{\al}_{{{\mathcal D}_{\eta}}}(\overline{\Omega})$ for
$\alpha\in(0,1)$. Assume that $F$, $G$ and $f$ satisfy the
assumption \eqref{CK-Aug17-10}-\eqref{CK-Aug17-10-5}. There exists $T_1>0 $ such that a
pair of unique solution $(\rho, \theta, u)$ for
\eqref{CK-Aug18-9}-\eqref{CK-Aug18-20} with \eqref{CK-Aug17-11} can be constructed in the
class $\calC^{\al,\frac12 \al} ( \overline \Om \times [0,T_1])\times
\calC^{\al+1,\frac12 \al+\frac12 } ( \overline \Om \times [0,T_1])\times
\calC^{\al,\frac12 \al} (\overline \Om \times [0,T_1])$.
\end{theorem}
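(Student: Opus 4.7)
The plan is to run a contraction mapping argument in a product H\"older space on a short time interval $[0,T_1]$. For $R>0$ and $T>0$ to be chosen, set
\begin{align*}
X_T := \calC^{\al,\frac12\al}(\overline{\Om}\times[0,T])\times\calC^{\al+1,\frac12\al+\frac12}(\overline{\Om}\times[0,T])\times\calC^{\al,\frac12\al}(\overline{\Om}\times[0,T]),
\end{align*}
and let $B_R\subset X_T$ be the closed ball of radius $R$ centered at a fixed time-extension of the initial data $(\rho_0,\theta_0,u_0)$. Given $(\tilde\rho,\tilde\theta,\tilde u)\in B_R$, define $(\rho,\theta,u)=\Phi(\tilde\rho,\tilde\theta,\tilde u)$ by solving the three \emph{decoupled linear} problems obtained by freezing the nonlinear inputs. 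Using ${\rm div}\,\tilde u=0$ to rewrite the convection terms in divergence form, $\tilde u\cdot\nabla\tilde\rho=\nabla\cdot(\tilde u\tilde\rho)$ and $\tilde u\cdot\nabla\tilde u=\nabla\cdot(\tilde u\otimes\tilde u)$, we solve: the heat equation with Neumann boundary for $\rho$ with source $\nabla\cdot\bke{F(\tilde\rho,\tilde\theta,\nabla\tilde\theta,\tilde u)-\tilde u\tilde\rho}$; the heat equation with Neumann boundary for $\theta$ with source $f(\tilde\rho,\tilde\theta,\nabla\tilde\theta,\tilde u)-\tilde u\cdot\nabla\tilde\theta$; and the Stokes system with no-slip boundary for $u$, with $\mathcal F=-\tilde u\otimes\tilde u$ and external force $G(\tilde\rho,\tilde\theta,\nabla\tilde\theta,\tilde u)$. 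All three problems inherit the original initial data, and the compatibility conditions \eqref{CK-Aug29-10} for the Stokes component are automatic since $\phi\equiv 0$.

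Applying \eqref{CK-Aug6-50} and its Neumann analogue (cf.\ the Remark) to the first two components, and Theorem \ref{mainthm-Stokes} to the third, we obtain an estimate for $\|\Phi(\tilde\rho,\tilde\theta,\tilde u)\|_{X_T}$. Theorem \ref{mainthm-Stokes} is applicable because $u_0\in\calC_{{\mathcal D}_\eta}^\al(\overline{\Om})$ by hypothesis, $\phi\equiv0$ trivially verifies the boundary assumptions in \eqref{CK-Aug6-60}, and any H\"older-continuous function lies in $\calC_{{\mathcal D}_\eta}$, so $G(\tilde\rho,\tilde\theta,\nabla\tilde\theta,\tilde u)\in L^\infty(0,T;\calC_{{\mathcal D}_\eta}(\overline{\Om}))$. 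By the polynomial growth bound \eqref{CK-Aug17-10} and the multiplicative algebra property of parabolic H\"older spaces, each nonlinear source term is controlled by $C(1+R)^l$ in its respective norm. Because every data-dependent term carries a strictly positive power of $T$ as a prefactor, choosing $R$ roughly twice the size of the initial data and $T_1$ small enough yields $\Phi(B_R)\subset B_R$.

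For the contraction step, take $(\tilde\rho_i,\tilde\theta_i,\tilde u_i)\in B_R$ for $i=1,2$; the difference $\Phi(\tilde\rho_1,\tilde\theta_1,\tilde u_1)-\Phi(\tilde\rho_2,\tilde\theta_2,\tilde u_2)$ solves the same linear systems with zero initial data and right-hand sides given by the differences of the nonlinear terms. The Lipschitz bound \eqref{CK-Aug17-10-5}, combined with the mean value theorem, gives
\begin{align*}
\|F(\tilde\rho_1,\tilde\theta_1,\nabla\tilde\theta_1,\tilde u_1)-F(\tilde\rho_2,\tilde\theta_2,\nabla\tilde\theta_2,\tilde u_2)\|_{\calC^{\al,\frac12\al}} \le C(1+R)^{l-1}\|(\tilde\rho_1,\tilde\theta_1,\tilde u_1)-(\tilde\rho_2,\tilde\theta_2,\tilde u_2)\|_{X_T},
\end{align*}
and analogously for $f$, $G$, and the bilinear convection pieces. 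Together with the same linear estimates, this yields a Lipschitz constant of the form $C(R)\max\bke{T_1^{\frac12},T_1^{\frac12-\frac12\al},T_1^{1-\frac12\al}}$, which is strictly less than $1$ for $T_1$ small. Banach's fixed-point theorem then produces the unique solution on $[0,T_1]$.

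The main technical obstacle is the compatibility of regularity scales across the three equations. The $\calC^{\al,\frac12\al}$ regularity of $u$ coming out of Theorem \ref{mainthm-Stokes} is only just enough to place $\tilde u\otimes\tilde u$ into $\calC^{\al,\frac12\al}$, so the convection has to enter the Stokes system through the $\mathcal F$-slot in divergence form; likewise, the $\calC^{\al+1,\frac12\al+\frac12}$ scale for $\theta$ is precisely what makes $\nabla\tilde\theta$ a bona fide $\calC^{\al,\frac12\al}$ object that can be multiplied by $\tilde u$. Verifying that each composition with $F$, $G$, $f$ lands in the exact function-space class required by the linear theory, and that the resulting time prefactors are enough to absorb the $(1+R)^l$ growth, is the only non-routine part of the argument.
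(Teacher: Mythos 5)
Your proposal is correct and takes essentially the same route as the paper: the paper runs an explicit Picard iteration (solve the frozen-coefficient linear heat/Neumann and Stokes problems at each step, rewrite the transport terms in divergence form, prove a uniform bound and then a Cauchy estimate via the smallness of the $T$-prefactors), which is your contraction-mapping argument written out by hand. The only cosmetic slip is that for the $\theta$-component you should invoke the $k=1$ case, estimate \eqref{heat-est-400-1} together with Remark \ref{rem-3}, rather than \eqref{CK-Aug6-50}; and when asserting that $G^m$ lands in $L^\infty(0,T;\calC_{D_\eta}(\overline\Om))$ one should note that an $\al$-H\"older function lies in $\calC_{D_\eta}$ only after possibly enlarging $\eta$ to $\max(\eta(r),r^\al)$, which remains Dini and still admits the given $u_0$.
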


\begin{remark}
The result of Theorem \ref{Theorem1} could be applicable to various
types of concrete equations involving fluid motions. For an specific
example, the Keller-Segel-Navier-Stokes equations, a mathematical
model describing the dynamics of a certain bacteria living in fluid
and consuming oxygen, can be considered. For such model we can
establish local well-posedness in the H\"older space as a
consequence of Theorem \ref{Theorem1} (see section 4 for more
details).
\end{remark}

This paper is organized as follows.
In Section 2, 
H\"older estimates of solutions for the heat equations are computed.
Section 3, 4 and 5 are devoted to providing the proofs of Theorem
\ref{mainthm-Stokes}, Theorem \ref{counter-exmaple} and Theorem
\ref{Theorem1}, respectively. Some technical lemmas are proved in
Appendix.


\section{Preliminaries}
\setcounter{equation}{0}



We first introduce the notation and present preparatory results that
are useful to our analysis. We start with the notation. Let $\Omega$
be an open domain in $\R^n$. 
The letter $c$ is used to represent a
generic constant, which may change from line to line, and
$c(*,\cdots,*)$ is considered a positive constant depending on
$*,\cdots,*$.  We introduce
a homogeneous H\"older space in $\Om$ with exponent $\alpha\in
(0,1)$, denoted by $\dot{\mathcal C}^{\al}(\overline{\Omega})$,
defined by
\[
\dot{\mathcal C}^{\al}(\overline{\Omega}):=\{f\in L^1(\Omega):
\norm{f}_{\dot{\mathcal
C}^{\al}(\overline{\Omega})}:=\sup_{x,y\in\overline{\Omega}}
\frac{|f (x) - f(y)|}{|x-y|^\al}<\infty\}.
\]
Usual H\"older space with exponent $\alpha\in (0,1)$, denoted by
${\mathcal C}^{\al}(\overline{\Omega})$, is specified as
\[
\calC^{\al} (\overline \Om):=\bket{f\in L^1(\Omega): \norm{f}_{{\mathcal
C}^{\al}(\overline{\Omega})}:=\norm{f}_{L^{\infty}(\Omega)}+\norm{f}_{\dot{\mathcal
C}^{\al}(\overline{\Omega})}<\infty}.
\]
Furthermore, we introduce following function classes
\[
\dot{\calC}^{\alpha}_{{{\mathcal
D}_{\eta}}}(\overline{\Omega})=\{f\in\dot{\calC}^{\alpha}(\overline{\Omega}):
\norm{f}_{\dot{\calC}^{\alpha}_{{{\mathcal
D}_{\eta}}}(\overline{\Omega})}:=\sup_{x,y\in
\overline{\Omega}}\frac{\abs{f(x)-f(y)}}{\abs{x-y}^{\alpha}\eta(\abs{x-y})}<\infty
\},
\]
where $\eta:\R^+\rightarrow\R^+$ is increasing Dini continuous,
namely $\int_0^1 \frac{\eta(r)}{r}dr<\infty $. Similarly, we define
$\calC^{\alpha}_{{{\mathcal D}_{\eta}}}(\overline{\Omega})$ by
\[
\calC^{\alpha}_{{{\mathcal
D}_{\eta}}}(\overline{\Omega})=\{f\in\calC^{\alpha}(\overline{\Omega}):\norm{f}_{\calC^{\alpha}_{{{\mathcal
D}_{\eta}}}(\overline{\Omega})}:=
\norm{f}_{\calC^{\alpha}(\overline \Omega)}+\norm{f}_{\dot{\calC}^{\alpha}_{{{\mathcal
D}_{\eta}}}(\overline{\Omega})}<\infty\}.
\]

In case of non-stationary function $f\in L^1(\Om\times (0,T))$, we
recall a seminorm of $f$, which is H\"older continuous with exponent
$\alpha\in (0,1)$ in spatial and temporal variable, denoted by
$\norm{f}_{\dot {\mathcal C}^{\al,\frac12 \al}(\overline{\Om} \times [0, T])}$,
indicated as follows:
\[
\norm{f}_{\dot {\mathcal C}^{\al,\frac12 \al}(\overline{\Om} \times [0, T])}:=
\|f\|_{L^\infty ((0, T); \dot {\mathcal C}^{\al}(\overline{\Om}))}  +
\|f\|_{L^\infty ( \Om; \dot {\mathcal C}^{\frac12 \al}([0, T])}
\]
\[
= \sup_t \sup_{x,y}  \frac{|f (x,t) - f(y,t)|}{|x-y|^\al} + \sup_x
\sup_{t,s} \frac{|f(x,t) - f(x,s)|}{|t-s|^{\frac12 \al}}.
\]
We also remind an H\"older space with exponent $\alpha\in (0,1)$ in
$\Omega\times (0,T)$, written as $\calC^{\al, \frac12 \al}
(\overline{\Om} \times [0, T])$, which is given by
\begin{align*}
\calC^{\al, \frac12 \al} (\overline{\Om} \times [0, T]):=\bket{f\in
L^1\,:\,\| f\|_{\calC^{\al,\frac12 \al}(\overline{\Om} \times [0, T])}:
= \| f\|_{L^\infty(\Om \times (0, T))} + \norm{f}_{\dot
\calC^{\al,\frac12 \al}(\overline{\Om} \times [0, T])}<\infty}.
\end{align*}

Let $\eta$ be a increasing Dini-function defined above. To treat
non-zero boundary data under our considerations, we also introduce a
function class $ \dot C_{{\mathcal D}_{\eta}} (\pa \Om;\dot
\calC^{\frac12 \al} [0, T])$ defined by
\begin{align*}
\dot \calC_{{\mathcal D}_{\eta}} (\pa \Om;\dot \calC^{\frac12 \al} [0, T]):
&= \bket{f \, | \, \sup_{P, Q \in
\pa \Om}\frac{\norm{f(P,\cdot)-f(Q,\cdot)}_{\dot{\mathcal
C}^{\frac{1}{2}\alpha}[0,T]}}{\eta(\abs{P-Q})} < \infty },\quad \al
\in (0, 1)
\end{align*}
equipped with the norm $\norm{f}_{\dot \calC_{{\mathcal D}_{\eta}} (\pa
\Om;\dot \calC^{\frac12 \al} [0, T])}:=\sup_{P, Q \in \pa
\Om}\frac{\norm{f(P,\cdot)-f(Q,\cdot)}_{\dot{\mathcal
C}^{\frac{1}{2}\alpha}[0,T]}}{\eta(\abs{P -Q})}$, which is
equivalently
as 
\[
\norm{f}_{\dot \calC_{{\mathcal D}_{\eta}} (\pa \Om;\dot \calC^{\frac12 \al}
[0, T])}
=\sup_{P,Q \in \pa \Om}\sup_{ s,t \in [0, T]} \frac{|f(P,t) -f(P,s)
- f(Q,t) + f(Q,s)|}{|t-s|^{\frac12 \al} \eta(|P-Q|)}.
\]
For our purpose, as a limiting case of $\alpha=0$, we introduce
\[
L^\infty(0, T; \dot \calC_{D_\eta}(\overline \Om)): = \{ f  \, | \,
\sup_{t} \sup_{x,y\in\bar\Om} \frac{|f(x,t) - f(y,t)|}{\eta(|x-y|)}
< \infty \}.
\]

We recall some estimates of heat equations in following lemmas.

\begin{lemma}\label{proheat1}
Let $\al\in (0,\infty)$, $0<T< \infty$ and $u_0:\R^n \rightarrow\R^n$ be
a vector field, which belongs to ${\mathcal C}^\alpha_{{\mathcal
D}_\eta}(\R^n )$. We set $W(x,t): = \int_{\R^n} \Ga(x-z,t) u_0(z)
dz$, where $\Gamma$ is the heat kernel. Then, $W\in \calC^{\alpha,
\frac{\al}2 }(\R^n\times [0, T])$ and $W$ satisfies
\begin{equation}\label{CK-Aug5-10}
\|W\|_{ \calC^{\al,\frac{\al}2  }(\R^n\times [0, T])}\leq
c\|u_0\|_{ \calC^{\alpha}(\R^n)}.
\end{equation}
Furthermore, if $\al \in (0, 1) $ and  $\Omega\subset\R^n$ be a bounded domain with smooth
boundary, then
\begin{equation}\label{CK-June17-310-1}
 \|  W \cdot {\bf n}\|_{\dot {\mathcal C}_{{\mathcal D}_\eta} (\pa \Omega; \dot {\mathcal C}^{\frac{\al}2}[0, T])}
 \leq c   \| u_0\|_{ \calC^{\al}_{{\mathcal D}_\eta} (\R^n)}.
\end{equation}

\end{lemma}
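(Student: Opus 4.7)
The plan is to prove the two estimates separately. For the first estimate \eqref{CK-Aug5-10}, start from the representation $W(x,t)=\int_{\R^n}\Gamma(z,t)u_0(x-z)\,dz$. The $L^\infty$ bound is immediate from $\int\Gamma(\cdot,t)\,dz=1$. Spatial H\"older continuity follows from the identity $W(x,t)-W(y,t)=\int\Gamma(z,t)[u_0(x-z)-u_0(y-z)]\,dz$ and the H\"older regularity of $u_0$. For the temporal H\"older bound use the cancellation $\int[\Gamma(z,t)-\Gamma(z,s)]\,dz=0$ to rewrite $W(x,t)-W(x,s)=\int[\Gamma(z,t)-\Gamma(z,s)][u_0(x-z)-u_0(x)]\,dz$, and then combine with the standard kernel estimate $\int|\Gamma(z,t)-\Gamma(z,s)|\,|z|^\alpha\,dz\le c|t-s|^{\alpha/2}$ obtained by writing $\Gamma(z,t)-\Gamma(z,s)=\int_s^t\partial_\tau\Gamma(z,\tau)\,d\tau$ and using the heat-kernel scaling $\int\Gamma(z,\tau)|z|^\beta\,dz\sim\tau^{\beta/2}$.

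For the second estimate \eqref{CK-June17-310-1}, set $h(P,t):=W(P,t)\cdot{\bf n}(P)$ and decompose the mixed difference as
\[
h(P,t)-h(P,s)-h(Q,t)+h(Q,s)=J_1+J_2,
\]
where $J_1=[W(P,t)-W(Q,t)-W(P,s)+W(Q,s)]\cdot{\bf n}(P)$ and $J_2=[W(Q,t)-W(Q,s)]\cdot[{\bf n}(P)-{\bf n}(Q)]$. For $J_2$, combine the temporal H\"older bound from Part 1 with the smoothness estimate $|{\bf n}(P)-{\bf n}(Q)|\le c|P-Q|$ to get $|J_2|\le c\|u_0\|_{\dot{\mathcal C}^\alpha}|t-s|^{\alpha/2}|P-Q|$; the factor $|P-Q|$ is absorbed into $\eta(|P-Q|)$ using the natural property (satisfied by Dini moduli of interest on a bounded boundary) that $|P-Q|\le c\,\eta(|P-Q|)$.

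For $J_1$, introduce $u_0^*(x):=u_0(x+P-Q)-u_0(x)$ and the corresponding heat solution $V(x,\tau):=\int\Gamma(x-y,\tau)u_0^*(y)\,dy$, so that $J_1$ reduces (up to dotting with the unit vector ${\bf n}(P)$) to $V(Q,t)-V(Q,s)$. The two essential properties are $\|u_0^*\|_{L^\infty}\le M:=\|u_0\|_{\dot{\mathcal C}^\alpha_{{\mathcal D}_\eta}}|P-Q|^\alpha\eta(|P-Q|)$ and, by the triangle inequality on first-order differences, $\|u_0^*\|_{\dot{\mathcal C}^\alpha_{{\mathcal D}_\eta}}\le 2\|u_0\|_{\dot{\mathcal C}^\alpha_{{\mathcal D}_\eta}}$. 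Split into two cases: if $|P-Q|^2\le|t-s|$, the crude bound $|V(Q,t)-V(Q,s)|\le 2M$ combined with $|P-Q|^\alpha\le|t-s|^{\alpha/2}$ immediately yields the desired estimate. If $|P-Q|^2>|t-s|$, use $\int[\Gamma(z,t)-\Gamma(z,s)]\,dz=0$ to write
\[
V(Q,t)-V(Q,s)=\int[\Gamma(z,t)-\Gamma(z,s)]\,[u_0^*(Q-z)-u_0^*(Q)]\,dz,
\]
apply the Dini-H\"older bound $|u_0^*(Q-z)-u_0^*(Q)|\le 2\|u_0\|_{\dot{\mathcal C}^\alpha_{{\mathcal D}_\eta}}|z|^\alpha\eta(|z|)$, and reduce to the weighted heat-kernel estimate
\[
\int|\Gamma(z,t)-\Gamma(z,s)|\,|z|^\alpha\eta(|z|)\,dz\le c|t-s|^{\alpha/2}\eta(\sqrt{|t-s|}).
\]
Since $\sqrt{|t-s|}\le|P-Q|$ in this regime and $\eta$ is increasing, $\eta(\sqrt{|t-s|})\le\eta(|P-Q|)$, which yields the required factor.

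The main obstacle is the weighted heat-kernel estimate just above: one writes $\Gamma(z,t)-\Gamma(z,s)=\int_s^t\partial_\tau\Gamma(z,\tau)\,d\tau$ and bounds $\int|\partial_\tau\Gamma(z,\tau)|\,|z|^\alpha\eta(|z|)\,dz\le c\tau^{\alpha/2-1}\eta(\sqrt{\tau})$ via the scaling $z=\sqrt{\tau}\,w$ together with a doubling-type inequality $\eta(\sqrt{\tau}|w|)\lesssim(1+|w|)^K\eta(\sqrt{\tau})$; integrating in $\tau$ then produces the required factor $|t-s|^{\alpha/2}\eta(\sqrt{|t-s|})$. This is the place where the Dini structure of $\eta$ and the heat-kernel scaling interact most delicately, and is the one technical step that goes beyond the standard computations used in the first estimate.
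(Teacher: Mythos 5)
Your treatment of the first estimate and the decomposition of the mixed difference into $J_1$ and $J_2$ is sound, and $J_2$ is handled exactly as in the paper. The issue is in the treatment of $J_1$, specifically the weighted heat-kernel estimate you flag as the ``main obstacle.'' The bound
\[
\int |\Gamma(z,t)-\Gamma(z,s)|\,|z|^\alpha\,\eta(|z|)\,dz \le c\,|t-s|^{\alpha/2}\,\eta(\sqrt{|t-s|})
\]
does \emph{not} hold for every increasing Dini modulus $\eta$, and the doubling-type inequality $\eta(\sqrt{\tau}|w|)\lesssim(1+|w|)^K\eta(\sqrt{\tau})$ you invoke is an additional hypothesis that is not part of the lemma. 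For instance $\eta(r)=e^{-1/r}$ is increasing and Dini ($\int_0^1 e^{-1/r}\,dr/r = \int_1^\infty e^{-u}\,du/u<\infty$), but $\eta(sr)/\eta(s)=\exp\bigl(\tfrac{1}{s}(1-\tfrac{1}{r})\bigr)\to\infty$ as $s\to0$ for any fixed $r>1$; a saddle-point computation shows that for this $\eta$ the left-hand side decays like $e^{-c\tau^{-1/3}}$ in $\tau=t-s$, while the right-hand side decays like $e^{-\tau^{-1/2}}$, so the estimate fails by an unbounded factor. So as written there is a genuine gap.

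The fix, which is what the paper actually does, is to not lose information when bounding $u_0^*(Q-z)-u_0^*(Q)$. You already have both ingredients in hand: $|u_0^*(Q-z)-u_0^*(Q)|\le 2\|u_0\|_{\dot\calC^\alpha_{D_\eta}}\,|z|^\alpha\eta(|z|)$ from the $\dot\calC^\alpha_{D_\eta}$-seminorm of $u_0^*$, and $|u_0^*(Q-z)-u_0^*(Q)|\le 2\|u_0^*\|_{L^\infty}\le 2\|u_0\|_{\dot\calC^\alpha_{D_\eta}}\,|P-Q|^\alpha\eta(|P-Q|)$. Taking the minimum of the two and observing that, since $\eta$ is increasing, $\min\bigl(|z|^\alpha\eta(|z|),\,|P-Q|^\alpha\eta(|P-Q|)\bigr)\le |z|^\alpha\eta(|P-Q|)$ \emph{for every} $z$ (if $|z|\le|P-Q|$ use $\eta(|z|)\le\eta(|P-Q|)$; if $|z|>|P-Q|$ use $|P-Q|^\alpha\le|z|^\alpha$), you get $|u_0^*(Q-z)-u_0^*(Q)|\le 2\|u_0\|_{\dot\calC^\alpha_{D_\eta}}\,|z|^\alpha\,\eta(|P-Q|)$ pointwise. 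The factor $\eta(|P-Q|)$ then comes out of the $z$-integral, and you only need the \emph{unweighted} kernel estimate $\int|\Gamma(z,t)-\Gamma(z,s)|\,|z|^\alpha\,dz\le c|t-s|^{\alpha/2}$, which you already proved in Part 1. This also makes your case split $|P-Q|^2\lessgtr|t-s|$ unnecessary. In short: your $J_1$/$J_2$ split, the introduction of $u_0^*$, and the appeal to the cancellation $\int(\Gamma(\cdot,t)-\Gamma(\cdot,s))=0$ are all fine and parallel the paper; the difference is that the paper bounds the second-order difference of $u_0$ pointwise by the interpolated $|z|^\alpha\eta(|P-Q|)$ so that the integral in $z$ is a standard one, whereas your route pushes the $\eta(|z|)$-weight into the kernel integral, which requires a doubling hypothesis that isn't available. (Both your proof and the paper's implicitly need $|P-Q|\lesssim\eta(|P-Q|)$ to absorb the $|P-Q|$ term from the normal-vector Lipschitz bound, which is another unstated restriction on $\eta$ — but that one is shared.)
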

\begin{proof}
Since the estimates in \eqref{CK-Aug5-10} are well-known, we omit
its details (see e.g. \cite{Lady}) and we just show the estimate
 \eqref{CK-June17-310-1}. Indeed, 
using $u_0 \in  {\mathcal
C}^\alpha_{{\mathcal D}_\eta}(\R^n )$, we compute for
$P,Q\in\partial\Om$
\[
W (P,t) \cdot{\bf n}(P) - W(P,s) \cdot{\bf n}(P) - W(Q,t) \cdot{\bf
n}(Q) + W(Q,s)\cdot{\bf n}(Q)
\]
\[
 = \int_{\R^n} \Big(\Ga(z,t) -\Ga(z,s) \Big) \Big(   u_0(P -z) \cdot{\bf n}(P) -   u_0(Q-z) \cdot {\bf n}(Q) \big) dz
\]
\[
= \int_{\R^n} \Big(\Ga(z,t) -\Ga(z,s) \Big) \Big(   u_0(P -z)
\cdot{\bf n}(P) -   u_0(P ) \cdot{\bf n}(P) -   u_0(Q-z) \cdot {\bf
n}(Q) +   u_0(Q) \cdot{\bf n}(Q) \Big) dz.
\]
For the second equality, we used $\int_{\R^n}(\Ga(z,t) -
\Ga(z,s) \big) dz =0 $ for all $0< s, \,t$.  We note that
\begin{align*}
& |u_0(P -z) \cdot{\bf n}(P) -   u_0(P ) \cdot{\bf n}(P) -   u_0(Q-z) \cdot {\bf
n}(Q) +   u_0(Q) \cdot{\bf n}(Q)|\\
& =|\big(  u_0(P -z) - u_0(P) \big) \cdot \big({\bf n}(P) - {\bf n}(Q) \big)  -  \big(  u_0(P-z) - u_0 (P) - u_0(Q-z) + u_0 (Q) \big)  \cdot {\bf
n}(Q)\\
& \leq c\Big(\| u_0\|_{\dot c^\al(\R^n)}  | P -Q| + \| u_0\|_{\dot \calC_{D_\eta}(\R^n)} \eta(|P - Q|) \Big) |z|^\al .
\end{align*}
Hence, for $s < t$, we have
\[ |W (P,t) \cdot{\bf n}(P) - W(P,s) \cdot{\bf n}(P) - W(Q,t) \cdot{\bf
n}(Q) + W(Q,s)\cdot{\bf n}(Q) |
\]
\[
\leq \|   u_0\|_{\calC^{\al}_{{\mathcal D}_{\eta}} (\R^n )}  \int_{\R^n}
|\Ga(z,t) -\Ga(z,s) | |z|^\al dz \big(  \eta(|P-Q|) + |P -Q| \big)
\]
\[
\leq   \|   u_0\|_{\calC^{\al}_{{\mathcal D}_{\eta}} (\R^n )}  \int_{\R^n}
\int_s^t |D_\tau\Ga(z,\tau)| d\tau|z|^\al   dz \big(  \eta(|P-Q|) + |P -Q| \big)
\]
\[
\leq   c\|   u_0\|_{\calC^{\al}_{{\mathcal D}_{\eta}} (\R^n )} \int_s^t
\tau^{\frac12 \al -1}   d\tau \big(  \eta(|P-Q|) + |P -Q| \big)
\]
\[  \leq   c\|
u_0\|_{\calC^{\al}_{{\mathcal D}_{\eta}}  (\R^n )} (t-s)^{ \frac12 \al}
\big(  \eta(|P-Q|) + |P -Q| \big).
\]
This completes the proof.
\end{proof}

%
%

For notational convention, we denote for a measurable function $f$
in $\R^n\times {\mathbb R}$
\begin{equation}\label{CK-Aug19-20}
\Lambda_0(f)(x,t) :=\int^t_{0} \int_{\R^n}
\Gamma(x-y,t-s)f(y,s)dyds,
\end{equation}
where $\Gamma$ is the heat kernel.

Next, we also present estimates of heat equation with external
force with zero initial. It may be probably well-known to experts,
we present its details in the Appendix for reader's convenience.

\begin{lemma}\label{lemm4}
Let $T>0$, $0 < \al <1$, $f \in   \calC^{\al, \frac12 \al}(\R^n \times
[0, T])$ and $\Lambda_0(f)$ be defined in \eqref{CK-Aug19-20}. Then,
\begin{align}\label{0814infty}
\|\Lambda_0(f)\|_{\dot \calC^{\al, \frac12 \al}(\R^n \times [0, T])}
\leq c T^{1-\frac12 \al} \| f\|_{L^\infty(\R^n \times (0, T))},
\end{align}
%
\begin{align}\label{al+0}
\|\Lambda_0(f)\|_{\dot {\mathcal C}^{\al +1,  \frac12 \al +\frac12}(\R^n \times
[0, T]) } \leq c \max\bke{ T^\frac12, T^{\frac12 -\frac12 \al}}\|
f\|_{L^\infty (0, T; {\mathcal C}^\al(\R^n ))}.
\end{align}
\begin{align}\label{al+2-2}
\| \na \Lambda_0(f)\|_{\dot {\mathcal C}^{\al+1, \frac12 \al +\frac12 } (\R^n
\times [0, T])} \leq c \| f\|_{ L^\infty(0, T;  \dot {\mathcal C}^{\al}(\R^n)) },
\end{align}
\begin{align}\label{al+0-2}
\|\na \Lambda_0(f)\|_{\dot {\mathcal C}^{\al ,  \frac12 \al }(\R^n \times [0,
T]) } \leq c T^\frac12 \| f\|_{ L^\infty (0, T; \dot {\mathcal C}^{\al}(\R^n ))},
\end{align}
\begin{align}\label{al+4}
\|\na\Lambda_0(f) \|_{\dot {\mathcal C}^{\al  +\ep,  \frac12 \al  +\frac12
\ep}(\R^n \times [0, T]) } \leq c  T^{\frac12-\frac12 \ep}\| f\|_{L^\infty(0, T; \dot {\mathcal C}^{\al }(\R^n ))}, \qquad 0 < \ep <1.
\end{align}
\end{lemma}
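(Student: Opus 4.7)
The plan is to prove all five estimates by a common parabolic splitting scheme, based on the heat kernel identities
\begin{equation*}
\int_{\R^n}\Gamma(z,\tau)\,dz=1,\qquad \int_{\R^n}\nabla\Gamma(z,\tau)\,dz=0,\qquad \int_{\R^n}|\nabla^j\partial_\tau^k\Gamma(z,\tau)|\,dz\le c\tau^{-j/2-k}.
\end{equation*}
Throughout I let $r:=|x-y|$ denote a spatial difference, $\tau:=|t-s|$ a temporal difference, and split the $s$-integral at the parabolic cutoff $t-r^2$. The vanishing-mean identity in the middle is the crucial cancellation that allows $f(x,s)$ to be subtracted inside the integrand in the derivative estimates.

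For \eqref{0814infty}, fix $x,y\in\R^n$ and $t\in[0,T]$. When $r^2\le t$, decompose $\Lambda_0(f)(x,t)-\Lambda_0(f)(y,t)$ into the integral over $s\in(t-r^2,t)$ and the integral over $s\in(0,t-r^2)$. The near part is bounded by $2r^2\|f\|_\infty$ via $\int\Gamma=1$, while the far part is bounded by writing $\Gamma(x-z,t-s)-\Gamma(y-z,t-s)=(x-y)\cdot\int_0^1\nabla\Gamma(\theta x+(1-\theta)y-z,t-s)\,d\theta$, which gives $cr\int_0^{t-r^2}(t-s)^{-1/2}\,ds\cdot\|f\|_\infty\le cr^2\|f\|_\infty$. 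In the regime $r^2>t$, the triangle inequality yields the trivial $2t\|f\|_\infty$. Interpolating via $\min(r^2,t)\le r^\alpha T^{1-\alpha/2}$ gives the spatial Hölder seminorm bound; the temporal seminorm is handled analogously by splitting the $s$-integral into $(0,s)$ and $(s,t)$ and using $\int|\partial_\tau\Gamma|\,dz\le c\tau^{-1}$.

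The gradient estimates \eqref{al+2-2}, \eqref{al+0-2}, \eqref{al+4} rest on the identity
\begin{equation*}
\nabla\Lambda_0(f)(x,t)=\int_0^t\!\int_{\R^n}\nabla_x\Gamma(x-z,t-s)\,\big(f(z,s)-f(x,s)\big)\,dz\,ds,
\end{equation*}
which uses the cancellation $\int\nabla\Gamma\,dz=0$; the H\"older continuity of $f$ supplies the extra weight $|x-z|^\alpha$ in the integrand. Differences of $\nabla\Lambda_0(f)$ in $x$ or $t$ are then estimated by the same split-at-$r^2$ scheme, with the weighted kernel bound $\int|x-z|^\alpha|\nabla^j\partial_\tau^k\Gamma|\,dz\le c\tau^{\alpha/2-j/2-k}$. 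The residual $T^{1/2-\epsilon/2}$, $T^{1/2}$, or dimensionless prefactors arise from integrating the leftover $\tau$-power in $s$: in \eqref{al+2-2} the $\alpha$-gain cancels the extra derivative exactly, leaving an integrable singularity with no $T$-factor, while \eqref{al+0-2} and \eqref{al+4} produce the advertised $T$-factor from a leftover $\tau^{-1/2}$ or $\tau^{-1/2-\epsilon/2+\alpha/2}$.

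For the second-order estimate \eqref{al+0}, one reduces to controlling $\nabla^2\Lambda_0(f)$ and $\partial_t\Lambda_0(f)$ in $\dot\calC^{\alpha,\alpha/2}$, applies the same cancellation to $\nabla^2\Gamma$, and splits the time integral at $t-r^2$. The $\max(T^{1/2},T^{1/2-\alpha/2})$ prefactor reflects that the temporal seminorm combines contributions from both short and long time windows, each producing a different power of $T$ after the elementary $\int_0^T\tau^{-\beta}\,d\tau$ computation. The main obstacle throughout is the bookkeeping: for each seminorm one must verify that every extra derivative of $\Gamma$ is matched by a H\"older gain $|x-z|^\alpha$ from $f$, and that the resulting $s$-integrals converge at the endpoints with the correct power of $T$. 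Once the parabolic split between $r^2\le t-s$ and $r^2>t-s$ is carried out correctly, each estimate reduces to an elementary integration in $s$.
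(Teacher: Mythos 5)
Your potential-theoretic splitting scheme is sound and would yield all five estimates, but it takes a genuinely different route from the paper on the critical inequalities \eqref{al+2-2} and \eqref{al+4}. The paper proves those two by Littlewood--Paley decomposition: it writes $\na\La_0 f$ in Fourier, isolates the multiplier $\xi\,\Psi_j(\xi)\,e^{-(t-\tau)|\xi|^2}$, and invokes Lemma~\ref{multiplier2} (a Mikhlin-type $L^\infty$-multiplier bound with exponential decay in $t\,2^{2j}$) to obtain the Besov characterization $\sup_j 2^{(\alpha+1)j}\|\na\La_0 f*\psi_j\|_{L^\infty}$. Your approach instead exploits the real-space cancellation $\int\na\Gamma\,dz=0$ to insert $f(z,s)-f(x,s)$ and then splits the $s$-integral at $t-r^2$. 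Both are classical and both work; yours is more elementary (it avoids multiplier theory and the auxiliary Lemma~\ref{multiplier2} entirely), while the paper's Fourier route makes the $T$-independence of \eqref{al+2-2} and the $T^{1/2-\ep/2}$ gain in \eqref{al+4} fall out immediately from $\int_0^{2^{2j}T}e^{-\tau/8}\,d\tau$. For \eqref{al+0} and \eqref{al+0-2} the paper does not use your parabolic cutoff either: it simply estimates the spatial seminorm by $\int_0^t(t-\tau)^{-1/2}\,d\tau\le cT^{1/2}$ after putting the $|x-y|^\alpha$ on $f$, and splits the time integral at $s$ (not at $t-r^2$) with a mean-value argument for the far piece. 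Your split-at-$t-r^2$ variant gives the same conclusions.

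Two small points worth correcting. First, in your far-field bound for \eqref{0814infty} you assert $r\int_0^{t-r^2}(t-s)^{-1/2}\,ds\le cr^2$; in fact $\int_0^{t-r^2}(t-s)^{-1/2}\,ds=2(t^{1/2}-r)$, so the far contribution is $\le c\,r\,t^{1/2}$, not $cr^2$. The final estimate survives, since for $r^2\le t$ one has $r\,t^{1/2}\le r^\alpha t^{1-\alpha/2}\le r^\alpha T^{1-\alpha/2}$, but the claimed $\min(r^2,t)$ bound as stated is wrong. Second, when you say \eqref{al+0} ``reduces to controlling $\nabla^2\Lambda_0(f)$ and $\partial_t\Lambda_0(f)$ in $\dot\calC^{\alpha,\alpha/2}$,'' this overshoots: the norm $\dot\calC^{\alpha+1,\alpha/2+1/2}$ requires only that $\na\La_0 f$ be $\dot\calC^\alpha$ in space and $\La_0 f$ be $\dot\calC^{(\alpha+1)/2}$ in time; since $(\alpha+1)/2<1$ the temporal seminorm is a direct Hölder quotient and you should not pass to $\partial_t\Lambda_0(f)$ (which you do not control directly from the assumptions).
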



Lastly, we consider the initial-boundary value problem of heat
equation \eqref{CK-Aug6-30}-\eqref{CK-Aug6-40}.
Here we assume that $v_0\in \calC^{\al+k}(\overline{\Omega} )$, $
\psi\in\calC^{\al+k, \frac12 (\al+k)}( \partial\Om \times [0, T])$,
$f\in \calC^{\al, \frac12 \al}(\overline{\Omega} \times [0, T])$ and
${\mathcal F}\in \calC^{\al, \frac12 \al}(\overline{\Omega}\times [0, T])$, where
$k$ is either $0$ or $1$. We let $\tilde f, \tilde {\mathcal F} \in \calC^{\al,
\frac12 \al}(\R^n \times [0, T])$ an extension of $f, {\mathcal F}$,
respectively, such that $\| \tilde f \|_{\calC^{\al, \frac12 \al}(\R^n
\times [0, T]) } \leq c \| f \|_{\calC^{\al, \frac12 \al}(\overline{\Om}
\times [0, T])}$ and $\| \tilde {\mathcal F}\|_{\calC^{\al, \frac12 \al}(\R^n
\times [0, T]) } \leq c \| {\mathcal F} \|_{\calC^{\al, \frac12 \al}(\overline{\Om}
\times [0, T])}$. Similarly, we denote by $\tilde v_0$ the extension
of $v_0$ such that $\| \tilde v_0 \|_{\calC^{\al+k, \frac12
(\al+k)}(\R^n) } \leq c \| v_0 \|_{\calC^{\al+k, \frac12
(\al+k)}(\overline{\Om})}$.

\begin{theorem}\label{heat-domain-100}
Let $\Omega$ be an bounded domain with $\calC^2$ boundary. Suppose
that $f, {\mathcal F}\in \calC^{\al, \frac12 \al}(\overline{\Om}
\times [0, T])$, $\psi\in\calC^{\al+k, \frac12 (\al+k)}( \partial\Om
\times [0, T])$ and $v_0\in\calC^{\al+k}(\overline{\Om})$ with $\psi|_{t =0} = v_0|_{\pa \Om}$, where
$k=0$ or $k=1$. Then, there exists a unique solution
$v\in\calC^{\al+k, \frac12 (\al+k)}(\overline{\Omega}\times [0, T])$
of \eqref{CK-Aug6-30}-\eqref{CK-Aug6-40} and $v$ satisfies
\begin{equation*}
\| v\|_{\calC^{\al, \frac12 \al }(\overline{\Omega} \times [0, T])}
\leq c \big(\| v_0\|_{\calC^{\al}(\overline{\Omega} )}+\|
\psi\|_{\calC^{\al, \frac12 \al}( \partial\Om \times [0, T])  }
\end{equation*}
\begin{equation}\label{CK-Aug20-500}
+T^{1-\frac12 \al} \| f\|_{L^\infty(\Om \times (0, T))}+
T^{\frac12} \| {\mathcal F}\|_{\calC^{\al, \frac12 \al }(\overline{\Omega}\times
[0, T])} \big),
\end{equation}
\begin{equation*}
\| v\|_{\calC^{\al+1, \frac12 (\al+1)}(\overline{\Omega} \times [0,
T])} \leq c  \big( \| v_0\|_{\calC^{\al+1}(\overline{\Omega} )}+\|
\psi\|_{\calC^{\al+1, \frac12 (\al+1)}( \partial\Om \times [0, T])  }
\end{equation*}
\begin{equation}\label{heat-est-400-1}
+\max\bke{ T^{\frac12}, T^{(\frac12 -\frac12 \al)}} \| f\|_{\calC^{\al,
\frac12 \al  }(\overline{\Omega} \times [0, T])}+  \| {\mathcal F}\|_{\calC^{\al,
\frac12 \al }(\overline{\Omega}\times [0, T])} \big).
\end{equation}

\end{theorem}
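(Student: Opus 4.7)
The plan is to prove Theorem \ref{heat-domain-100} by a standard splitting of $v$ into three pieces, each handled by a result already available: whole-space Cauchy problem estimates (Lemma \ref{proheat1} and Lemma \ref{lemm4}), and a classical boundary-value estimate for the heat equation in a bounded $C^2$ domain with zero initial data and zero interior forcing.

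Concretely, using the whole-space extensions $\tilde v_0, \tilde f, \tilde{\mathcal F}$ described just before the theorem, I would set
\begin{equation*}
v_1(x,t) := \int_{\R^n}\Gamma(x-y,t)\,\tilde v_0(y)\,dy, \qquad
v_2(x,t) := \Lambda_0(\tilde f)(x,t) + \nabla\cdot \Lambda_0(\tilde{\mathcal F})(x,t),
\end{equation*}
and then define $v_3$ as the solution in $\Omega$ of the heat equation with $f={\mathcal F}=0$, zero initial data, and boundary data $\psi_\ast:=\psi - (v_1+v_2)\big|_{\partial\Omega\times[0,T]}$. Uniqueness is by the standard $L^2$ energy identity, so it suffices to produce $v=v_1+v_2+v_3$ with the claimed norms. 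Note that the compatibility $\psi|_{t=0}=v_0|_{\partial\Omega}$ together with $v_1(x,0)=v_0(x)$ and $v_2(x,0)=0$ gives $\psi_\ast|_{t=0}=0$ on $\partial\Omega$, which is exactly the compatibility needed for the homogeneous-interior boundary problem defining $v_3$.

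The estimates for $v_1$ are immediate from \eqref{CK-Aug5-10} (restricted to $\Omega$), giving $\|v_1\|_{\calC^{\al+k,\frac12(\al+k)}(\overline\Omega\times[0,T])} \le c\|v_0\|_{\calC^{\al+k}(\overline\Omega)}$. The estimates for $v_2$ follow directly from Lemma \ref{lemm4}: for $k=0$ I would combine \eqref{0814infty} applied to $\tilde f$ with \eqref{al+0-2} applied to $\tilde{\mathcal F}$, using also $v_2(x,0)=0$ and $\|\Lambda_0(\tilde f)\|_{L^\infty}\le T\|\tilde f\|_{L^\infty}$ plus $\|\nabla\Lambda_0(\tilde{\mathcal F})\|_{L^\infty}\le c T^{\frac12}\|\tilde{\mathcal F}\|_{L^\infty(0,T;\dot\calC^\al)}$ (gained for free from the seminorm by interpolation with the vanishing-at-$t=0$ property); for $k=1$ I would instead use \eqref{al+0} for $\tilde f$ and \eqref{al+2-2} for $\tilde{\mathcal F}$. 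In both cases the $T$-powers produced match exactly those appearing in \eqref{CK-Aug20-500} and \eqref{heat-est-400-1}.

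The main obstacle, and where all the work is really concentrated, is the boundary estimate
\begin{equation*}
\|v_3\|_{\calC^{\al+k,\frac12(\al+k)}(\overline\Omega\times[0,T])} \le c\,\|\psi_\ast\|_{\calC^{\al+k,\frac12(\al+k)}(\partial\Omega\times[0,T])},
\end{equation*}
which is the classical Ladyzhenskaya--Solonnikov--Uraltseva H\"older estimate for the Dirichlet problem of the heat equation in a bounded $C^2$ (resp.\ $C^{2+\al}$ for $k=1$) domain with vanishing interior data and compatible boundary data; I would either invoke this estimate directly from the literature, or, if one wants a self-contained argument, sketch the usual flattening-of-the-boundary plus partition-of-unity reduction to the half-space case, where the single-layer representation of $v_3$ is controlled in H\"older norms by Solonnikov's kernel estimates. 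The trace $\psi_\ast$ lies in $\calC^{\al+k,\frac12(\al+k)}(\partial\Omega\times[0,T])$ with norm bounded by the sum of $\|\psi\|$ and the $\calC^{\al+k,\frac12(\al+k)}$ norms of $v_1$ and $v_2$ already estimated above. Adding the three pieces yields the required bounds \eqref{CK-Aug20-500} and \eqref{heat-est-400-1}.
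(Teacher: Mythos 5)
Your proposal is correct and follows essentially the same route as the paper's own proof: write $v$ as the sum of a whole-space Cauchy piece from $\tilde v_0$, a Duhamel piece from $\tilde f$ and $\tilde{\mathcal F}$ (estimated by Lemmas \ref{proheat1} and \ref{lemm4}), and a remainder solving the homogeneous heat equation in $\Omega$ with corrected boundary data, to which the standard Ladyzhenskaya--Solonnikov--Uraltseva H\"older estimate is applied. The only cosmetic differences are that you treat both $k=0$ and $k=1$ explicitly (the paper proves $k=1$ and says $k=0$ is similar) and that you spell out the compatibility of the corrected boundary data at $t=0$, which the paper leaves implicit.
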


\begin{proof}
We only prove the estimate \eqref{heat-est-400-1}, since
\eqref{CK-Aug20-500} can be computed similarly.

For convenience, we define
\begin{equation*}
w_0(x,t): = \int_{\R^n} \Ga(x-y,t) \tilde v_0(y) dy.
\end{equation*}
By lemma \ref{proheat1}, we have
\begin{equation}\label{BN-171}
\| w_0\|_{\calC^{\al +1 , \frac12 \al +\frac12}(\R^n \times [0, T])}
\leq c \| \tilde v_0\|_{\calC^{\al +1}(\R^n  )} \leq c \| v_0\|_{\calC^{\al +1}(\bar \Om  )}.
\end{equation}
Via  \eqref{al+0} and \eqref{al+2-2}, we also obtain
\begin{equation}\label{CK-aug20-600}
\|\Lambda_0(\tilde f)\|_{{\mathcal C}^{\al +1, \frac12 \al +\frac12 }({\mathbb R}^n
\times [0, T])} \leq c\max\bke{ T^{\frac12}, T^{(\frac12 -\frac12
\al)}} \| f\|_{{\mathcal C}^{\al, \frac12 \al }(\overline \Om \times [0, T])},
\end{equation}
\begin{equation}\label{CK-aug20-700}
\|D_x\Lambda_0(\tilde {\mathcal  F})\|_{{\mathcal C}^{\al +1,
\frac12 \al+\frac12 }(\R^n \times [0, T])} \leq c\| {\mathcal
F}\|_{{\mathcal C}^{\al, \frac12 \al }(\overline \Om \times [0,
T])}.
\end{equation}
Therefore, we note that
\[
w_0, \,\,\,\Lambda_0(\tilde f), \,\,\, D_x\Lambda_0(\tilde {\mathcal F}) \in
\calC^{\al+1 , \frac12 \al+\frac12  }(\pa \Om \times [0, T]).
\]
Next let $w_1$ be the solution of the following equation
\begin{align*}
& \hspace{45mm}\partial_t w_1 -\De w_1 = 0  \quad \mbox{in} \quad \Om \times [0, T],\\
&w_1(x,0) =0 \qquad \mbox{and}\qquad w_1 =\psi- w_0 -
\Lambda_0(\tilde f)+ D_x\Lambda_0(\tilde {\mathcal F} ) \quad
\mbox{on}\,\,\partial\Om\times [0,T].
\end{align*}
It is well-known  that
\begin{equation}\label{CK-aug20-800}
\|w_1\|_{\calC^{\al +1, \frac12 \al +\frac12 }(\overline{\Omega}
\times [0, T])} \leq c\| \psi- w_0 - \Lambda_0(\tilde f)+
D_x\Lambda_0(\tilde {\mathcal F})\|_{{\mathcal C}^{\al+1, \frac12 (\al+1)}(
\partial\Om \times [0, T])  }.
\end{equation}
With aid of \eqref{BN-171}, \eqref{CK-aug20-600} and
\eqref{CK-aug20-700}, the righthand side of \eqref{CK-aug20-800} can
be estimated as in \eqref{heat-est-400-1}. Noting that $v = w_0 +
w_1+\Lambda_0(\tilde f)- D_x\Lambda_0(\tilde{\mathcal F})$ is the solution of
\eqref{CK-Aug6-30}-\eqref{CK-Aug6-40} and thus, $v$ satisfies the estimate
\eqref{heat-est-400-1}.
This completes the proof.
\end{proof}

\begin{remark}\label{rem-3}
In case that the boundary condition in \eqref{CK-Aug6-40}, $v=\psi$ on
$\partial\Om$, is replaced by the Neumann condition $\frac{\partial
v}{\partial {\bf n}}=\psi$ on $\partial\Om$, if $
\psi\in\calC^{1+k+\al, \frac12 (1+k+\al)}( \partial\Om \times [0,
T])$ is assumed, then the same result of Theorem
\ref{heat-domain-100} can be obtained.
\end{remark}

\section{Proof of Theorem \ref{mainthm-Stokes}}
\setcounter{equation}{0}

In this section, we consider the boundary value problem of the
following Stokes system \eqref{CK-Aug6-10}-\eqref{CK-Aug6-20}. Let
$\Om$ be a $\calC^2$ bounded domain in $\R^n$. First we introduce
the notion of weak solutions for the Stokes system.
\begin{defn}\label{defn-Stokes}
Suppose that $ {\mathcal F}=\{F_{ij}\}_{i,j=1}^n\in
\calC^{\al,\frac{\al}{2}}(\overline \Om \times [0,T])$, $ f \in
L^\infty(\Om \times (0,T))$,  $g\in \calC^{\al,\frac{\al}{2}}(\pa
\Om\times [0,T])$ and $u_0 \in \calC^\al(\overline \Om)$. We say
that a vector field $u$ is a weak solution in the class
$\calC^{\al,\frac{\al}{2}}(\overline{\Om} \times [0,T])$ for the
Stokes system \eqref{CK-Aug6-10} with initial boundary condition
\eqref{CK-Aug6-20} if the following conditions are satisfied:

\begin{itemize}
\item[(i)] $u\in\calC^{\al,\frac{\al}{2}}(\overline{\Om} \times
[0,T])$ and $\nabla u\in L^\infty_{\rm loc}(\Om \times (0,T))$.

\item[(ii)]
For each $\Phi\in C^\infty_0(\Om \times (0,T))$ with $\mbox{\rm
div}_x\Phi=0$
\[
\int^T_0\int_{\Om}\nabla u:\nabla \Phi dxdt=\int^T_0\int_{\Om}u\cdot
\Phi_t + f \cdot \Phi -{\mathcal F}:\nabla \Phi dxdt\]

\item[(iii)]
$u(x,0)=u_0(x) $ in $\Om$.

\item[(iv)] $u(P,t)=g(P,t)$ in $ P \in \pa \Om \times (0,T)$.
\end{itemize}

\end{defn}

For $f$ and ${\mathcal F}$ given in Theorem
\ref{mainthm-Stokes}, we denote by $\tilde   f$ and
$\tilde {\mathcal F}$ the extension of $f$ and ${\mathcal F}$, respectively, to $\R^n
\times (0,T)$ such that $\tilde f $ and $\tilde {\mathcal F}$ have compact
supports.
%
%
Let ${\mathbb P}$ be  the Helmholtz projection operator  on $\R^n $ such that
\begin{align*}
[{\mathbb P}\tilde{f}]_j(x,t) & =\delta_{ij} \tilde f_i+\int_{\R^n}
D_{x_i}D_{x_j}N(x-y)\tilde{f}_i(y,t)dy=\delta_{ij}\tilde{f}_i+R_iR_j\tilde{f}_i, \\
[{\mathbb P}\,{\rm div}
\,\tilde{\mathcal F}]_j &  =D_{x_k}\Big(\delta_{ij}\tilde{F}_{ki}+R_iR_j\tilde{F}_{ki}\Big),
\end{align*}
where $R_i$ is Riesz transform in $\R$.

We define  $V^1$ and $V^2$ by
\begin{align*}
V^1_j(x,t)  & := \La_0([{\mathbb P}\tilde f]_j) (x,t)  = \La_0[ \delta_{ij}\tilde{f}_i+R_iR_j\tilde{f}_i] (x,t),\\   
V^2_j(x,t)     & := \La_0([{\mathbb P}\,{\rm div}
\,\tilde{\mathcal F}]_j ) (x,t) = - D_x\La_0 ( \delta_{ij}\tilde{F}_{ki}+R_iR_j\tilde{F}_{ki} )(x,t).  
 \end{align*}
We observe that $V^1$  and $V^2$  satisfy the equations
\[
 \begin{array}{l}\vspace{2mm}
V^1_t - \De V^1  ={\mathbb P}\tilde{f},\quad {\rm div} \, V^1=0
\qquad \mbox{ in }\,\,
 \R^n \times (0,T), \\
\hspace{20mm}V^1|_{t=0}= 0\qquad\mbox{ on }\,\,\R^n.
\end{array}
\]
\[
 \begin{array}{l}\vspace{2mm}
V^2_t - \De V^2  ={\mathbb P}\, {\rm div} \, \tilde{\mathcal F},\quad {\rm
div} \, \, V^2=0\qquad \mbox{ in }\,\,
 \R^n \times (0,T), \\
\hspace{20mm}V^2|_{t=0}= 0\qquad\mbox{ on }\,\,\R^n.
\end{array}
\]
Since support of $\tilde f$ is bounded, we obtain  $\|
R_iR_j \tilde f\|_{L^\infty(\R^n)} \leq c \| \tilde f\|_{
L^\infty(0, T;\dot  \calC_{D_\eta} (\R^n))}$. By \eqref{0814infty}, we have
\begin{align*}
  \| V^1\|_{\dot \calC^{\al, \frac12 \al  }(\R^n \times [0, T])} & \leq  c
T^{1 -\frac12 \al} \|
\tilde f + R_iR_j \tilde f\|_{L^\infty (\R^n \times (0, T))}\\
  & \leq  c   T^{1 -\frac12 \al} \big( \|   \tilde f \|_{L^\infty
(\R^n \times 0, T)} +   \|
\tilde f \|_{ L^\infty(0, T;\dot  \calC_{D_\eta} (\R^n))} \big) \\
& \leq  c   T^{1 -\frac12 \al}   \|
 \tilde f \|_{ L^\infty(0, T;   \calC_{D_\eta} (\R^n ))},\\
   \| V^1\|_{L^\infty(\R^n \times (0, T))} & \leq  c
T  \|
\tilde f + R_iR_j \tilde f\|_{L^\infty (\R^n \times (0, T))}\\
& \leq  c   T    \|
 \tilde f \|_{ L^\infty(0, T;   \calC_{D_\eta} (\R^n ))}
\end{align*}
Hence, we have
\begin{align}\label{CK-1107-1}
  \| V^1\|_{  \calC^{\al, \frac12 \al  }(\R^n \times [0, T])}
& \leq  c  \max( T, T^{1 -\frac12 \al} )   \|
 \tilde f \|_{ L^\infty(0, T;   \calC_{D_\eta} (\R^n ))}.
\end{align}
Moreover, we note that
\begin{align}\label{CK1105-2}
\notag &V^1(P,t)\cdot {\bf n}(P)  -V^1(P,s)\cdot {\bf n}(P) - V^1(Q,t)\cdot
{\bf n}(Q) + V^1(Q,s)\cdot {\bf n}(Q)\\
\notag &=\Big( V^1(P,t) - V^1(P,s) \Big) \cdot \Big({\bf n}(P) - {\bf n}(Q) \Big)
 + \Big(V^1(P,t) - V^1(P,s) - V^1(Q,t) + V^1(Q,s) \Big) \cdot {\bf n}(Q)\\
&: = I_1 + I_2.
\end{align}
From \eqref{0814infty}, we get
\begin{align}\label{CK1105-1}
 |I_1 | \leq  c\| V^1 \|_{L^\infty(\R^n ; \dot C^{\frac12 \al } [0,T])}|t-s|^{\frac12 \al} |P -Q| \leq  c   T^{1 -\frac12 \al}   \|
 \tilde f \|_{ L^\infty(0, T;   \calC_{D_\eta} (\R^n ))} |t-s|^{\frac12 \al} |P -Q|.
\end{align}
And
\begin{align*}
I_2 & =\int_s^t \int_{\R^n}  \big(\Ga(P-z, t-\tau) - \Ga(Q-z, t -\tau ) \big) {\mathbb P}[ \tilde f](z,\tau) dzd\tau  \\
&\quad  + \int_0^s \int_{\R^n} \big(\Ga(P-z, t-\tau) - \Ga(P-z, s -\tau ) - \Ga(Q-z, t-\tau) + \Ga(Q-z, s -\tau )  \big) {\mathbb P}[\tilde f ](z,\tau) dzd\tau.
\end{align*}
The first term is
\begin{align*}
& |\int_s^t \int_{\R^n} \int_0^1 D_z    \Ga( \te P + (1 -\te) Q -z, t-\tau) \cdot (P -Q) d\te  {\mathbb P}[ \tilde f ](z,\tau)   dzd\tau|\\
&\leq c| P -Q| \| \tilde f\|_{ L^\infty(0, T;   \calC_{D_\eta} (\R^n ))}  \int_s^t  (t -\tau)^{-\frac12 }       dzd\tau \\
&\leq c\| \tilde f\|_{ L^\infty(0, T;   \calC_{D_\eta} (\R^n ))} |P -Q| (t -s)^\frac12.
\end{align*}
The second term is
\begin{align*}
&| \int_0^s \int_{\R^n} \int_0^1 \int_0^1 D_\te D_\ga \Ga( \te P + (1 -\te) Q-z, \la t+ (1 -\la) s -\tau)  \tilde f(z,\tau) d\te d\la dzd\tau |\\
&\leq  c\|\tilde  f\|_{ L^\infty(0, T;   \calC_{D_\eta} (\R^n ))} |P -Q| (t -s)    \int_0^1 \int_0^s    (  \la t+ (1 -\la) s -\tau )^{-\frac32}  d\tau   d\la\\
&\leq  c\| \tilde f\|_{ L^\infty(0, T;   \calC_{D_\eta} (\R^n ))} |P -Q| (t -s)   \int_0^1     (  \la t+ (1 -\la) s  )^{-\frac12} -  \la^{-\frac12}( t -s)^{-\frac12}   d\tau   d\la\\
&  = c\| \tilde f\|_{ L^\infty(0, T;   \calC_{D_\eta} (\R^n ))} |P -Q| (t -s)   \big(  (t-s)^{-1}(t^\frac12 -s^\frac12) - (t -s)^{-\frac12} \big)\\
&\leq  c\| \tilde f\|_{ L^\infty(0, T;   \calC_{D_\eta} (\R^n ))} |P -Q| (t -s)^\frac12.
\end{align*}
Hence, we have
\begin{align}\label{CK1105-3}
I_2 \leq  \| \tilde f\|_{ L^\infty(0, T;   \calC_{D_\eta} (\R^n ))} |P -Q| (t -s)^\frac12.
\end{align}
By \eqref{CK1105-2}-\eqref{CK1105-3},  we have
\begin{align}\label{CK-september-001}
 \|  V^1 \cdot {\bf n}\|_{\dot {\mathcal C}_{{\mathcal D}_\eta} (\pa \Omega; \dot {\mathcal C}^{\frac{\al}2}[0,
T])} \leq  c   \max (T^{1 -\frac12 \al }, T^{\frac12 -\frac12 \al} )  \|
 f \|_{ L^\infty(0, T;  \mathcal C_{{\mathcal D}_\eta} (\overline{\Om} ))}.
\end{align}

Next, we estimate $V^2$. Since $R_i: \dot \calC^\al(\R^n) \ri \dot \calC^\al(\R^n)$ is bounded, by \eqref{al+0}, we have
\begin{align*}
\notag\|  V^2\|_{ \dot \calC^{\al, \frac12 \al  } (\R^n \times [0, T])}
&\leq c
\max ( T^{\frac12 }, T^{\frac12 -\frac12 \al}  ) \|{\mathbb P}[ \tilde{\mathcal F}] \|_{  L^\infty(0, T;  \dot\calC^{\al }(  \R^n )) }\\
& \leq c
\max ( T^{\frac12 }, T^{\frac12 -\frac12 \al}  ) \| \tilde {\mathcal F} \|_{  L^\infty(0, T;  \dot\calC^{\al }( \overline \R^n )) }.
\end{align*}

We estimate $\| V^2\|_{L^\infty(\R^n \times (0, T))}$.  It is well known that $D_x\Gamma_t\in {\mathcal H}^1(\R^n)$ with $\|D_x\Gamma_t\|_{ {\mathcal H}^1(\R^n)}\leq ct^{-\frac{1}{2}}$, where ${\mathcal H}^1(\R^n)$ denotes Hardy space.  Since $R_i : BMO(\R^n) \ri BMO(\R^n)$ is bounded,  we have
\begin{align*}
\| V^2\|_{L^\infty(\R^n \times (0, T))} & \leq \big(\| \tilde {\mathcal F} \|_{L^\infty(\R^n \times (0, T))}  \int_0^t |D_x\Ga(x-y,t) dy|
   + \int_0^t \| R_iR_j  \tilde {\mathcal F }(t)\|_{BMO(\R^n)} \|D_x\Gamma_t\|_{ {\mathcal H}^1(\R^n)} \big) \\
& \leq c    t^\frac12  \| \tilde {\mathcal F} \|_{L^\infty(\R^n \times (0, T))}.
\end{align*}
Hence, we have
\begin{align}\label{CK-september-001-1}
 \|  V^2\|_{   \calC^{\al, \frac12 \al  } (\R^n \times [0, T])}
& \leq c
\max ( T^{\frac12 }, T^{\frac12 -\frac12 \al}  ) \| {\mathcal F} \|_{  L^\infty(0, T;   \calC^{\al }( \overline \Om )) }.
\end{align}

Moreover,
\begin{align}\label{CK1105-4}
\notag&V^2(P,t)\cdot {\bf n}(P)  -V^2(P,s)\cdot {\bf n}(P) - V^2(Q,t)\cdot {\bf n}(Q) + V^2(Q,s)\cdot {\bf n}(Q)\\
\notag&=( V^2(P,t) - V^2(P,s) ) \cdot ({\bf n}(P) - {\bf n}(Q) )
 + (V^2(P,t) - V^2(P,s) - V^2(Q,t) + V^2(Q,s) ) \cdot {\bf n}(Q)\\
&: = II_1 + II_2.
\end{align}
By \eqref{al+0-2}, we get
\begin{align*}
II_1 \leq c \| V^2 \|_{L^\infty(\R^n ; \dot \calC^{\frac12 \al } [0,T])}|t-s|^{\frac12 \al} |P -Q| \leq  c   T^{\frac12 }   \|
 \tilde {\mathcal F} \|_{ L^\infty(0, T;  \dot \calC^\al  (\R^n ))} |t-s|^{\frac12 \al} |P -Q|
\end{align*}
and
\begin{align}\label{CK1105-5}
\notag &\int_s^t \int_{\R^n}   \na_z \Ga(z, t-\tau) \big({\mathbb P}[ \tilde {\mathcal F}] (P -z,\tau) -{\mathbb P}[ \tilde {\mathcal F}] (Q-z, \tau) \big) dzd\tau  \\
& \quad +\int_0^s \int_{\R^n} \big( \na_z\Ga(z, t-\tau) -  \na_z\Ga(z, s
-\tau )\big( {\mathbb P}[ \tilde{\mathcal F}](P -z,\tau) -{\mathbb P}[ \tilde {\mathcal F}] (Q -z, \tau) \big)
dzd\tau.
\end{align}
The first term is dominated by
\begin{align*}
  | P -Q|^\al \| {\mathbb P}[ \tilde{\mathcal F}]\|_{ L^\infty(0, T; \dot \calC^\al(\R^n))}  \int_s^t  (t -\tau)^{-\frac12  }       dzd\tau
&\leq c\| \tilde {\mathcal F}\|_{ L^\infty(0, T; \dot \calC^\al(\R^n))}  |P -Q|^\al (t -s)^{\frac12 }.
\end{align*}
The second term is
\begin{align*}
&\int_0^s \int_{\R^n} \int_0^1 \int_0^1  D_\ga \Ga(  z, \la t+ (1 -\la) s -\tau)  \big( {\mathbb P}[ \tilde {\mathcal F}](P -z,\tau) - {\mathbb P}[ \tilde {\mathcal F}](Q -z, \tau) \big)   d\la dzd\tau\\
&\leq  c\| \tilde{\mathcal F}\|_{L^\infty( 0, T; \dot \calC^\al (\R^n ))} |P -Q|^\al    \int_0^1 \int_0^s    (  \la t+ (1 -\la) s -\tau )^{-\frac32}  d\tau   d\la\\
&\leq  c\| \tilde {\mathcal F}\|_{L^\infty( 0, T; \dot \calC^\al (\R^n ))} |P -Q|^\al  (t -s)^\frac12.
\end{align*}
Hence, we have
\begin{align}\label{CK1105-6}
|II_2 | \leq c   \| \tilde {\mathcal F}\|_{L^\infty( 0, T; \dot \calC^\al (\R^n ))} |P -Q|^\al  (t -s)^\frac12.
\end{align}
From \eqref{CK1105-4}-\eqref{CK1105-6}, we have
\begin{align}\label{CK-september-002}
\|  V^2 \cdot {\bf n} \|_{\dot \calC_{D_{\eta}}(\pa \Om; \dot C^{\frac12 \al} [0, T])}  \leq c \max\bke{ T^{\frac12 }, T^{\frac12 -\frac12
\al }}  \|   {\mathcal F}\|_{L^\infty( 0, T; \dot \calC^\al (\overline \Om ))}.
\end{align}

Next we treat initial data $u_0$. Let $\widetilde{u}_0$ be an
extension of  $u_0$ satisfying that $
 \mbox{div} \, \widetilde{u}_0=0\mbox{ in }\R^n$.
Letting $v$ by
   \begin{equation*}
  v(x,t):=\int_{\R^n}\Gamma(x-y,t) \widetilde{u}_0(y)dy.
  \end{equation*}
We observe that $v$ satisfies the equations
 \[
\begin{array}{l}\vspace{2mm}
v_t - \De v  =0,\quad {\rm div} \,  v=0 \qquad \mbox{ in }\,\,
 \R^n \times (0,T),\\
\hspace{20mm}v|_{t=0}= \tilde{u}_0 \qquad\mbox{ on }\,\,\R^n.
\end{array}
\]
By Lemma \ref{proheat1}, we have
\begin{align*} 
\|  v \|_{  \calC^{\alpha,\frac{\al}2}( \R^n \times [0, T])} \leq c
 \| \tilde u_0\|_{  \calC^{\al}  (\R^n)} \leq c \|  u_0\|_{  \calC^{\al}  (\overline \Om)},\\
 \|  v \cdot {\bf n}\|_{\dot {\mathcal C}_{{\mathcal D}_\eta} (\pa \Omega; \dot {\mathcal C}^{\frac{\al}2}[0,
T])} \leq c   \| \tilde u_0\|_{\dot \calC^{\al}_{{\mathcal D}_\eta}
(\R^n)}\leq c   \|  u_0\|_{ \calC^{\al}_{{\mathcal D}_\eta} (\overline \Om)}.
\end{align*}
We denote $G$ as
\begin{align*}
G = \phi - V^1|_{\pa \Om \times (0, T) } - V^2|_{\pa \Om \times (0, T)
}- v|_{\pa \Om \times (0, T)}
\end{align*}
We note that $G|_{t =0} = 0$ if $\phi|_{t =0} = u_0$ on $\pa \Om$ and
also observe that $G$ satisfies
\begin{align*}
\| G\|_{{\mathcal C}^{\al, \frac12 \al}(\pa \Om \times (0, T))} &
\leq  c\big( \| \phi \|_{{\mathcal C}^{\al, \frac12 \al}(\pa \Om \times
[0, T])} +  \max(T, T^{1 -\frac12 \al} )  \|
 f \|_{ L^\infty(0, T;   \calC_{D_\eta} (\overline \Om ))}\\
 &\qquad \qquad  + \max\bke{ T^{\frac12 }, T^{\frac12 -\frac12
\al }}  \|   {\mathcal F}\|_{L^\infty( 0, T; \dot \calC^\al (\overline \Om ))}  + \| u_0\|_{{\mathcal C}^\al (\overline \Om)} \big),\\
 \|  G \cdot {\bf n}\|_{\dot {\mathcal C}_{{\mathcal D}_\eta} (\pa \Omega; \dot {\mathcal C}^{\frac{\al}2}[0,
T])} & \leq  c \big(  \|  \phi \cdot {\bf n}\|_{\dot {\mathcal
C}_{{\mathcal D}_\eta} (\pa \Omega; \dot {\mathcal
C}^{\frac{\al}2}[0, T])} +   \| u_0\|_{\dot C^{\al}_{{\mathcal
D}_\eta} (\overline \Om)}  +  \max (T^{1 -\frac12 \al }, T^{\frac12 -\frac12 \al} )   \|
 f \|_{ L^\infty(0, T;   \calC_{D_\eta} (\overline\Om ))} \big).
\end{align*}

We decompose the solution $u$ in \eqref{CK-Aug6-10}-\eqref{CK-Aug6-20} as the form of $u =
V^1+ V^2 + v + w$, where $w$ solves the following equations:
\begin{align}\label{maineq-2-2}
\left\{\begin{array}{l}\vspace{2mm} w_t - \De w + \na \pi =0, \qquad
\Om \times (0, T),\\ \vspace{2mm} {\rm div} \, w =0, \qquad \Om
\times (0, T),\\ \vspace{2mm} w|_{\pa \Om \times (0,T)} = G, \quad
w|_{t=0} =0.
\end{array}
\right.
\end{align}
Hence, solving the equations  \eqref{CK-Aug6-10}-\eqref{CK-Aug6-20} is reduced to treat the
equations \eqref{maineq-2}. For the estimate in Theorem
\ref{mainthm-Stokes}, 
it suffices to
obtain that
\begin{align}\label{CK-Aug15-200}
 \| w\|_{{\mathcal C}^{\al,\frac12 \al} (\Om \times [0, T])} \leq c
\big(  \|  G \cdot {\bf n}\|_{\dot {\mathcal C}_{{\mathcal D}_\eta}
(\pa \Omega; \dot {\mathcal C}^{\frac{\al}2}[0, T])} +  \|
G\|_{{\mathcal C}^{\al, \frac12 \al}(\pa \Om \times [0, T])} \big).
\end{align}

\subsection{Invertibility of boundary integral operators}

In this subsection, we will provide the estimate
\eqref{CK-Aug15-200}. Denoting $w$, $\pi$ and $G$ in
\eqref{maineq-2-2} by $u$, $q$ and $g$, respectively, we consider
\begin{align}\label{maineq-2}
\left\{\begin{array}{l}\vspace{2mm} u_t - \De u + \na q =0, \qquad
\Om \times (0, T),\\ \vspace{2mm} {\rm div} \, u  =0, \qquad \Om
\times (0, T),\\ \vspace{2mm} w|_{\pa \Om \times (0,T)} = g, \quad
u|_{t=0} =0.
\end{array}
\right.
\end{align}
Due the result of  Solonnikov (\cite{So}), the solution  of \eqref{maineq-2} can be written in the form
\begin{align}\label{form}
u (x,t) = {\mathcal U}[\Phi](x,t) + \na {\mathcal V}[\Psi](x,t),
\end{align}
where ${\mathcal V}$ is electrostatic potential of a single layer, i.e.,
\begin{equation}\label{potential-V}
{\mathcal V}[\Psi](x,t) = \int_{\pa \Om} N(x-Q) \Psi(Q,t) dQ,
\end{equation}
where $N$ is fundamental solution of Laplace equation. On the other
hand, ${\mathcal U}$ is referred as the hydrodynamical potential, which is
defined by
\begin{equation}\label{potential-U}
{\mathcal U}[\Phi](x,t) = \int_0^t \int_{\pa \Om} {\mathcal G}(x, Q, t-s)
\Phi(Q,s) dQds.
\end{equation}
Here ${\mathcal G}$ is the tensor given by
\begin{equation}\label{tensor-g-10}
{\mathcal G}(x,Q,t)= -2\frac{\pa \Ga(x-Q,t)}{\pa {\bf n}_Q} \big(I -
{\bf n}(Q)\otimes {\bf n}(Q) \big) +4 \big(\na_x - {\bf n}(Q) \frac{\pa }{\pa
{\bf n}}\big) q(x,Q,t),
\end{equation}
where ${\bf n}(Q)$ is unit outer normal vector at $Q \in \pa \Om$.
The corresponding pressure tensor is given as
\[
q(x,Q,t) = \int_{\prod (x,Q)} \frac{\pa \Ga (Z-Q,t)}{\pa {\bf n}}
\na N(x-Z) dZ,
\]
where $\prod (x,Q)$ is the layer between the tangent plane at $  Q
\in \pa \Om$ and the parallel plane passing through the point $x$ (see \cite[pp 115-117]{So}).


We recall some estimates of $\mathcal G=(G_{ij})_{i,j=1,2\cdots, n}$
(see section 3 in \cite{KS1}). Let $P, Q, Z\in\pa\Om$. We then have
for all $0 <
\lambda <1$ 
\begin{align}
|G_{ij} (P,Q, t)| & \leq c_\lambda \frac{ 1 }{t^{\frac{1 +\lambda}2} (|P-Q|^2 +  t)^{\frac{n -2\la }2} },\label{boundary-boundary1}\\
|G_{ij} (P,Z,t) - G_{ij}(Q,Z,t)| & \leq  c_\lambda \frac{
|P-Q|^\lambda  }{t^{\frac{1 +\lambda}2} (|P-Z|^2 + t)^{\frac{n -
\lambda}2} }, \quad \mbox{     if     } \,\,   |P-Q| \leq
\frac12|P-Z|.\label{boundary-boundary3}
\end{align}



%
%
%

Let $\Phi$ and $\Psi$ satisfy the following condition;
\begin{equation}\label{compatible-condition}
\Phi(P, t)  \cdot {\bf n}(P) =0, \qquad \int_{\pa \Om} \Psi(Q, t) dQ
=0.
\end{equation}

For convenience, for any vector field $h$ defined on $\pa\Om$ we
denote by $h_{tan}$ the tangential componential of $h$, i.e.
$h_{tan} = h - {\bf n}(h \cdot {\bf n})$. By the \eqref{form}, we
solve the following equations
\begin{align}\label{boundarysystem}
\notag \Phi  + U_{tan} [\Phi]  + \na_S V [\Psi] = g_{tan},\\
\Psi + K^* [\Psi] + {\bf n}\cdot U
[ \Phi] = g \cdot {\bf n}
\end{align}
(see \cite[pp 120, (2.26)]{So}).
Here,  $V[\Psi]$ and $U[\Phi]$  are the direct values of \eqref{potential-V} and \eqref{potential-U}   on
$\pa \Om$, respectively, i.e.
\begin{align}\label{CK1107-2}
U[\Phi](P,t) = \int_0^t \int_{\pa \Om} {\mathcal G}(P, Q,
t-s) \Phi(Q,s) dQds,\\
\label{CK1107-3}
V[\Psi](P,t) = \int_{\pa \Om} N(P-Q) \Psi(Q,t) dQ,\qquad P \in \pa \Om
\end{align}
and
\begin{align}\label{double-layer}
K^* \Psi (P,t) = p.v \,\, c_n\int_{\pa \Om} \frac{(P -Q)\cdot {\bf n}(P)}{|P-Q|^n} \Psi(Q,t) dQ.
\end{align}
In addition, 
$\na_S V$ indicates the tangential gradient of $V$ on $\pa \Om$,
namely $\na_S V = \na V - {\bf n} \frac{\pa V}{\pa {\bf n}}$.

\begin{lemma}\label{lemma508}
Let $P, \, Q  \in \pa \Om$ and $0 <\al < 1$.  There is
$\de=\delta(\al)$ with $0<\de $ such that the tensor
$\mathcal G$ given in \eqref{tensor-g-10} satisfies
\begin{align}\label{CK-August}
\int_0^t \int_{\pa \Om} |\mathcal G(P,Z,\tau) - \mathcal
G(Q,Z,\tau)| dZd\tau\leq ct^{\de} |P - Q|^\al,
\end{align}
\begin{equation}\label{CK-June20-200}
\int_s^t \int_{\pa \Om} |\mathcal G(P,Z,\tau) | dZd\tau  \leq
c(t-s)^{\de}, \qquad 0\leq s < t,\qquad P \in \pa \Om.
\end{equation}
\end{lemma}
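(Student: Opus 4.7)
The plan is to reduce both estimates to elementary one-dimensional integrals in the radial variable on $\pa \Om$, using the pointwise bounds \eqref{boundary-boundary1} and \eqref{boundary-boundary3} together with local $\calC^2$ parameterizations of $\pa \Om$. Throughout I rely on the scaling identity
\[
\int_0^\infty \frac{r^{n-2}}{(r^2+\tau)^{a}}\,dr = c_n\,\tau^{(n-1)/2-a} \int_0^\infty \frac{\rho^{n-2}}{(\rho^2+1)^a}\,d\rho,
\]
in which the $\rho$-integral converges precisely when $a>(n-1)/2$, and on the standard surface-measure comparison $\int_{\pa\Om}F(|P-Z|)\,dZ\leq c\int_0^{\mathrm{diam}\,\Om}F(r)\,r^{n-2}\,dr$ valid on any compact $\calC^2$ surface.

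For \eqref{CK-June20-200}, I would apply \eqref{boundary-boundary1} with the parameter $\la=\mu:=(1-\al)/2\in(0,1/2)$. Integration in $Z$ over $\pa\Om$ gives
\[
\int_{\pa\Om}|\mathcal{G}(P,Z,\tau)|\,dZ \leq c_\mu\,\tau^{-(1+\mu)/2}\,\tau^{\mu-1/2}=c_\mu\,\tau^{\mu/2-1},
\]
where $\mu<1/2$ is exactly what ensures the $\rho$-integral converges at infinity. Integrating in $\tau$ on $(s,t)$ and using the sub-additivity $t^{\mu/2}-s^{\mu/2}\leq(t-s)^{\mu/2}$ produces \eqref{CK-June20-200} with $\de=\mu/2=(1-\al)/4$.

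For \eqref{CK-August}, set $R:=|P-Q|$ and split $\pa\Om$ into the far region $\{|P-Z|>2R\}$ and the near region $\{|P-Z|\leq 2R\}$. On the far region the hypothesis $|P-Q|\leq\frac12|P-Z|$ of \eqref{boundary-boundary3} holds, so applying it with $\la=\ga:=(\al+1)/2\in(\al,1)$ and estimating $(|P-Z|^2+\tau)^{(n-\ga)/2}\geq|P-Z|^{n-\ga}$ yields, after the one-dimensional $r$-integral (whose convergence at infinity uses $\ga<1$) and the $\tau$-integral on $(0,t)$, the bound $c\,R^{2\ga-1}t^{(1-\ga)/2}=c\,R^\al\,t^{(1-\al)/4}$. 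On the near region I would use the triangle inequality $|\mathcal G(P,Z,\tau)-\mathcal G(Q,Z,\tau)|\leq|\mathcal G(P,Z,\tau)|+|\mathcal G(Q,Z,\tau)|$ together with \eqref{boundary-boundary1} at $\la=\ga=(\al+1)/2>1/2$; for the $Q$-term the elementary observation $|Q-Z|\leq 3R$ is used. Splitting the $\tau$-integration at $R^2$ and bounding the denominator below by $|P-Z|^{n-2\ga}$ when $\tau\leq R^2$ (the condition $\ga>1/2$ is precisely what makes the resulting $\int_0^{2R}r^{2\ga-2}\,dr$ converge at the origin) and by $\tau^{(n-2\ga)/2}$ when $\tau>R^2$, one obtains a near-region bound of $c\,R^{2\ga-1}t^{(1-\ga)/2}$ when $t\leq R^2$ and of $c\,R^\ga$ when $t>R^2$. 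The first case is already $c\,R^\al t^{(1-\al)/4}$; in the second, $R^\ga=R^{(\al+1)/2}=R^\al\cdot R^{(1-\al)/2}\leq R^\al\,t^{(1-\al)/4}$ using $R\leq\sqrt{t}$. Summing far and near contributions yields \eqref{CK-August} with the same $\de=(1-\al)/4$.

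The main obstacle is the simultaneous selection of the exponents in \eqref{boundary-boundary1} and \eqref{boundary-boundary3}: each has to lie in $(0,1)$ so that all spatial and temporal integrals converge, yet be large enough for the resulting powers of $R$ and $\tau$ to combine into the target $R^\al t^{(1-\al)/4}$. The balanced choice $\ga=(\al+1)/2$ is what avoids the logarithmic time-divergence that would appear if one tried to use the sharper but $\tau$-dependent spatial bound $\int_{|P-Z|>2R}dZ/(|P-Z|^2+\tau)^{(n-\ga)/2}\leq c\,\tau^{(\ga-1)/2}$, and simultaneously it is what makes the $r$-integral in the near region converge at $r=0$. The translation of surface integrals over $\pa\Om$ into Euclidean integrals via a finite cover by $\calC^2$ charts is routine in view of the compactness of $\pa\Om$.
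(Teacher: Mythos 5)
Your argument is correct, and it follows the same near/far decomposition in the spatial variable (split at $|P-Z|=2R$) that the paper uses, but the technical execution differs in a couple of worthwhile ways. For \eqref{CK-August} on the far region, you simply discard $\tau$ from the denominator $(|P-Z|^2+\tau)^{(n-\ga)/2}\geq|P-Z|^{n-\ga}$ and integrate the two variables separately, producing $R^{2\ga-1}t^{(1-\ga)/2}$ with no logarithm; the paper instead keeps the $\tau$-dependent cutoff $2r/\sqrt{\tau}$ via a rescaling, which generates a term $r^\la\lvert\ln(t/r^2)\rvert$ that must then be absorbed using the slack $\la>\al$. Likewise your single choice $\ga=(\al+1)/2$ handles the near region uniformly, whereas the paper splits into the cases $\al<\frac12$ and $\al\geq\frac12$ with a different constrained window for $\la$ in each. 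For \eqref{CK-June20-200} you take $\la=(1-\al)/2<\frac12$ so that the surface integral carries a $\tau$-gain $\tau^{\la-1/2}$, giving an integrable power $\tau^{\la/2-1}$; the paper takes $\la\in(\frac12,1)$ so the surface integral is uniformly bounded and the $\tau$-integrability comes instead from $(1+\la)/2<1$. Both work and give the explicit exponent $\de=(1-\al)/4$ in your version versus $\de=\frac12(\la-\al)$ (resp.\ $(1-\la)/2$) in the paper's. Your route is a little cleaner in that it avoids the logarithm and the case split; the paper's scaling argument is a more systematic (if slightly heavier) way to see exactly where the powers come from. One small point worth making explicit in a write-up: in the near region, to pass from the integral over $\{|P-Z|\leq 2R\}$ of $|\mathcal G(Q,Z,\tau)|$ to the bound \eqref{boundary-boundary1} centered at $Q$, you should note $\{|P-Z|\leq 2R\}\subset\{|Q-Z|\leq 3R\}$ and then enlarge the domain before applying the computation; you gesture at this with ``$|Q-Z|\leq 3R$'' but the set inclusion is the actual step.
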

\begin{proof}
First, we prove \eqref{CK-August}.
Let $r = |P -Q|$. Then, we have
\[
\int_0^t \int_{\pa \Om} |\mathcal G(P,Z,\tau) - \mathcal
G(Q,Z,\tau)| dZd\tau
=\int_0^t \int_{|P -Z| < 2r} \cdots  dZd\tau + \int_0^t \int_{|P -Z|
> 2r} \cdots  dZd\tau := I_1 + I_2.
\]
Via the inequality \eqref{boundary-boundary1}, for $0 < \lambda < 1$
we have
\begin{align*}
I_1 & \leq  c\int_0^t \int_{|Z| < 2r} \frac{  1  }{\tau^{\frac{1
+\la
 }2} (|Z|^2 + \tau)^{\frac{n -2\la}2} } dZd\tau  \leq
c\int_0^t  \tau^{ -1 + \frac{\lambda}2}   \int_{|Z| < 2
\frac{r}{\sqrt{\tau}}} \frac{  1  }{  (|Z|^2 + 1)^{\frac{n -2
\lambda}2} } dZd\tau.
\end{align*}
In case that $\al < \frac12$, we take $\lambda$ with $\al < \lambda
< \frac12$. If $r^2 \le t$, then
\begin{align*}
I_1 
    & \leq  c\int_0^{r^2}  \tau^{ -1 + \frac{\lambda}2} d\tau
      + c\int_{r^2}^t  \tau^{ -1 + \frac{\lambda}2}   ( \frac{r}{\sqrt{\tau}})^{n-1}
      d\tau
   \leq cr^\lambda \leq ct^{\frac12 \lambda -\frac12 \al} r^\al.
\end{align*}
On the other hand, if $r^2 > t$, then we have
\begin{align*}
I_1     \leq  c\int_0^t  \tau^{ -1 + \frac{\lambda}2} d\tau \leq
ct^{\frac{\lambda}2} \leq ct^{\frac12 \lambda -\frac12 \al}
r^\al.
\end{align*}
In case that $\al \geq \frac12$, we take $\lambda$ with $\al <
\lambda < \frac12 + \frac12 \al$. If $r^2 \le t$, then
\begin{align*}
I_1 
    & \leq  c\int_0^{r^2} \tau^{ -1 + \frac{\lambda}2} ( \frac{r}{\sqrt{\tau}})^{-1 + 2\lambda} d\tau
      + c\int_{r^2}^t  \tau^{ -1 + \frac{\lambda}2}   ( \frac{r}{\sqrt{\tau}})^{n-1} d\tau
   \leq cr^\lambda \leq ct^{\frac12 \lambda -\frac12 \al} r^\al.
\end{align*}
In case that $r^2 > t$, we get
\begin{align*}
I_1     &\leq  c\int_0^t \tau^{ -1 + \frac{\lambda}2} (
\frac{r}{\sqrt{\tau}})^{-1 + 2\lambda} d\tau \leq r^{-1 +2\lambda}
t^{\frac12 -\frac{\lambda}2} \leq ct^{\frac12 \lambda -\frac12 \al}
r^\al.
\end{align*}

For $I_2$, we take $\la > \al$. Using the estimate \eqref{boundary-boundary3}, we have
\begin{align*}
I_2 \leq c  r^\lambda   \int_0^t \int_{|Z| > 2r}\frac{ 1
}{\tau^{\frac{1 +\lambda}2} (|Z|^2 + \tau)^{\frac{n - \lambda}2}
}dZd\tau\leq c r^\lambda \int_0^t  \tau^{-1}\int_{|Z| >
2\frac{r}{\sqrt{\tau}}}
            \frac{ 1  }{  (|Z|^2 + 1)^{\frac{n - \lambda}2}
            }dZd\tau.
\end{align*}
If $r^2 \le t$, then
\begin{equation*}
I_2 \leq c  r^\lambda \int_0^{r^2}  \tau^{-1}( \frac{r}{\sqrt{\tau}}
)^{-1 +\lambda}d\tau + c r^\lambda \int_{r^2}^t  \tau^{-1} d\tau
\leq c r^{\lambda}    + c r^\lambda |\ln \,\frac{t}{r^2}|  \leq
ct^{\frac12 \lambda -\frac12 \al} r^\al.
\end{equation*}
For the last inequality, we used the fact of $|\ln \,\frac{t}{r^2}|
\leq c (\frac{t}{r^2})^{\frac{\la}2 -\frac{\al}2} $ for
$\frac{t}{r^2} \ge 1$. On the other hand, if $r^2 > t$, we have
\begin{align*}
I_2 \leq c  r^\lambda \int_0^t  \tau^{-1}( \frac{r}{\sqrt{\tau}}
)^{-1 +\lambda}d\tau \leq c  r^{-1+ 2\lambda} t^{\frac12
-\frac{\lambda}2} &\leq ct^{\frac12 \lambda -\frac12 \al} r^\al.
\end{align*}
%
%

It remains to prove \eqref{CK-June20-200}. Due the inequality
\eqref{boundary-boundary1}, we have
\[
\int_s^t \int_{\pa \Om} |\mathcal G(P,Z, \tau)| dZd\tau \leq  c
\int_s^t \int_{\pa \Om} \frac{1 }{\tau^{\frac{1+\lambda }2}
(|P-Z|^2 +  \tau )^{\frac{n -2\lambda }2} }    dZd\tau.
\]
%
Since $\Om$ is a bounded domain, taking $ \frac12 < \la  <1$, we can
see that $\int_{\pa \Om}\frac{1 }{ (|P-Z|^2 + \tau )^{\frac{n
-2\lambda }2} } dZ<c_\la$, and thus, the estimate
\eqref{CK-June20-200} is immediate.
\end{proof}

\begin{lemma}\label{lemma1}
Let $U$ be the hydrodynamical potential in \eqref{CK1107-2}.
Suppose that $\Phi\in L^\infty(\pa \Om; C^{\frac12 \al}[0, T])$ and $\Phi|_{t =0} =0$.
Then, there is $c > 0$ such that
\begin{align}\label{inequality508-1}
\| U[\Phi] \|_{\calC^{\al, \frac12 \al} (\pa \Om \times [0, T])} &\leq c
T^{\de}\| \Phi\|_{L^\infty(\pa \Om; \calC^{\frac12 \al}[0, T])},
\end{align}
\begin{align}\label{inequality508-2}
\| U[\Phi] \|_{ \dot \calC_{{\mathcal D}_{\eta}} (\pa \Om ; \dot
C^{\frac12 \al}[0, T])} & \leq c T^{\de}\| \Phi\|_{L^\infty(\pa
\Om; \calC^{\frac{1}2 \al} [0, T])},
\end{align}
where $\de > 0$ is the number in Lemma \ref{lemma508}.
\end{lemma}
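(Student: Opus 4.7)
The plan is to bound $U[\Phi]$ by decomposing the defining integral and applying the kernel estimates of Lemma \ref{lemma508} together with the assumption $\Phi(\cdot,0)=0$, which converts the time Hölder seminorm into pointwise bounds via $|\Phi(P,\tau)| = |\Phi(P,\tau)-\Phi(P,0)| \le \tau^{\frac{1}{2}\al}\|\Phi\|_{L^\infty(\pa\Om;\dot\calC^{\frac{1}{2}\al}[0,T])}$. The estimate naturally splits into four pieces: an $L^\infty$ bound, a spatial Hölder bound, a temporal Hölder bound, and the mixed double-difference bound for \eqref{inequality508-2}.

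For the $L^\infty$ and pure spatial pieces in \eqref{inequality508-1}, I would estimate
\[
|U[\Phi](P,t)| \le \|\Phi\|_{L^\infty(\pa\Om\times(0,T))} \int_0^t\!\!\int_{\pa\Om}|\mathcal G(P,Z,\tau)|\,dZd\tau,
\]
\[
|U[\Phi](P,t)-U[\Phi](Q,t)| \le \|\Phi\|_{L^\infty(\pa\Om\times(0,T))} \int_0^t\!\!\int_{\pa\Om}|\mathcal G(P,Z,\tau)-\mathcal G(Q,Z,\tau)|\,dZd\tau,
\]
and invoke \eqref{CK-June20-200} and \eqref{CK-August} respectively, controlling $\|\Phi\|_{L^\infty(\pa\Om\times(0,T))}$ itself by $T^{\frac{1}{2}\al}\|\Phi\|_{L^\infty(\pa\Om;\dot\calC^{\frac{1}{2}\al}[0,T])}$ thanks to $\Phi|_{t=0}=0$.

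For the time-Hölder estimate, the key move is a change of variables $\sigma=t-\tau$ that transfers the time dependence from the kernel to $\Phi$, giving for $s<t$
\begin{align*}
U[\Phi](P,t)-U[\Phi](P,s)
&= \int_0^s\!\!\int_{\pa\Om}\mathcal G(P,Z,\sigma)\bigl[\Phi(Z,t-\sigma)-\Phi(Z,s-\sigma)\bigr]dZd\sigma \\
&\quad + \int_s^t\!\!\int_{\pa\Om}\mathcal G(P,Z,\sigma)\,\Phi(Z,t-\sigma)\,dZd\sigma.
\end{align*}
The first integral is bounded using $|\Phi(Z,t-\sigma)-\Phi(Z,s-\sigma)|\le (t-s)^{\frac{1}{2}\al}\|\Phi\|_{*}$ and \eqref{CK-June20-200} (with left endpoint $0$), giving $cT^\de(t-s)^{\frac{1}{2}\al}\|\Phi\|_{*}$. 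For the second integral, since $\Phi(Z,0)=0$ and $\sigma\in[s,t]$ forces $t-\sigma\le t-s$, we have $|\Phi(Z,t-\sigma)|\le (t-s)^{\frac{1}{2}\al}\|\Phi\|_{*}$, and \eqref{CK-June20-200} yields $\int_s^t\!\int_{\pa\Om}|\mathcal G|\,dZd\sigma\le c(t-s)^\de\le cT^\de$.

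For the mixed double-difference \eqref{inequality508-2}, the same substitution produces
\begin{align*}
&U[\Phi](P,t)-U[\Phi](P,s)-U[\Phi](Q,t)+U[\Phi](Q,s) \\
&\;= \int_0^s\!\!\int_{\pa\Om}\!\bigl[\mathcal G(P,Z,\sigma)-\mathcal G(Q,Z,\sigma)\bigr]\bigl[\Phi(Z,t-\sigma)-\Phi(Z,s-\sigma)\bigr]dZd\sigma \\
&\quad + \int_s^t\!\!\int_{\pa\Om}\!\bigl[\mathcal G(P,Z,\sigma)-\mathcal G(Q,Z,\sigma)\bigr]\Phi(Z,t-\sigma)\,dZd\sigma,
\end{align*}
and applying \eqref{CK-August} to the kernel difference together with the same two $\Phi$ bounds yields a bound of order $(t-s)^{\frac{1}{2}\al}\,|P-Q|^\al\,T^\de\|\Phi\|_{*}$. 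The remaining subtle point—the step I expect to be the main obstacle—is upgrading the spatial factor $|P-Q|^\al$ to $\eta(|P-Q|)$ so that the right norm $\dot\calC_{\mathcal D_\eta}(\pa\Om;\dot\calC^{\frac{1}{2}\al}[0,T])$ appears on the left. Since $\pa\Om$ is bounded, $|P-Q|$ lies in a compact set and the ratio $|P-Q|^\al/\eta(|P-Q|)$ is uniformly controlled provided one exploits the freedom in Lemma \ref{lemma508} (choosing the auxiliary parameter $\lambda$ close to the allowed endpoint so as to trade spatial regularity for $\eta$-modulus), which produces the desired bound with the $T^\de$ smallness preserved.
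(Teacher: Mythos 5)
Your proof follows essentially the same route as the paper's: the $L^\infty$ bound and the spatial H\"older bound come from \eqref{CK-June20-200} and \eqref{CK-August} respectively; the temporal H\"older bound and the mixed double-difference both use the change of variable $\sigma=t-\tau$, the split $\int_0^s+\int_s^t$, the observation that $\Phi(\cdot,0)=0$ forces $|\Phi(Z,t-\sigma)|\le(t-s)^{\frac12\al}\|\Phi\|_{*}$ for $\sigma\in[s,t]$, and then \eqref{CK-June20-200} resp.\ \eqref{CK-August}. This is exactly the paper's decomposition. Your extra step of estimating $\|\Phi\|_{L^\infty(\pa\Om\times(0,T))}$ by $T^{\frac12\al}\|\Phi\|_{L^\infty(\pa\Om;\dot\calC^{\frac12\al}[0,T])}$ is harmless but unnecessary, since the right side of \eqref{inequality508-1} already carries the full $\calC^{\frac12\al}$ norm.

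The one place your argument does not hold up is the final paragraph on upgrading $|P-Q|^\al$ to $\eta(|P-Q|)$. Compactness of $\pa\Om$ only bounds $|P-Q|$ from \emph{above}; the concern about $|P-Q|^\al/\eta(|P-Q|)$ is as $|P-Q|\to 0$, where boundedness of the domain gives nothing. For a general increasing Dini modulus this ratio need not be bounded: $\eta(r)=r^\beta$ with $\al<\beta<1$ is Dini-admissible yet $r^\al/\eta(r)\to\infty$. Nor can the parameter $\lambda$ in Lemma \ref{lemma508} be tuned to produce an $\eta$-modulus; the conclusion \eqref{CK-August} is $ct^\de|P-Q|^\al$ for the fixed $\al$ regardless of the interior choice of $\lambda$. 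In fairness, the paper's own proof is equally terse at this point -- it passes from the $|P-Q|^\al$ bound of \eqref{CK-August} to \eqref{inequality508-2} without comment -- so you have correctly located the thin spot; but the proposed repair is not a valid one. What is tacitly used is the comparison $r^\al\le c\,\eta(r)$ near $r=0$, a restriction on $\eta$ not visible in the stated hypotheses.
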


\begin{proof}
We first note, due to \eqref{CK-June20-200} in Lemma \ref{lemma508},
that
\begin{align}\label{1209linfty}
\| U[\Phi]\|_{L^\infty(\pa \Om \times (0, T))} 
\leq \| \Phi\|_{L^\infty(\pa \Om \times (0, T))} \int_0^T |{\mathcal G}(\cdot, Q, t)|dQdt
\leq c T^\de \| \Phi\|_{L^\infty(\pa \Om \times (0, T))}.
\end{align}

We first note, due to \eqref{CK-August} in Lemma \ref{lemma508},
that
\[
\abs{U[\Phi] (P,t) - U[\Phi] (Q,t) }\leq c \int_0^t \int_{\pa \Om}
|\mathcal G(P,Z,t-\tau) - \mathcal G(Q,Z,t-\tau)| |\Phi(Z,\tau)|
dZd\tau
\]
\[
\leq c  \| \Phi\|_{L^\infty(\pa \Om \times [0, T])}\int_0^t
\int_{\pa \Om} |\mathcal G(P,Z,\tau) - \mathcal G(Q,Z,\tau)| dZd\tau
\]
\[
\leq c \| \Phi\|_{L^\infty(\pa \Om \times [0, T])} |P - Q|^\al
t^{\de}.
\]
Therefore, we obtain
\begin{align}\label{Gcal}
\|U[\Phi] \|_{L^\infty(0, T; \dot \calC^\al(\pa \Om))} \leq c T^{\de}
\| \Phi\|_{L^\infty(\pa \Om \times [0, T])}.
\end{align}
We also obtain for $s < t$
\begin{align*}
U[\Phi] (P,t) - U[\Phi] (P,s)= \int_s^t
\int_{\pa \Om} \mathcal G(P,Z,\tau)  \Phi(Z,t-\tau) dZd\tau \\
+   \int_0^s \int_{\pa \Om}  \mathcal G(P,Z,\tau) \big( \Phi(Z,
t-\tau) -\Phi(Z,s- \tau) \big) dZd\tau:=I_1+I_2.
\end{align*}
Since $\Phi(Z, 0) =0$, we have
\begin{align*}
|I_1| & \leq  \int_s^t
\int_{\pa \Om} |\mathcal G(P,Z,\tau)| |  \Phi(Z,t-\tau) -\Phi(Z, 0)|dZd\tau \\
& \leq  \| \Phi\|_{L^\infty(\pa \Om; \dot \calC^{\frac{\al}2} [0, T])} | t
-s|^{\frac{\al}2} \int_s^t
\int_{\pa \Om} |\mathcal G(P,Z,\tau)|  dZd\tau,\\
|I_2| & \leq  \int_0^s
\int_{\pa \Om} |\mathcal G(P,Z,\tau)| |  \Phi(Z,t-\tau) -\Phi(Z,s-\tau)|dZd\tau \\
& \leq  \| \Phi\|_{L^\infty(\pa \Om; \dot \calC^{\frac{\al}2} [0, T])} | t
-s|^{\frac{\al}2} \int_0^s \int_{\pa \Om} |\mathcal G(P,Z,\tau)|
dZd\tau.
\end{align*}
Hence,  via \eqref{CK-June20-200} in Lemma \ref{lemma508}, we
obtain
\begin{align}\label{1124}
\|U[\Phi] \|_{L^\infty(\pa \Om; \dot \calC^{\frac12 \al} [0, T])} \leq c
T^{\de} \| \Phi\|_{L^\infty(\pa \Om; \dot \calC^{\frac{\al}2} [0, T])}.
\end{align}
By \eqref{1209linfty}, \eqref{Gcal}  and \eqref{1124}, we completes the proof of \eqref{inequality508-1}.

For $s < t$, we
note that
\[
U[\Phi] (P,t) - U[\Phi] (P,s) - U[\Phi] (Q,t) + U[\Phi]
(Q,s)
\]
\[
=   \int_s^t \int_{\pa \Om}  \big( \mathcal G(P,Z,\tau)  -\mathcal
G(Q,Z, \tau) \big) \Phi(Z,t-\tau)  dZd\tau
\]
\[
+   \int_0^s \int_{\pa \Om}  \big( \mathcal G(P,Z,\tau) - \mathcal G(Q,Z, \tau) \big) \big( \Phi(Z, t-\tau) - \Phi(Z, s -\tau) \big) dZd\tau\\
= I_1 + I_2.
\]
Again using $\Phi(Z, 0) =0$, we obtain
\begin{align*}
|I_1| & \leq  \int_s^t
\int_{\pa \Om} |\mathcal G(P,Z,\tau) - \mathcal G(Q,Z, \tau)| |  \Phi(Z,t-\tau) -\Phi(Z, 0)|dZd\tau \\
& \leq  \| \Phi\|_{L^\infty(\pa \Om; \dot \calC^{\frac{\al}2} [0, T])} | t
-s|^{\frac{\al}2} \int_s^t
\int_{\pa \Om} |\mathcal G(P,Z,\tau) - \mathcal G(Q,Z, \tau)|  dZd\tau,\\
|I_2| & \leq  \int_0^s
\int_{\pa \Om} |\mathcal G(P,Z,\tau)  - \mathcal G(Q,Z, \tau)| |  \Phi(Z,t-\tau) -\Phi(Z,s-\tau)|dZd\tau \\
& \leq  \| \Phi\|_{L^\infty(\pa \Om; \dot \calC^{\frac{\al}2} [0, T])} | t
-s|^{\frac{\al}2} \int_0^s \int_{\pa \Om} |\mathcal G(P,Z,\tau)  -
\mathcal G(Q,Z, \tau)|  dZd\tau.
\end{align*}
By \eqref{CK-August} in Lemma \ref{lemma508}, we obtain
\eqref{inequality508-2}. This completes the proof.
\end{proof}

\begin{lemma}\label{lemma2}
Let $V$ be the electrostatic potential of a single layer given in
\eqref{CK1107-3}. Suppose that $\Psi \in C^{\al, \frac12 \al}(\pa
\Om \times [0, T]) \cap \dot \calC_{{\mathcal D}_{\eta}} (\pa \Om
; \dot \calC^{\frac12 \al}[0, T])$. Then, there is $c>0$ such that
\begin{align}\label{CK-1208-1}
\| \na_S V[\Psi ]  \|_{  \calC^{\al, \frac12 \al}(\pa \Om \times [0,
T])}   \leq c \big( \| \Psi\|_{   \calC^{\al, \frac{\al}2}(\pa
\Om\times[0, T])} + \|\Psi\|_{\dot \calC_{{\mathcal D}_{\eta}} (\pa \Om
; \dot \calC^{\frac12 \al}[0, T])} \big).
\end{align}
Furthermore, if  $\int_{\pa \Om} \Psi(Q,t) dQ =0$ for all $t\in [0,
T]$, then
\begin{equation}\label{CK-Aug16-300}
\|\Psi\|_{\calC^{\al, \frac{\al}2}(\pa\Om \times [0, T])} \leq c \| (I +
K^*)\Psi \|_{\calC^{\al, \frac{\al}2}(\pa\Om \times [0, T])},
\end{equation}
\begin{equation}\label{CK-Aug16-400}
\|\Psi\|_{ \dot  \calC_{D_{\eta}} (\pa
\Om; \dot \calC^{\frac12}([0, T]))}  \leq c \|
(I + K^*)[\Psi ]\|_{    \calC_{D_{\eta}} (\pa
\Om;  \calC^{\frac12}([0, T])}.
\end{equation}
\end{lemma}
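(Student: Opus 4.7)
The plan is to treat the three estimates of Lemma~\ref{lemma2} separately. For \eqref{CK-1208-1}, I would view $\na_S V[\Psi](\cdot,t)$ as a singular integral operator on the smooth hypersurface $\pa \Om$ applied to $\Psi(\cdot,t)$, with kernel $\na_S N(P-Q)$. Spatial H\"older continuity of order $\al$ follows from standard Calder\'on--Zygmund theory on $C^2$ surfaces, giving $\|\na_S V[\Psi](\cdot,t)\|_{\calC^\al(\pa\Om)} \le c\|\Psi(\cdot,t)\|_{\calC^\al(\pa\Om)}$, uniformly in $t$. For temporal H\"older continuity at a fixed $P\in\pa\Om$, I would write
\begin{align*}
\na_S V[\Psi](P,t) - \na_S V[\Psi](P,s) &= \int_{\pa\Om}\na_S N(P-Q)\bigl[(\Psi(Q,t)-\Psi(Q,s)) - (\Psi(P,t)-\Psi(P,s))\bigr]\,dQ \\
&\quad + (\Psi(P,t)-\Psi(P,s))\int_{\pa\Om}\na_S N(P-Q)\,dQ.
\end{align*}
The last surface integral is bounded since $\pa\Om$ is a compact $C^2$ hypersurface, so the second term is absorbed into $\|\Psi\|_{\calC^{\al,\al/2}(\pa\Om\times[0,T])}|t-s|^{\al/2}$. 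In the first term the bracket is controlled by $\|\Psi\|_{\dot\calC_{D_\eta}(\pa\Om;\dot\calC^{\al/2}[0,T])}\,\eta(|P-Q|)\,|t-s|^{\al/2}$, while $|\na_S N(P-Q)|\le c|P-Q|^{-(n-1)}$. Because the $(n-1)$-dimensional surface measure of a small cap of radius $r$ around $P$ scales like $r^{n-2}\,dr$, we get
\[
\int_{\pa\Om}|\na_S N(P-Q)|\,\eta(|P-Q|)\,dQ \le c\int_0^{r_0}\frac{\eta(r)}{r}\,dr<\infty
\]
by the Dini condition on $\eta$, which closes the bound.

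For \eqref{CK-Aug16-300} and \eqref{CK-Aug16-400}, I would reduce to a purely spatial invertibility statement since $K^*$ acts only in the space variable, pointwise in $t$. By classical potential theory on $C^2$ domains (as used in Solonnikov~\cite{So}), $K^*$ is compact on $\calC^\al(\pa\Om)$ and on $\dot\calC_{D_\eta}(\pa\Om)$, so $I+K^*$ is Fredholm of index zero on each. Any $\Psi\in\ker(I+K^*)$ with $\int_{\pa\Om}\Psi=0$ produces, through the single-layer representation $V[\Psi]$, a harmonic function satisfying a homogeneous boundary condition of Neumann type, forcing $V[\Psi]\equiv0$ and hence $\Psi\equiv0$ by the jump relations. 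Thus $I+K^*$ is an isomorphism on the mean-zero subspace of $\calC^\al(\pa\Om)$ and of $\dot\calC_{D_\eta}(\pa\Om)$. Applying the inverse slice-by-slice in $t$ transfers the right-hand side's temporal regularity (be it $\calC^{\al/2}[0,T]$ or $\calC^{1/2}[0,T]$) directly to $\Psi$, yielding both \eqref{CK-Aug16-300} and \eqref{CK-Aug16-400}.

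The main obstacle will be ensuring that the Fredholm inverse is uniform in the mixed Dini-in-space/H\"older-in-time norm appearing in \eqref{CK-Aug16-400}. Since $K^*$ commutes with the time variable, this essentially reduces to verifying that $(I+K^*)^{-1}$ is bounded on $\dot\calC_{D_\eta}(\pa\Om)$, which follows from boundedness of $K^*$ on that space (another Dini-type estimate in the spirit of the bound used for \eqref{CK-1208-1} above) combined with the uniqueness argument. A secondary issue is matching the sign convention of $K^*$ in \eqref{boundarysystem} with the standard Plemelj jump formulas, so that $I+K^*$ (rather than $-I+K^*$) really is the invertible operator on mean-zero data; I would verify this against the derivation in \cite{So}.
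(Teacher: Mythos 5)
Your overall strategy matches the paper's: (i) spatial $\calC^\al$ regularity of $\na_S V$ by Calder\'on--Zygmund theory; (ii) temporal regularity via a commutator-type decomposition in which the part with the Dini-in-space modulus produces an absolutely convergent integral $\int_{\pa\Om}\eta(|P-Q|)|P-Q|^{-(n-1)}\,dQ\lesssim\int_0^{r_0}\eta(r)r^{-1}\,dr<\infty$; and (iii) Fredholm invertibility of $I+K^*$ on the mean-zero subspaces of $\calC^\al(\pa\Om)$ and $\dot\calC_{D_\eta}(\pa\Om)$ via compactness plus injectivity, applied slice-by-slice in $t$. All of this is what the paper does.

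The one place where your write-up is not self-contained is the claim that ``the last surface integral $\int_{\pa\Om}\na_S N(P-Q)\,dQ$ is bounded since $\pa\Om$ is a compact $C^2$ hypersurface.'' The kernel $\na_S N(P-Q)$ has magnitude comparable to $|P-Q|^{-(n-1)}$, which is \emph{not} absolutely integrable on the $(n-1)$-dimensional surface; the integral only exists as a principal value, and its finiteness relies on a cancellation rather than on compactness. To see it you essentially need the decomposition the paper uses: write $T_l(P)=T_l(Z)+\big(T_l(P)-T_l(Z)\big)$, so that $\int_{\pa\Om}\frac{(P-Z)\cdot T_l(Z)}{|P-Z|^n}\,dZ=0$ (tangential derivative of $N(P-\cdot)$ integrated over the closed surface), while $|T_l(P)-T_l(Z)|\le c|P-Z|$ makes the remaining kernel integrable. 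Thus your proposal is not a genuinely different route; it is the same argument with the key cancellation left implicit. You should also be careful at the end to invoke \emph{compactness} of $K^*$ on $\dot\calC_{D_\eta}(\pa\Om)$ (as you correctly state earlier) rather than merely boundedness, since only compactness feeds the Fredholm alternative.
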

\begin{proof}
Since $ \na_S V[\Psi]: \dot \calC^\al(\pa \Om) \ri \dot \calC^\al(\pa \Om)$
is bounded, it follows that
\begin{equation}\label{CK-1208-2}
 \| \na_S V[\Psi] \|_{L^\infty(0, T; \dot  \calC^{\al }(\pa \Om  ))} \leq c   \| \Psi \|_{L^\infty(0, T; \dot \calC^{\al }(\pa \Om ) )}.
\end{equation}
Next, we will show that
\begin{equation}\label{CK-June21-20}
\| \na_S V[\Psi ]  \|_{L^\infty(\pa \Om, \dot \calC^{\frac{\al}2} [0,
T])}   \leq c \big( \| \Psi\|_{L^\infty(\pa \Om; \dot \calC^{\frac12
\al}[0, T])} + \|\Psi\|_{\dot \calC_{{\mathcal D}_{\eta}} (\pa \Om ;
\dot \calC^{\frac12 \al}[0, T])} \big).
\end{equation}
Indeed, we compute
\begin{align*}
\na_S V[\Psi](P,t)  - &\na_S V[\Psi](P,s) = \sum_1^{n-1}T_l(P)\int_{\pa \Om} \frac{(P-Z)\cdot (T_l(P) -T_l(Z) ) }{|P-Z|^n} \big(\Psi(Z,t) - \Psi(Z,s) \big) dZ\\
 & \quad + \sum_1^{n-1}T_l(P) \int_{\pa \Om} \frac{(P-Z)\cdot  T_l(Z) }{|P-Z|^n} \big(\Psi(Z,t) - \Psi(Z,s) \big) dZ:=K_1+K_2,
\end{align*}
where $T_l(P), \,1 \leq l \leq n-1$ are tangential vector on $P \in \pa \Om$.
Since $|T_l(P) - T_l(Q) | \leq c | P-Q|$, one can easily see the first term $K_1$ is estimated as
\begin{equation}\label{CK-June21-30}
|K_1 |\le c \| \Psi\|_{L^\infty(\pa \Om:\dot  \calC^{\frac12 \al}[0, T])}
|t-s|^{\frac{\al}2}.
\end{equation}
Since $\int_{\pa \Om} \frac{(P-Z)\cdot  T_l(Z) }{|P-Z|^n} dZ =0$, the
second term $K_2$ can be estimated as follows:
\begin{equation*}
|K_2|=|\int_{\pa \Om} \frac{(P-Z)\cdot  T_l(Z) }{|P-Z|^n} \big(\Psi(Z,t)
- \Psi(Z,s) - \Psi(P,t) + \Psi(P,s) \big) dZ|
\end{equation*}
\begin{equation}\label{CK-June21-40}
\leq c|t-s|^{\frac12 \al} \int_{\pa \Om} \frac{\eta(|P -Z|)
}{|P-Z|^{n-1}} dZ \| \Psi\|_{\dot \calC_{{\mathcal D}_{\eta}} (\pa \Om ;
\dot \calC^{\frac12 \al}[0, T])} \leq  c  |t-s|^{\frac12 \al} \|
\Psi\|_{\dot \calC_{{\mathcal D}_{\eta}} (\pa \Om ; \dot \calC^{\frac12
\al}[0, T]) }.
\end{equation}

Adding up \eqref{CK-June21-30} and \eqref{CK-June21-40}, we deduce
\eqref{CK-June21-20}.

Using the same argument, we get
\begin{align}\label{CK-June21-100}
\| \na_S V[\Psi]\|_{L^\infty(\Om \times (0, T))} \leq c \big(\| \Psi\|_{L^\infty(\Om \times (0, T))} + \| \Psi\|_{\dot \calC_{{\mathcal D}_{\eta}} (\pa \Om ;
\dot \calC^{\frac12 \al}[0, T]) } \big).
\end{align}
Hence, from \eqref{CK-1208-2}, \eqref{CK-June21-20} and  \eqref{CK-June21-100}, we complete the proof of \eqref{CK-1208-1}.

It remains to show the estimates
\eqref{CK-Aug16-300}-\eqref{CK-Aug16-400}. Since $\Om$ is  a ${\mathcal C}^2$ domain,
$K^* : \calC^\al_{\si,{\mathcal D}_{\eta}} (\pa \Om) \ri \calC^\al_{\si,{\mathcal
D}_{\eta}}(\pa \Om)$ is   compact operator, where
\begin{align*}
\calC_{\si, D_\eta}^\al(\pa \Om) : =\{ \Psi \in \calC_{D_\eta}^\al(\pa \Om) \, | \, \int_{\pa \Om} \Psi =0 \}.
\end{align*}
Since 
$ I + K^* : \calC^\al_{\si, D_{\eta}} (\pa \Om) \ri \calC^\al_{\si, D_{\eta}}(\pa \Om)$ is injective, by Fredhlom operator theory,
$ I + K^* : \calC^\al_{\si, D_{\eta}} (\pa \Om) \ri \calC^\al_{\si, D_{\eta}}(\pa
\Om)$ is bijective operator. Using the same argument, we note that
$I + K^* : \calC^\al_{\si} (\pa \Om) \ri \calC_{\si}^\al(\pa \Om)$ and  $I + K^* :
L^\infty_{\si} (\pa \Om) \ri L^\infty_\si(\pa \Om)$ are bijective
operators. Hence, for $\Psi$ satisfying $\int_{\pa \Om} \Psi =0$, we have
\begin{align*}
\|\Psi\|_{\calC^{\al }(\pa\Om) } \leq c \| (I +
K^*)\Psi \|_{\calC^{\al }(\pa\Om  )}, \\
\|\Psi \|_{\calC^\al_{{\mathcal D}_{\eta}} (\pa \Om)} \leq c \|
(I + K^*)[\Psi ]\|_{  \calC^\al_{{\mathcal D}_{\eta}} (\pa
\Om)},\\
\|\Psi\|_{L^\infty(\pa\Om )} \leq c \| (I +
K^*)[\Psi] \|_{L^\infty (\pa\Om  )}.
\end{align*}
In particular, for $s,t \in [0, T]$, we obtain
\begin{align*}
\|\Psi(t) - \Psi(s)\|_{L^\infty(\pa\Om )} \leq c \| (I +
K^*)[\Psi(t) - \Psi(s)] \|_{L^\infty (\pa\Om  )},\\
\|\Psi(t) - \Psi(s) \|_{\dot \calC^\al_{{\mathcal D}_{\eta}} (\pa
\Om)} \leq c \| (I + K^*)[\Psi(t) - \Psi(s) ]\|_{
\calC^\al_{{\mathcal D}_{\eta}} (\pa \Om)}.
\end{align*}
The above estimates immediately imply
\eqref{CK-Aug16-300}-\eqref{CK-Aug16-400}. This completes the proof.
\end{proof}

By Lemma \ref{lemma1}, we have
\begin{align}\label{1124-1}
\|U \|_{\calC_0^{\al, \frac12 \al}(\pa \Om \times [0, T]) \ri \calC_0^{\al,
\frac12 \al}(\pa \Om \times [0, T])}, \,\, \|U \|_{\calC_{{\mathcal D}_\eta 0}^{\al, \frac12 \al}(\pa \Om \times [0, T]) \ri \calC_{{\mathcal D}_\eta 0}^{\al,
\frac12 \al}(\pa \Om \times [0, T])} \leq c T^\de,
\end{align}
where
\begin{align*}
 \calC_0^{\al,
\frac12 \al}(\pa \Om \times [0, T]) = \{ f \in  \calC^{\al, \frac12
\al}(\pa \Om \times [0, T])\, \big| \, f|_{t = 0} \},\\
 \calC_{{\mathcal D}_\eta 0}^{\al,
\frac12 \al}(\pa \Om \times [0, T]) = \{ f \in  \calC_{{\mathcal D}_\eta }^{\al, \frac12
\al}(\pa \Om \times [0, T])\, \big| \, f|_{t = 0} \}.
\end{align*}
Hence, for $cT^\de < 1$, the operators $I + U_{tan}: \calC_0^{\al, \frac12
\al}(\pa \Om \times [0, T]) \ri \calC_0^{\al, \frac12 \al}(\pa \Om \times
[0, T])$ and $I + U_{tan}: \calC_{{\mathcal D}_\eta 0}^{\al,
\frac12 \al}(\pa \Om \times [0, T]) \ri  \calC_{{\mathcal D}_\eta 0}^{\al,
\frac12 \al}(\pa \Om \times [0, T])$ are bijective. Therefore, we have that
there is $T_0 > 0$ such that for $T \leq T_0$ and $\Psi$ satisfying $\Psi|_{t =0} =0$,
\begin{align}\label{lemma3}
\notag\|\Phi \|_{ \calC^{\al,\frac12 \al} (\pa \Om \times [0, T])} & \leq c \|
\Phi +  U_{tan} [\Phi ]\|_{ \calC^{\al,\frac12 \al} (\pa \Om \times [0,
T])},\\
\|\Phi \|_{\calC_{{\mathcal D}_\eta }^{\al,
\frac12 \al}(\pa \Om \times [0, T])} & \leq c \|
\Phi +  U_{tan} [\Phi ]\|_{ \calC_{{\mathcal D}_\eta }^{\al,
\frac12 \al}(\pa \Om \times [0, T])}.
\end{align}

%
%

\begin{proposition}\label{prop1}
Let $T<\infty$. Suppose that $g \in \calC_0^{\al,\frac{\al}2} (\pa \Om
\times [0, T])$ with
$g \cdot {\bf n}  \in \dot \calC_{{\mathcal D}_{\eta}}(\pa \Om; \dot \calC^{\frac{\al}2} ([0, T]) $,  and  satisfying the condition $
\int_{\pa \Om} g(Q,t) \cdot {\bf n}(Q) dQ =0 ,\quad \forall t \in (0, T)$,
 then, the system \eqref{boundarysystem} has
a unique solution $\Phi,  \, \Psi  \in \calC_0^{\al, \frac{\al}2}(\pa \Om
\times [0, T])$, $ \Psi\in  \dot \calC^{\al}_{D_\eta}(\pa \Om ; \dot \calC^{\frac12 \al}[0, T])$ with the conditions \eqref{compatible-condition}.
Furthermore, $(\Phi, \Psi)$ satisfies the following inequality:
\begin{align*}
\| \Phi\|_{\calC^{\al, \frac{\al}2}(\pa \Om \times [0, T])} + \| \Psi\|_{\calC^{\al, \frac{\al}2}(\pa \Om \times [0, T])} + \| \Psi
\|_{\dot \calC^{\al}_{D_\eta}(\pa \Om ; \dot \calC^{\frac12 \al}[0, T])} \leq
c\big( \| g\|_{ \calC^{\al,\frac{\al}2} (\pa \Om \times [0, T])} + \|
g\cdot {\bf n}\|_{\dot \calC_{D_\eta} (\pa \Om; \dot \calC^{\frac12 \al}[0, T] ) } \big),
\end{align*}
where $c=c(T)$.
\end{proposition}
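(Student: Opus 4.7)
The plan is to decouple the system \eqref{boundarysystem} into a single Volterra-type integral equation for $\Psi$, then solve it by a contraction argument on small time intervals using the $T^{\delta}$ smallness already established for the hydrodynamical potential $U$, and finally piece together a solution on the full interval $[0,T]$.

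\textbf{Step 1 (Reduction to a scalar equation).} For $T\le T_0$ with $T_0$ as in \eqref{lemma3}, the operator $I+U_{tan}$ is bijective on $\calC_0^{\al,\al/2}(\pa\Om\times[0,T])$ and on $\calC_{D_\eta,0}^{\al,\al/2}(\pa\Om\times[0,T])$. Solving the first line of \eqref{boundarysystem} yields
\[
\Phi = (I+U_{tan})^{-1}\bigl(g_{tan}-\na_S V[\Psi]\bigr),
\]
which automatically satisfies $\Phi\cdot{\bf n}=0$ since the whole equation lives in the tangent bundle. Substituting into the second line gives
\[
(I+K^*)\Psi \;=\; g\cdot{\bf n}\;-\;{\bf n}\cdot U\bigl[(I+U_{tan})^{-1}(g_{tan}-\na_S V[\Psi])\bigr] \;=:\; H(\Psi).
\]

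\textbf{Step 2 (Inverting $I+K^*$ and setting up the contraction).} By Lemma \ref{lemma2} (more precisely, the Fredholm-theoretic remark leading to \eqref{CK-Aug16-300}-\eqref{CK-Aug16-400}), $I+K^*$ is bijective on the mean-zero subspaces $\calC^{\al}_\sigma(\pa\Om)$ and $\calC^{\al}_{\sigma,D_\eta}(\pa\Om)$, uniformly in time. Applying the inverse one rewrites the equation as the fixed point problem
\[
\Psi \;=\; \calT(\Psi)\;:=\;(I+K^*)^{-1}H(\Psi).
\]
I would work on the Banach space
\[
X:=\bigl\{\Psi\in\calC_0^{\al,\al/2}(\pa\Om\times[0,T])\cap \dot\calC^\al_{D_\eta,0}(\pa\Om;\dot\calC^{\al/2}[0,T])\,:\,\textstyle\int_{\pa\Om}\Psi(\cdot,t)\,dQ=0\bigr\},
\]
with its natural sum-norm. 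On $X$, $\na_S V$ is bounded (Lemma \ref{lemma2}), $(I+U_{tan})^{-1}$ is bounded uniformly on $[0,T_0]$ by \eqref{lemma3}, and $U$ carries the crucial $T^{\delta}$ factor from \eqref{inequality508-1}--\eqref{inequality508-2}. Thus the $\Psi$-dependent part of $\calT$ has operator norm $\le cT^{\delta}$ on $X$, so $\calT$ is a strict contraction for $T$ small and produces a unique fixed point obeying the estimate claimed in the proposition on this short interval, with constant uniform in $T\le T_0$.

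\textbf{Step 3 (Constraints and initial condition).} The mean-zero requirement for $\Psi$ is preserved by $\calT$ because $\int_{\pa\Om}g\cdot{\bf n}\,dQ=0$ by hypothesis, $\int_{\pa\Om}{\bf n}\cdot U[\cdot]\,dQ=0$ (integrate the divergence-free structure of the hydrodynamical potential against ${\bf n}$, a standard potential-theoretic identity), and $(I+K^*)^{-1}$ preserves the mean-zero subspace. The vanishing of $\Phi|_{t=0}$ and $\Psi|_{t=0}$ is inherited from $g|_{t=0}=0$, since all of $U$, $\na_S V$, $K^*$ act at fixed time in a causal manner. Recovering $\Phi$ from the formula in Step~1 then yields the full pair $(\Phi,\Psi)$ in the required spaces, with $\Phi\cdot{\bf n}=0$ automatic.

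\textbf{Step 4 (From $T_0$ to arbitrary $T$; main obstacle).} This is the step requiring the most care. The operator $U$ is of Volterra type (causal in time), so although the $T^{\delta}$ bound is intrinsically local in time, the standard Volterra argument applies: the Neumann series $\sum_{k\ge 0}(-1)^k(\calT-\calT(0))^k$ converges on $[0,T]$ for every $T<\infty$ because the iterated kernel has vanishing spectral radius. Equivalently, having produced a solution on $[0,T_0]$ one treats $[T_0,2T_0]$ with modified (but still admissible) data obtained by subtracting the contribution of the already-constructed piece, and iterates. The bookkeeping needed to keep the constants uniform in $T$ (and to maintain membership in the Dini space $\dot\calC^\al_{D_\eta}(\pa\Om;\dot\calC^{\al/2}[0,T])$) is the main technical obstacle; in particular one must check that the final estimate depends on $g\cdot{\bf n}$ only through its $\dot\calC_{D_\eta}(\pa\Om;\dot\calC^{\al/2}[0,T])$-norm and on the tangential part $g_{tan}$ only through its $\calC^{\al,\al/2}$-norm, which is tracked by noting that $g_{tan}$ enters the equation for $\Psi$ only through the composition $\mathbf{n}\cdot U\,(I+U_{tan})^{-1}$, which by Lemma \ref{lemma1} maps $\calC^{\al,\al/2}$ to the Dini space with norm $\le cT^{\delta}$.
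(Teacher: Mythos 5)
Your proposal is correct and follows essentially the same route as the paper: both rest on the $T^{\delta}$-smallness of $U$ from Lemma \ref{lemma1}, the boundedness and invertibility facts from Lemma \ref{lemma2}, a contraction/iteration to produce the solution on a short interval, and a piecing-together argument to reach arbitrary $T$. The only cosmetic difference is that you condense the system to a single affine fixed-point equation for $\Psi$ and invoke $(I+U_{tan})^{-1}$ explicitly, whereas the paper runs the equivalent Picard iteration on the pair $(\Phi_m,\Psi_m)$ and then contracts the differences.
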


\begin{proof}
Let $T \leq T_0$, where $T_0$ is  a constant defined   \eqref{lemma3}.
By  \eqref{lemma3}, we  solve the following
equation:
\[
\Phi_1 + U_{tan}[\Phi_1] =g_{tan}
\]
and $\Phi_1$ satisfies
\begin{align*}
 \Phi_1 \in \calC_0^{\al, \frac12 \al}(\pa \Om \times [0, T]), \qquad \| \Phi_1\|_{\calC^{\al,\frac{\al}2}(\pa \Om \times [0, T])} \leq c\|
g_{tan}\|_{\calC^{\al,\frac{\al}2}(\pa \Om \times [0, T])}
\end{align*}
and  $\Phi_1 \cdot {\bf n} =0$.  Note that since ${\mathcal U}[\Phi_1]$ is divergence free,  $\int_{\pa \Om} {\bf n}
\cdot U [\Phi_1] =0$.  In the proof of Lemma \ref{lemma2}, there is $\Psi_1$ we
solve
\begin{align*}
\Psi_1 + K^* [\Psi_1] = g\cdot {\bf n} - {\bf n} \cdot U
[\Phi_1]
\end{align*}
and by   \eqref{lemma3} and Lemma \ref{lemma2}, $\Psi_1 \in \calC_0^{\al, \frac12 \al}(\pa \Om \times [0, T])$ satisfies
\[
\| \Psi_1\|_{\calC^{\al,\frac{\al}2}(\pa \Om \times [0, T])}\leq
c\big(\| g\cdot {\bf n}\|_{\calC^{\al,\frac{\al}2}(\pa \Om \times [0,
T])} + T^\de \| \Phi_1\|_{\calC^{\al,\frac{\al}2}(\pa \Om \times [0, T])}
\big),
\]
\[
\| \Psi_1\|_{\dot \calC^{\frac{\al}2}( [0, T]; \dot \calC_{{\mathcal
D}_{\eta}}(\pa \Om ))}\leq  c \big(\| g\cdot {\bf n}\|_{\dot
\calC^{\frac{\al}2}( [0, T]; \dot \calC_{{\mathcal D}_{\eta}}(\pa \Om ))} +
T^\de \| \Phi_1\|_{\calC^{\al,\frac{\al}2}(\pa \Om \times [0, T])} \big).
\]
Iteratively, we define $(\Phi_{m+1}, \Psi_{m+1})$ for any
$m=1,2,\cdots$ as follows:
\begin{align}\label{iterate}
\notag \Phi_{m+1} + U_{tan}[\Phi_{m+1}] =g_{tan} - \na_S V[\Psi_m],\\
\Psi_{m+1} + K^* [\Psi_{m+1}]= g\cdot {\bf n} - {\bf n} \cdot
U[\Phi_{m+1}].
\end{align}
We then note that $ \Phi_{m+1}$ satisfies
\begin{align}\label{estimatephim+1}
\notag \| \Phi_{m+1}\|_{\calC^{\al,\frac{\al}2}(\pa \Om \times [0, T])} \leq c
\big( \| g_{tan}\|_{\calC^{\al,\frac{\al}2}(\pa \Om \times [0, T])} + \|
\na_S V[\Psi_m] \|_{\calC^{\al,\frac{\al}2}(\pa \Om \times [0, T])}
\big)\\
\leq  c \big( \| g_{tan}\|_{\calC^{\al,\frac{\al}2}(\pa \Om \times [0,
T])} +  \| \Psi_m \|_{\calC^{\al,\frac{\al}2}(\pa \Om \times [0, T])} +
\|\Psi_m \|_{\dot \calC^{\frac{\al}2}( 0, T;  \dot \calC_{{\mathcal D}_{\eta}} (\pa
\Om ))} \big),
\end{align}
where we used Lemma \ref{lemma2}.  On the other hand, for $\Psi_{m+1}$ we observe that
\begin{align}\label{estimatepsim+1}
\notag\| \Psi_{m+1}\|_{\calC^{\al,\frac{\al}2}(\pa \Om \times [0, T])}
\leq c\big( \| g\cdot {\bf n}\|_{\calC^{\al,\frac{\al}2}(\pa \Om \times [0, T])}
+  T^\de\|  \Phi_{m+1}\|_{\calC^{\al,\frac{\al}2}(\pa \Om \times [0, T])} \big),\\
\| \Psi_{m+1}\|_{ \dot \calC^{\frac{\al}2}( 0, T;\dot  \calC_{{\mathcal
D}_{\eta}}(\pa \Om ))}
 \leq  c\big(\| g\cdot {\bf n}\|_{ \dot \calC^{\frac{\al}2}( 0, T; \dot \calC_{{\mathcal D}_{\eta}}(\pa \Om ))}
 +T^\de \|  \Phi_{m+1}\|_{\calC^{\al,\frac{\al}2}(\pa \Om \times [0, T])} \big),
\end{align}
where we used  \eqref{lemma3}  and Lemma \ref{lemma1}.

For uniformly
convergence, we denote $\phi_m = \Phi_{m+1} - \Phi_m$ and $\psi_m =
\Psi_{m+1} - \Psi_m$. Then, $(\phi_{m}, \psi_m)$ solves
\[
\phi_{m+1} + U_{tan}[\phi_{m+1}] =- \na_S V[\psi_m],
\]
\[
\psi_{m+1} + K^* [\psi_{m+1}]=- {\bf n} \cdot U[\phi_{m+1}]
\]
and it satisfies
\begin{align*}
\| \phi_{m+1}\|_{\calC^{\al,\frac{\al}2}(\pa \Om \times [0, T])}
&\leq  c\big( \|\psi_m \|_{\calC^{\al, \frac{\al}2}( \pa \Om  \times [0, T])}
+ \|\psi_m \|_{\dot \calC^{\frac{\al}2}( [0, T]; \dot \calC_{{\mathcal D}_{\eta}} (\pa \Om ))} \big),\\
\| \psi_{m+1}\|_{\calC^{\al,\frac{\al}2}(\pa \Om \times [0, T])}
&\leq  c T^\de \|  \phi_{m+1}\|_{\calC^{\al,\frac{\al}2}(\pa \Om \times [0, T])},\\
\| \psi_{m+1}\|_{ \dot \calC^{\frac{\al}2}( [0, T]; \dot \calC_{{\mathcal
D}_{\eta}}(\pa \Om ))} & \leq   cT^\de \|
\phi_{m+1}\|_{\calC^{\al,\frac{\al}2}(\pa \Om \times [0, T])}.
\end{align*}
Hence, we obtain
\begin{align*}
\| \phi_{m+1}\|_{\calC^{\al,\frac{\al}2}(\pa \Om \times [0, T])}
 &\leq   cT^\de \|  \phi_m\|_{\calC^{\al,\frac{\al}2}(\pa \Om \times [0, T])},\\
 \| \psi_{m+1}\|_{\calC^{\al,\frac{\al}2}(\pa \Om \times [0, T])} + \| \psi_{m+1}\|_{\dot \calC^{\frac{\al}2}( [0, T]; \dot \calC_{{\mathcal D}_{\eta}}(\pa \Om ))}
&\leq  c  T^\de \big(\|  \psi_{m}\|_{\calC^{\al,\frac{\al}2}(\pa \Om
\times [0, T])} +  \|  \psi_{m}\|_{\calC^{\al,\frac{\al}2}(\pa \Om
\times [0, T])} \big).
\end{align*}
This implies that there is $T^* > 0$ with $T^*\le T_0$ such that
$\{\Phi_m, \Psi_m\}$ converges for some $(\Phi^1, \Psi^1) \in
\calC_0^{\al, \frac{\al}2}(\pa \Om \times [0, T^*]) \times \calC^\al_{
D_\eta, 0}(\pa \Om; \calC^{\frac12 \al}([0, T^*]))$. From \eqref{iterate}, \eqref{estimatephim+1} and \eqref{estimatepsim+1}, $(\Psi^1, \Psi^1)$ satisfy
\begin{align*}
&\Phi^1 + U_{tan}[\Phi^1]  +\na_S V[\Psi^1] =g_{tan},\\
&\Psi^1 + K^* [\Psi^1] + {\bf n} \cdot
U[\Phi^1] = g\cdot {\bf n}\quad \mbox{in} \quad \Om \times (0, T^*) 
\end{align*}
and 
\begin{align*} 
\| \Phi^1\|_{\calC^{\al,\frac{\al}2}(\pa \Om \times [0, T^*])}  + \| \Psi^1\|_{\calC^{\al,\frac{\al}2}(\pa \Om \times [0, T^*])}+ \| \Psi^1\|_{ \dot \calC^{\frac{\al}2}( 0, T^*;\dot  \calC_{{\mathcal
D}_{\eta}}(\pa \Om ))}\\
 \leq  c(T^*)\big(  \| g \|_{\calC^{\al,\frac{\al}2}(\pa \Om \times [0, T^*])}  + \| g\cdot {\bf n}\|_{ \dot \calC^{\frac{\al}2}( 0, T^*; \dot \calC_{{\mathcal D}_{\eta}}(\pa \Om ))}
\big).
\end{align*}

To construct $(\Phi, \Psi)$ up to any time $T$, we introduce $h$, which is given as
\begin{align*}
h(P,t) = \Phi^1(P,T^*) + \Psi^1(P,T^*) {\bf n}(P) + \int_0^{T^*}
\int_{\pa \Om} {\mathcal G}(t + T^* -\tau) \Phi^1(Q, \tau) dQd\tau + \na
V[\Psi^1 (T^*)](P)
\end{align*}
for $t \in [0, T^*]$ such that $h(P, 0) = g(P, T^*)$. For $t \in [0,
T^*]$, let us $g^1(P,t) = g(P,T^* + t) - h(P,t)$ such that $g^1 \in
\calC_0^{\al, \frac{\al}2}(\pa \Om \times [0, T^*])$ and $ g^1 \cdot {\bf
n} \in  \dot \calC_{ D_\eta}(\pa \Om; \dot \calC^{\frac12 \al}([0, T^*]))$.
By above argument, there is  $(\Phi^2, \Psi^2) \in \calC_0^{\al,
\frac{\al}2}(\pa \Om \times [0, T^*]) \times \calC^\al_{0, D_\eta}(\pa
\Om; \calC^{\frac12 \al}([0, T^*]))$ such that
\begin{align*}
\Phi^2 + U_{tan}[\Phi^2] + \na_S V[\Psi^2] = h^1_{tan},\\
\Psi^2 + K^* [\Psi^2] + U[\Phi^2] \cdot {\bf n} = h^1 \cdot {\bf n}.
\end{align*}
We define $(\Phi, \Psi)$ by
\begin{align*}
\Phi(P,t) = \left\{\begin{array}{ll}
\Phi^1(P,t), & \qquad 0 \leq t  \leq T^*,\\
\Phi^2 (P,t-T^*) + \Phi^1(P,T^*), &\qquad T^* \leq t \leq 2T^*,
\end{array}
\right. \\
\Psi(P,t) = \left\{\begin{array}{ll}
\Psi^1(P,t), &\qquad 0 \leq t  \leq T^*,\\
\Psi^2 (P,t-T^*) + \Psi^1(P,T^*), & \qquad T^* \leq t \leq 2T^*.
\end{array}
\right.
\end{align*}
Then, we obtain $\Phi \in \calC_0^{\al, \frac{\al}2}(\pa \Om
\times [0, 2T^*]), \,\, \Psi  \in \calC_0^{\al, \frac{\al}2}(\pa \Om
\times [0, 2T^*])  \cap \calC^\al_{D_\eta}(\pa \Om; \calC^{\frac12
\al}([0, 2T^*]))$ and
\begin{align*}
\Phi + U_{tan}[\Phi] + \na_S V[\Psi] &= g_{tan}, \\
\Psi + K^* [\Psi] + U[\Phi] \cdot {\bf n} &= g \cdot {\bf n}, \quad \mbox{in} \quad  \pa \Om \times [0, 2T^*].
\end{align*}
We repeat the above procedure until we reach any given time $T$.
This completes the proof.
\end{proof}

\subsection{Global estimates}
Next, we estimate the global estimates of solution of Stokes equations.

\begin{proposition}\label{CK-June21-1000}
Let $\Phi \in \calC_0^{\al, \frac{\al}2}(\pa \Om \times [0, T])$   and $\Psi \in
\calC_0^{\al,\frac12 \al}(\pa \Om \times [0, T])\cap \dot \calC^{\frac{\al}2}
(0, T; \dot \calC_{{\mathcal D}_{\eta}}(\pa \Om))$ satisfying \eqref{compatible-condition}. Suppose that
${\mathcal U}[\Phi]$ and ${\mathcal V}[\Psi]$ are defined in \eqref{potential-U} and
\eqref{potential-V}.
Then,
\begin{equation}\label{CK-June21-8000}
\|{\mathcal U}[\Phi]\|_{  \calC^{ \al,\frac12 \al}(  \bar \Om \times [0, T]))} \leq c \| \Phi
\|_{\calC^{\al, \frac{\al}2}(\pa \Om \times [0, T])},
\end{equation}
\begin{equation}\label{CK-June21-820}
\| \nabla {\mathcal V}[\Psi]\|_{  \calC^{\al,\frac{\al}2}( \Om \times [0, T])} \leq
c \bke{ \| \Psi\|_{ \calC^{\al,\frac12 \al}(\pa \Om \times [0, T])} + \|
\Psi\|_{\dot \calC^{\frac{\al}2} (0, T; \dot \calC_{{\mathcal D}_{\eta}}(\pa
\Om))} }.
\end{equation}
\end{proposition}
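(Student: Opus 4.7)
The plan is to carry over the arguments proving the boundary estimates in Lemma \ref{lemma1} and Lemma \ref{lemma2} to the case where the evaluation point $x$ ranges over all of $\bar\Om$, rather than only $\pa\Om$. The preparatory step is to verify that the kernel bounds \eqref{CK-August}--\eqref{CK-June20-200} remain valid when one or both boundary points are replaced by interior points $x,y\in\bar\Om$:
\[
\int_0^t\!\!\int_{\pa\Om}|\mathcal{G}(x,Z,\tau)|\,dZ\,d\tau\le ct^{\de},\qquad
\int_0^t\!\!\int_{\pa\Om}|\mathcal{G}(x,Z,\tau)-\mathcal{G}(y,Z,\tau)|\,dZ\,d\tau\le ct^{\de}|x-y|^{\al}.
\]
These follow from the same computation as in Lemma \ref{lemma508}, since the only inputs used are the pointwise bounds \eqref{boundary-boundary1}--\eqref{boundary-boundary3} on $\mathcal{G}$, which depend on $|x-Z|$ through a scaling that behaves uniformly for $x\in\bar\Om$ and $Z\in\pa\Om$ when $\pa\Om\in\calC^2$.

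Granting these, the estimate \eqref{CK-June21-8000} follows verbatim from the argument of Lemma \ref{lemma1}. The $L^\infty$ bound of $\mathcal{U}[\Phi]$ is immediate from the first kernel estimate. The spatial Hölder seminorm is controlled by the second kernel estimate together with $\|\Phi\|_{L^\infty(\pa\Om\times[0,T])}$. For the temporal Hölder seminorm one splits
\[
\mathcal{U}[\Phi](x,t)-\mathcal{U}[\Phi](x,s)=\int_s^t\!\!\int_{\pa\Om}\!\mathcal{G}(x,Z,\tau)\Phi(Z,t-\tau)dZd\tau+\int_0^s\!\!\int_{\pa\Om}\!\mathcal{G}(x,Z,\tau)\bigl[\Phi(Z,t-\tau)-\Phi(Z,s-\tau)\bigr]dZd\tau,
\]
exploits $\Phi(\cdot,0)=0$ in the first integral and the $\frac{\al}{2}$-Hölder regularity of $\Phi$ in time for the second, and closes both terms with the first kernel estimate above.

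The bound \eqref{CK-June21-820} on $\nabla\mathcal{V}[\Psi]$ separates in time and in space. The temporal Hölder seminorm is essentially free from the linearity in $\Psi$: writing $\nabla\mathcal{V}[\Psi](x,t)-\nabla\mathcal{V}[\Psi](x,s)=\nabla\mathcal{V}[\Psi(\cdot,t)-\Psi(\cdot,s)](x)$, one controls the $L^\infty_x$-norm of a single-layer type operator with data of size $\|\Psi\|_{\dot\calC^{\al/2}_t}|t-s|^{\al/2}$, which is finite by integrability of $|\nabla N|$ over the compact surface $\pa\Om$. The spatial seminorm is handled as in Lemma \ref{lemma2} by a tangential-frame decomposition: fixing $x\in\Om$, one picks a nearest-point projection $x^\ast\in\pa\Om$, decomposes $(x-Q)$ along tangent vectors and the normal at $x^\ast$, and subtracts $\Psi(x^\ast,t)$ inside the tangential singular integral. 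The cancellation $\int_{\pa\Om}\frac{(P-Z)\cdot T_l(Z)}{|P-Z|^n}dZ=0$ used at boundary points is replaced by a uniform bound valid for $x\in\Om$, after which the Dini condition $\int_0^1\eta(r)r^{-1}dr<\infty$ absorbs the resulting logarithmic singularity, paralleling the computation in Lemma \ref{lemma2}.

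The main obstacle is the spatial seminorm of $\nabla\mathcal{V}[\Psi]$ for $x$ very close to $\pa\Om$: the clean cancellation of the tangential singular integral available at boundary points is lost, and one must supplement it with a uniform estimate on the analogous integral for interior $x$, together with a Dini-continuity argument hinged on the nearest-point projection. Once this technical adaptation is made, the remainder of the proof is essentially a transcription of the boundary arguments already established.
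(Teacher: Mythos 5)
Your proposal diverges from the paper's proof at both key estimates, and in one place there is a genuine gap.

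For the bound \eqref{CK-June21-8000} on $\mathcal U[\Phi]$, your plan rests on extending the kernel estimate $\int_0^t\int_{\pa\Om}|\mathcal G(x,Z,\tau)-\mathcal G(y,Z,\tau)|\,dZ\,d\tau\le c\,t^\de|x-y|^\al$ uniformly to interior $x,y\in\bar\Om$, citing the boundary pointwise bounds \eqref{boundary-boundary1}--\eqref{boundary-boundary3}. This extension is not justified and appears false near $\pa\Om$: those bounds are stated for $P,Q,Z\in\pa\Om$, and the interior versions \eqref{509inequality-2} behave differently, involving $\de(x)$ and $x_n$. In fact, a direct estimate of $\int_0^t\int_{\pa\Om}|D_x\mathcal G(x,Z,\tau)|\,dZ\,d\tau$ blows up like $\de(x)^{-1}$ as $x\to\pa\Om$, so the naive mean-value argument cannot yield a uniform $|x-y|^\al$ bound without an extra cancellation from the data. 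The paper precisely exploits that cancellation: it subtracts $\Phi(P_x,\tau)$ and $\Phi(P_x,t)$ inside the integral, proves the weighted gradient estimate $\sup_{(x,t)}\de^{1-\al}(x)|\na_x\mathcal U(x,t)|\le c\|\Phi\|_{\calC^{\al,\al/2}}$, and then passes to the interior Hölder seminorm by invoking the standard implication from a $\de^{\al-1}$ gradient bound to $\calC^\al$ regularity (with references to \cite{JK} and \cite{FMM}). That step, not a kernel Hölder estimate, is the crux. Your temporal-seminorm argument for $\mathcal U[\Phi]$, on the other hand, does match the paper.

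For \eqref{CK-June21-820}, you propose to redo the tangential-frame singular-integral cancellation directly at interior $x$ with a nearest-point projection, and you correctly flag the spatial seminorm near $\pa\Om$ as "the main obstacle," proposing an unspecified "technical adaptation." The paper avoids this obstacle entirely by noting that $\na\mathcal V[\Psi]$ is harmonic in $\Om$ for each fixed $t$, so its interior $\calC^\al$ norm is controlled by its boundary $\calC^\al$ norm (classical elliptic regularity / maximum principle), and the boundary norm is already estimated in Lemma \ref{lemma2} via the decomposition into $\na_S V[\Psi]$ (tangential) and $(I+K^*)\Psi$ (normal); the temporal seminorm is handled the same way by applying the maximum principle to the harmonic difference $\na_x\mathcal V[\Psi](\cdot,t)-\na_x\mathcal V[\Psi](\cdot,s)$. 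Your route, even if completable, needlessly re-derives a hard estimate that harmonicity gives for free, and as written leaves that estimate open.
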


\begin{proof}
Let $x \in \Om$. Choose $P_x \in \pa \Om$ satisfying $\de(x) = |x
- P_x|$.  Using the rotation and translation, we may assume that $x = (0, x_n)$,  $P_x =0$ and $\de(x) = x_n$.
We recall that
\begin{align}\label{509inequality-2}
\notag |\mathcal G(x,Q,t)|
& \leq   c_\la \frac{\de(x)^\la}{  t^{\frac{1+\la}2} (|x-Q|^2 +t )^{\frac{n}2  }}, \quad 0 < \la <1\\
|D_x \mathcal G(x,Q,t)| &\leq \frac{c}{t^\frac12 (x_n^2
+t)^{\frac12} (|x -Q|^2 +t)^{\frac{n}2}}
\end{align}
(see
\cite{K} and \cite{So}).  From the first inequality of \eqref{509inequality-2}, we have
\begin{align*}
| U[\Phi](x,t)| 
&\leq c \| \Phi\|_{L^\infty(\Om \times (0, T))} \int_0^t \int_{\pa \Om}  \frac{\de(x)^\la}{  \tau^{\frac{1+\la}2} (|x-Q|^2 +\tau )^{\frac{n}2  }} dQd\tau\\
&\leq c \| \Phi\|_{L^\infty(\Om \times (0, T))}.
\end{align*}

To complete the proof of \eqref{CK-June21-8000}, we first show that
\begin{align}\label{weight}
\sup_{(x,t) \in \overline \Om \times [0, T]} \de^{1 -\al} (x) \abs{\na_x
{\mathcal U}(x,t)} \leq c  \| \Phi \|_{\calC^{\al, \frac{\al}2}(\pa \Om \times [0,T
]))}.
\end{align}

It is known (see e.g. \cite[Theorem 4.1]{JK} and  \cite[Theorem
1.4]{FMM}) that the estimate \eqref{weight} implies
\begin{equation}\label{CK-June21-800}
\|{\mathcal U}[\Phi]\|_{L^\infty(0, T; \dot \calC^{ \al}( \overline \Om))} \leq c \| \Phi
\|_{\calC^{\al, \frac{\al}2}(\pa \Om \times [0, T])}.
\end{equation}

 Since $\Phi(P, 0) =0$,  we compute
\begin{align*}
D_{x} {\mathcal U}[\Phi](x,t)&=\int_0^t \int_{\pa \Om}   \na_x \mathcal G(x, Q,
t-s) \big( \Phi(Q, \tau) -\Phi(P_x, \tau) \big) dQd\tau\\
&\qquad + \int_{-\infty}^t \int_{\pa \Om} \na_x \mathcal G(x, Q, t-s) \big(
\Phi(P_x, \tau) - \Phi(P_x, t) \big) dQd\tau\\
&\qquad + \Phi(P_x, t) \int_{-\infty}^t \int_{\pa \Om} \na_x \mathcal G(x,
Q, t-s) dQd\tau: = I_1 + I_2 + I_3.
\end{align*}
Referring to \cite[(4.5)]{KS1}, we note that there exists a small
$\ep> 0$ such that
\begin{align*}
|I_3| \leq  c_\ep x_n^{-\ep}\| \Phi \|_{L^\infty(0, T; \pa \Om)}.
\end{align*}
Noting that for $Q \in \pa \Om$, $|x-Q|^2 \approx x_n^2 + |P_x -Q|^2$ and using
\eqref{509inequality-2}, we estimate $I_1$ and $I_2$ as
\begin{align*}
|I_1| & \leq c \| \Phi \|_{L^\infty(0, T; \dot \calC^\al (\pa \Om))}
\int_0^t   \frac{1}{(t -\tau)^\frac12 (x_n^2  + t
-\tau)^{\frac12}}
      \int_{\pa \Om} \frac{|P_x - Q|^\al}{(|P_x -Q|^2+ x_n^2 + t -\tau)^{\frac{n}2}} dQd\tau\\
& \leq c \| \Phi \|_{L^\infty(0, T; \dot \calC^\al (\pa \Om))} \int_0^t
\frac{1}{\tau^\frac12 (x_n^2  + \tau)^{1 - \frac{\al}2}}d\tau\leq
c \| \Phi \|_{L^\infty(0, T; \dot \calC^\al (\pa \Om))} x_n^{-1
+\al},
\end{align*}
\begin{align*}
|I_2|  & \leq c \| \Phi \|_{L^\infty(\pa \Om; \dot \calC^{\frac{\al}2}
[0, T])} \int_0^t   \frac{1}{(t -\tau)^\frac12 (x_n^2  + t
-\tau)^{\frac12}}\int_{\pa \Om} \frac{(t-\tau)^{\frac{\al}2}}{(|P_x -Q|^2+ x_n^2 + t -\tau)^{\frac{n}2}} dQd\tau\\
& \leq c \| \Phi \|_{L^\infty(\pa \Om; \dot \calC^{\frac{\al}2} [0, T])}
\int_0^t \frac{1}{(t -\tau)^{\frac12 -\frac{\al}2} (x_n^2  + t
-\tau)} d\tau \leq c \| \Phi \|_{L^\infty(\pa \Om; \dot
\calC^{\frac{\al}2} [0, T])}x_n^{-1 +\al}.
\end{align*}
Summing up above estimates, we obtain \eqref{weight}.


Let $h > 0$ and we compute
\[
{\mathcal U}[\Phi](x,t+h) -{\mathcal U}[\Phi](x,t)
\]
\[
= \int_0^{t+h} \int_{\pa \Om}  {\mathcal G}(x,Q,t+h-\tau) \Phi(Q,\tau) dQd\tau
- \int_0^t \int_{\pa \Om}    {\mathcal G}(x,Q, t-\tau)    \Phi(Q,\tau) dQd\tau
\]
\[
 = \int_0^{t} \int_{\pa \Om}    {\mathcal G}(x,Q, \tau)  \big(
\Phi(Q, t+h -\tau ) -\Phi(Q,t-\tau) \big) dQd\tau
      +\int_t^{t+h}\int_{\pa \Om}  {\mathcal G}(x,Q, \tau)  \Phi(Q,t+h-\tau) dQd\tau
\]
\[
: = I_1 + I_2.
\]
By \eqref{CK-June20-200}, we have
\begin{align*}
|I_1| & \leq   h^{\frac12 \al} \| \Phi\|_{L^\infty(\pa \Om; \dot \calC^{\frac12 \al}[0,T])} \int_0^{t} \int_{\pa \Om}   | {\mathcal G}(x,Q, \tau) |  dQd\tau\\
& \leq  c h^{\frac12 \al} t^\de \| \Phi\|_{L^\infty(\pa \Om; \dot \calC^{\frac12 \al}[0,T])},\\
|I_2| & \leq  |\int_t^{t+h}\int_{\pa \Om}  {\mathcal G}(x,Q, \tau)  \big( \Phi(Q,t+h-\tau) - \Phi(Q, 0 )\big) dQd\tau|\\
& \leq   h^{\frac12 \al} \| \Phi\|_{L^\infty(\pa \Om; \dot \calC^{\frac12 \al}[0,T])} \int_t^{t+h} \int_{\pa \Om}   | {\mathcal G}(x,Q, \tau) |  dQd\tau\\
& \leq  c h^{\frac12 \al} (t+ h)^\de \| \Phi\|_{L^\infty(\pa \Om; \dot \calC^{\frac12 \al}[0,T])}.
\end{align*}
Hence, we have
\[
| {\mathcal U}[\Phi](x,t+h) -{\mathcal U}[\Phi](x,t) |\leq c \| \Phi \|_{L^\infty(\pa \Om; \dot \calC^{\frac{\al}2} [0, T])}
h^\frac{\al}2.
\]
Therefore, we obtain
\begin{equation}\label{CK-June21-810}
\|{\mathcal U}[\Phi]\|_{L^\infty( \Om; \dot \calC^{ \frac{\al}2}  [0, T])} \leq c \|
\Phi \|_{ L^\infty (\pa \Om; \dot \calC^{\frac{\al}2} [0, T])}.
\end{equation}
By \eqref{CK-June21-800} and \eqref{CK-June21-810}, we obtain \eqref{CK-June21-8000}.

It remains to show \eqref{CK-June21-820}. By the well known result of the harmonic function, boundedness of $K^*:L^\infty(0, T; \calC^\al(\pa \Om)) \ri L^\infty(0, T; \calC^\al(\pa \Om))      $ and \eqref{CK-1208-1}, we have
\begin{align*}
\| \nabla {\mathcal V}[\Psi]\|_{L^\infty(0, T; \calC^\al( \overline\Om))}  
&\le c\|  \nabla {\mathcal V}[\Psi]\|_{L^\infty(0, T; \calC^\al(\pa \Om))} \\
& \leq c\big( \|  \nabla_S V [\Psi]\|_{L^\infty(0, T; \calC^\al(\pa \Om))}  + \| (I + K^*)[\Psi]\|_{L^\infty(0, T; \calC^\al(\pa \Om))}   \big)\\
 &\le c \bke{ \| \Psi\|_{ \calC^{\al,\frac12 \al}(\pa \Om \times [0, T])} + \|
\Psi\|_{\dot \calC^{\frac{\al}2} (0, T; \dot \calC_{{\mathcal D}_{\eta}}(\pa
\Om))} }.
\end{align*}
For $t \neq s$,  we note that
\begin{align}\label{interior1031}
\na_x {\mathcal V}[\Psi](x,t) -\na_x {\mathcal V}[\Psi](x,s) = \nabla_x\int_{\pa \Om}
N(x -Q) \big( \Psi(Q,t) - \Psi(Q,s) \big) dQ.
\end{align}
By maximum principle of the harmonic function, we have
\begin{align*}
\abs{\na_x {\mathcal V}[\Psi](x,t) - \na_x{\mathcal  V}[\Psi](x,s)} &\leq  \sup_{Q \in \pa
\Om} \abs{\na_x {\mathcal V}[\Psi](Q,t) - \na_x {\mathcal V}[\Psi](Q,s)}.
\end{align*}
We note that
\begin{align*}
\na_x {\mathcal V}[\Psi](Q,t) =\sum_1^{n-1}  \big( \na_x {\mathcal V}[\Psi] (Q,t) \cdot T_l (Q) \big)  T_l(Q) + \big( \na_x {\mathcal V}[\Psi] (Q,t) \cdot {\bf n}(Q) \big)  {\bf n}(Q).
\end{align*}
Here,
\[
\na_x {\mathcal V}[\Psi] (Q,t) \cdot T_l (Q) =  p.v \int_{\pa \Om} \frac{(Q -Z)
\cdot T_l(Q)}{|Q -Z|^n} \Psi(Z,t) dZ,
\]
\[
\na_x {\mathcal V}[\Psi] (Q,t) \cdot {\bf n} (Q) = \bke{ I + K^*
}[\Psi](Q,t).
\]
%
Here $K^*[\Psi](Q,t)$ is defined  in \eqref{double-layer}.
Since $\Om$ is a smooth domain, it is known that
\begin{align*}
|K^* [ \Psi(\cdot,t) - \Psi (\cdot,s)](Q )| & \leq c \| \Psi(\cdot, t) - \Psi
(\cdot, s)\|_{L^\infty (\pa \Om)}.
\end{align*}
Since $ \int_{\pa \Om} \frac{(Q -Z) \cdot T_l(Z)}{|Q -Z|^n} dZ =0$,
we have
\[
\bke{\na_x {\mathcal V} [\Psi](Q,t) - \na_x {\mathcal V}[\Psi] (Q,s)} \cdot T_l (Q)
\]
\[
=\int_{\pa \Om} \frac{(Q -Z) \cdot  (T_l(Q) - T_l(Z))}{|Q -Z|^n}
\big( \Psi(Z,t) -\Psi(Z,s) \big)   dZ
\]
\[
+  \int_{\pa \Om} \frac{(Q -Z) \cdot T_l(Z)}{|Q -Z|^n}
\big(\Psi(Z,t) -\Psi(Q,t) - \Psi(Z,s) +\Psi(Q,s)\big) dZ
\]
\[
\leq c  |t -s|^{\frac12 \al} \| \Psi \|_{ L^\infty(\pa \Om; \dot
\calC^{\frac12 \al}(\pa \Om))} + |t -s|^{\frac12 \al} \| \Psi \|_{\dot
\calC^{\frac12 \al}( 0, T; \dot \calC_{{\mathcal D}_{\eta}}(\pa \Om))} \int_{\pa
\Om} \frac{\eta(|Q -Z|)}{|Q -Z|^{n-1}}dZ
\]
\[
\leq  c  |t -s|^{\frac12 \al} \big ( \| \Psi \|_{ L^\infty(\pa \Om;
\dot \calC^{\frac12 \al}(\pa \Om))} +\| \Psi \|_{\dot \calC^{\frac12 \al}(
0, T; \dot \calC_{{\mathcal D}_{\eta}}(\pa \Om))} \big).
\]
Therefore, we obtain
\begin{align*}
\|\na_x V[\Psi]\|_{L^\infty(\Om; \dot \calC^{\frac12 \al}[0, T])} =
\sup_{t, s \in [0, T]} \frac{\|\na_x\psi(\cdot,t) - \na_x
\psi(\cdot,s)\|_{L^\infty(\pa \Om)}}{|t-s|^{\frac12 \al}}
\leq  \| \Psi\|_{ \calC^{\frac12 \al}(0, T;  \calC_{{\mathcal D}_{\eta}}
(\pa \Om))}.
\end{align*}
This completes the proof.
\end{proof}

\begin{remark}\label{rem-4}
Let $\Om_\de = \{ x \in \Om \, | \, {\rm dist} \,\, (x, {\rm supp} \, \Psi)) \geq \de \}$ for $\de > 0$.
Then, from \eqref{interior1031}, we can obtain
\begin{align*}
\|\na_x V[\Psi]\|_{L^\infty(\Om_\de; \dot \calC^{\frac12 \al}[0, T])} \leq c_\de \| \Psi\|_{L^\infty(\pa \Om; \dot \calC^{\frac12 \al}[0, T])}.
\end{align*}
This implies
\begin{align*}
\| u\|_{\calC^{\al, \frac12 \al}(\bar \Om_\de \times [0, T])} \leq c_\de \big(  \| \Phi\|_{\calC^{\al, \frac12 \al}(\pa \Om \times [0, T])} + \| \Psi\|_{  \calC^{\al, \frac12 \al}(\pa \Om \times [0, T])}\big).
\end{align*}

\end{remark}

Summarizing the above results, we obtain the following.
\begin{theorem}\label{stokes-boundary}
Let $0 < \al<1 $. Let $g \in \calC^{\al, \frac12 \al}(\pa \Om \times (0,
T))$ such that   $g   \cdot {\bf n} \in \dot \calC^{\frac{\al}2} (0, T;
\dot \calC_{{\mathcal D}_{\eta}} (\pa \Om))$ and  $\int_{\pa \Om}g(Q,t)
{\bf n}(Q) dQ =0$.  Then, there exists a unique solution $u \in \calC^{\al,
\frac12 \al}(\Om \times [0, T])$ of the Stokes system
\[
u_t -\De u + \na P = 0,\quad {\rm div}\,u=0,\qquad \mbox{in }\,\,\Om
\times (0, T)
\]
\[
u|_{\pa \Om \times (0, T)} = g,\qquad u(x,0)=0.
\]
Furthermore, $u$ satisfies
\begin{equation}\label{CK-June21-1100}
\| u\|_{\calC^{\al, \frac12 \al}(\overline \Om \times [0, T])} \leq c \big( \|
u_0\|_{\calC^\al(\overline \Om)} + \|u_0\|_{\dot C_{{\mathcal D}_{\eta}}(\overline \Om)} +
\| g\|_{C^{\al, \frac12 \al}(\overline \Om \times [0, T])  } + \| g\cdot {\bf
n} \|_{ \dot \calC^{\frac{\al}2} (0, T; \dot C_{{\mathcal D}_{\eta}}
(\pa \Om))} \big).
\end{equation}
%
\end{theorem}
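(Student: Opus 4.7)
The plan is to combine Proposition \ref{prop1} (solvability of the boundary integral system) with Proposition \ref{CK-June21-1000} (global H\"older estimates for the potentials) using Solonnikov's representation \eqref{form}. First, given $g$ obeying the mean-zero compatibility condition $\int_{\pa\Om} g\cdot {\bf n}\,dQ=0$ together with the stated H\"older/Dini hypotheses, invoke Proposition \ref{prop1} to produce a unique pair of densities $(\Phi,\Psi)$ in $\calC_0^{\al,\frac{\al}{2}}(\pa\Om\times[0,T])$, with $\Psi$ additionally in $\dot\calC^{\al}_{D_\eta}(\pa\Om;\dot\calC^{\frac12\al}[0,T])$, satisfying the boundary integral system \eqref{boundarysystem} and the side relations \eqref{compatible-condition}. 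The quantitative bound supplied by Proposition \ref{prop1} reads
\[
\|\Phi\|_{\calC^{\al,\frac{\al}{2}}} + \|\Psi\|_{\calC^{\al,\frac{\al}{2}}} + \|\Psi\|_{\dot\calC^{\frac{\al}{2}}(0,T;\dot\calC_{D_\eta}(\pa\Om))} \le c\bke{\|g\|_{\calC^{\al,\frac{\al}{2}}(\pa\Om\times[0,T])} + \|g\cdot{\bf n}\|_{\dot\calC_{D_\eta}(\pa\Om;\dot\calC^{\frac{\al}{2}}[0,T])}}.
\]

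Next, define $u(x,t):=\mathcal{U}[\Phi](x,t)+\nabla\mathcal{V}[\Psi](x,t)$ as in \eqref{form}. The very design of \eqref{boundarysystem}, together with the classical jump relations for the harmonic single layer $\mathcal V$ (giving rise to the $I+K^*$ operator on the normal component) and for the hydrodynamical potential $\mathcal U$ (whose tangential trace equals $\Phi+U_{tan}[\Phi]$ and whose normal trace contributes $\mathbf{n}\cdot U[\Phi]$), ensures that $u|_{\pa\Om\times[0,T]}=g$, matching both tangential and normal components. Since $\mathcal{U}[\Phi]$ is built from the Stokes tensor $\mathcal{G}$ together with its associated pressure kernel, and since $\nabla\mathcal{V}[\Psi]$ is the gradient of a time-dependent harmonic function (hence divergence free and contributing only to the pressure), the field $u$ together with an appropriate pressure $q$ solves the homogeneous Stokes system in $\Om\times(0,T)$. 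Zero initial data follows from $\Phi|_{t=0}=0$, $\Psi|_{t=0}=0$, and the harmonicity of $\mathcal V$.

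The H\"older control of $u$ on $\overline\Om\times[0,T]$ is then furnished by Proposition \ref{CK-June21-1000} applied to each piece,
\[
\|\mathcal U[\Phi]\|_{\calC^{\al,\frac{\al}{2}}(\overline\Om\times[0,T])} \le c\|\Phi\|_{\calC^{\al,\frac{\al}{2}}(\pa\Om\times[0,T])},
\]
\[
\|\nabla\mathcal V[\Psi]\|_{\calC^{\al,\frac{\al}{2}}(\overline\Om\times[0,T])} \le c\bke{\|\Psi\|_{\calC^{\al,\frac{\al}{2}}(\pa\Om\times[0,T])} + \|\Psi\|_{\dot\calC^{\frac{\al}{2}}(0,T;\dot\calC_{D_\eta}(\pa\Om))}},
\]
and chaining with the bound on $(\Phi,\Psi)$ from Proposition \ref{prop1} delivers \eqref{CK-June21-1100} (any $u_0$ contribution is vacuous, since the present subproblem has zero initial datum). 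For uniqueness, the difference $w$ of two H\"older solutions with the same data solves the Stokes system with zero boundary and zero initial data; either a direct energy estimate, or the injectivity half of Proposition \ref{prop1} applied to the representation \eqref{form}, forces $w\equiv 0$.

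The principal nontrivial point is not any single estimate — both Propositions \ref{prop1} and \ref{CK-June21-1000} have already done the heavy analytic work — but rather the bookkeeping needed to confirm that the ansatz $u=\mathcal U[\Phi]+\nabla\mathcal V[\Psi]$ assembled from the boundary densities actually realizes the prescribed trace $g$. This is where the precise structure of \eqref{boundarysystem}, including the appearance of $U_{tan}$, $\nabla_S V$, and $I+K^*$ via the jump relations on $\pa\Om$, must be matched component-by-component against the decomposition $g=g_{tan}+(g\cdot{\bf n}){\bf n}$. Once this identification is in place, the stated estimate and uniqueness follow at once.
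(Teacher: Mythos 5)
Your proposal follows essentially the same route as the paper: invoke Proposition~\ref{prop1} to solve the boundary integral system \eqref{boundarysystem} for the densities $(\Phi,\Psi)$, define $u=\mathcal U[\Phi]+\nabla\mathcal V[\Psi]$ via Solonnikov's representation \eqref{form}, and apply Proposition~\ref{CK-June21-1000} to obtain the global H\"older bound. Your write-up is somewhat more explicit than the paper's terse proof about why the ansatz realizes the trace $g$ and about uniqueness, but the underlying argument is identical; as a side note, the paper writes $u=\mathcal U[\Phi]+\mathcal V[\Psi]$ in the proof, which is an apparent typo for $u=\mathcal U[\Phi]+\nabla\mathcal V[\Psi]$, and you have it correct.
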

\begin{proof}
From Proposition
\ref{prop1},  there is $(\Phi, \Psi) \in \calC^{\al, \frac{\al}2}(\pa
\Om) \times \calC^{\frac{\al}2}(0, T; \calC_{{\mathcal D}_{\eta}} (\pa
\Om))$ such that
\begin{align*}
\notag \Phi  + U_{tan} [\Phi]  + \na_S V [\Psi] = g_{tan},\\
\Psi + K^* [\Psi] + {\bf n}\cdot U
[ \Phi] = g \cdot {\bf n}.
\end{align*}
Let $ u(x,t) = {\mathcal U}[\Phi](x,t) + {\mathcal V}[\Psi](x,t)$.
Then, the estimate \eqref{CK-June21-1100} of $u$ is consequences of
Proposition \ref{CK-June21-1000}. 
This completes the proof.
\end{proof}

As mentioned earlier, results of Theorem \ref{mainthm-Stokes} is
direct due to Theorem \ref{stokes-boundary}. Since its verification
is direct, we omit its details.


\section{Construction of an example in Theorem \ref{counter-exmaple}}\label{example}
\setcounter{equation}{0}

In this section, we construct an example, which shows that the condition of boundary data in  Theorem \ref{mainthm-Stokes} is crucial.
\\
\begin{pfthm2}

We consider the Stokes system \eqref{CK-Aug6-10} in two dimensions. Suppose that $\Om \subset {\mathbb R}^2_+$ and  a part of
boundary is flat and it contains, via translation, the open unit
interval, e.g. $\{x_1\in\R: \abs{x_1}<2\}$. We let $g=(g_1,
g_2):\R\times \R\rightarrow\R^2$ such that $g_1$ is identically
zero, that is $g_1 =0$, and $g_2$ is defined by
\begin{align*}
g_2(x_1, t) = (|x_1|^2 +  |t|)^{\frac12 \al}  \bke{\arctan
\frac{x_1^\al}{ t^{\frac12 \al}}}   \bke{\arctan
\frac{t^{\frac12 \al}}{x_1^\al}} \chi_{\{x_1> 0\}}(x_1) \chi_{\{t >
0\}}(t) \phi(x_1),
\end{align*}
where $\phi \in C^\infty_c (-1, 1)$ with $\phi \equiv 1$ in
$(-\frac12, \frac12)$. Clearly, $g$ is supported in $B_1\times \R_+$
and $g \in C^{\al, \frac12 \al}({\mathbb R} \times {\mathbb R} )$
(See Theorem 1.4 in \cite{FMM}). However, we can see that $g \notin
\dot {\mathcal C}_{D_\eta}( {\mathbb R} \times {\mathbb R})$.
Indeed, suppose that $ g \in
\dot {\mathcal C}_{D_{\eta_0}}( {\mathbb R} \times {\mathbb R}) $
for some Dini-continuous function $\eta_0$. Note that $\liminf_{r\ri 0} \eta_0(r) =0$.  Taking $x_1 = t^2$, we have
\begin{align*}
\| g\|_{\dot {\mathcal C}_{D_{\eta_0}}( {\mathbb R} \times {\mathbb R}) }
& \geq \frac{|g(0,0) - g(0,s) -g(x_1, 0) + g(x_1,t)|}{t^{\frac{\al}2} \eta_0(x_1) }\\
&= \frac{ g(t^2,t) }{t^{\frac12 \al} \eta_0(t^2) }=\frac{2^\frac12
t^{\frac12 \al} } {t^{\frac12 \al} \eta_0(t^2) }=\frac{2^\frac12  }
{  \eta_0(t^2) } \ri \infty \qquad \mbox{ as }\, t \ri 0.
\end{align*}
We  consider the Stokes system in a half-space with boundary
data $g$ and the solution $u = (u^1, u^2) $ is represented  by (see
\cite{K} and \cite{So})
\begin{align*}
u^i(x,t) &= \sum_{j=1}^{2}\int_0^t \int_{{\mathbb R} } K_{ij}(
x_1-y_1,x_2,t-s)g_j(y_1,s) dy_1ds,\qquad i=1,2,
\end{align*}
where
\begin{align*}
K_{ij}(x_1-y_1,x_2,t) &  =  -2 \delta_{ij} D_{x_n}\Ga(x_1-y_1, x_2,t)  -L_{ij} (x_1-y_1,x_2,t) \\
& \quad +  \de_{j2} \de(t)  D_{x_i} N(x_1-y_1,x_2),\qquad i,j=1,2
\end{align*}
with
\begin{align*}
L_{ij}(x, t)  =  D_{x_j}\int_0^{x_2} \int_{{\mathbb R}}   D_{z_2}
\Ga(z,t) D_{x_i}   N(x-z)  dz,\qquad i,j=1,2.
\end{align*}
From Remark \ref{rem-4}, we obtain that $u \in \calC^{\al, \frac12 \al}(\pa \Om \times [0, 1])$.
This completes the proof. 

Here the tangential component of $u$, i.e. $u^1$, is given by
\begin{align*}
&u^1(x,t)  = - \int_0^t \int_{{\mathbb R} }
L_{12}(x_1-y_1,x_2,t-s)g_2(y_1,s) dy_1 ds+ \int_{{\mathbb R}}
D_{y_1} N(x_1-y_1,x_2 )g_2(y_1, t) dy_1\\
&\quad = - \int_0^t \int_{{\mathbb R}} L_{21}(
x_1-y_1,x_2,t-s)g_2(y_1,s) dy_1ds -  \int_0^t
\int_{{\mathbb R}} D_{x_2}\Ga(x_1 -y_1, x_2, t-s) H g_2(y_1, s) dy_1 ds\\
&\qquad + \int_{{\mathbb R}} D_{y_1} N( x_1-y_1,x_2 )g_2 (y_1, t)
dy_1: = I_{1}(x,t) + I_2(x,t) + I_3(x,t),
\end{align*}
where $H$ is a Hilbert transform defined as
\begin{align*}
H g(y_1) = p.v.\frac1{\pi} \int_{{\mathbb R}} \frac1{y_1 -z_1} g(z_1 ) dz_1  =
\lim_{\ep \ri 0} \frac1{\pi}\int_{|y_1 -z_1| > \ep} \frac1{y_1 -z_1} g(z_1 ) dz_1.
\end{align*}

It can be
checked that $I_1 \in C^{\al, \frac12 \al} ({\mathbb R} \times
{\mathbb R})$ (see e.g. \cite{CJ}) and so we obtain
$|I_1(0,x_2,t)- I_1 (0,0,0)| \leq c (x_2^2 + t)^{\frac12 \al}$.
  Hence, we have
\begin{align*}
|u^1(0, x_2,t) - u^1(0,0)| =  |u^1(0,x_2,t)| \geq  | I_2(0, x_2,t)+
I_3(0,x_2,t)| - c (x_2^2 + t)^{\frac12 \al}.
\end{align*}


Now, we estimate $| I_2(0, x_2,t)|$. Since $g_2$ is a function  in
Holder continuous and has compact support in $ x_1 \in (0,1) $, $Hg_2 $ is
bounded in ${\mathbb R}$ and $|Hg_2(y_1)| \leq c |y_1|^{-1}$ for $|y_1| \geq
1/2$. For $|y_1| \leq \frac12 $, we have
\begin{equation}\label{CK-Oct29-10}
Hg_2(y_1,s) =    \int_{{\mathbb R}}   \frac{1}{y_1 -z_1}  g_2(z_1,\tau) dz_1 =
\int_0^{|y_1|}  \cdots dz_1 + \int_{|y_1|}^{2|y_1|} \cdots dz_1 +
\int_{2|y_1|}^1 \cdots dz_1.
\end{equation}
Here, using change of variable, the first and second terms are
estimated as follows:
\[
\int_0^{|y_1|}  \cdots dz_1 +   \int_{|y_1|}^{2|y_1|} \cdots dz_1
=\int_0^{|y_1|} \frac{1}{z_1} (g_2(y_1-z_1,\tau)  - g_2(y_1
+z_1,\tau) ) dz_1
\]
\[
\leq c\int_0^{|y_1|} \frac{1}{z_1} z_1^\al dz_1 \thickapprox
|y_1|^\al.
\]
It remains to estimate the third term in \eqref{CK-Oct29-10}.
Firstly, in case that $\tau  < (2|y_1|)^2 $, we obtain
\begin{align}\label{CK1107-5}
&\notag \int_{2|y_1|}^1 \frac{1}{y_1-z_1 } g_2(z_1,\tau)dz_1
\thickapprox \int_{2|y_1|}^1 \frac{1}{y_1-z_1 }(|z_1|^2 +
\tau)^{\frac12 \al}
 \bke{\arctan \frac{z_1^\al}{ \tau^{\frac12 \al}}}    \bke{\arctan  \frac{\tau^{\frac12 \al}}{z_1^\al}} dz_1\\
& \qquad\quad\thickapprox -\int_{2|y_1|}^1 \frac{1}{z_1 } (|z_1|^\al
+ \tau^{\frac12 \al} ) \frac{\tau^{\frac12 \al}{z_1^\al}} dz_1
\thickapprox  - \tau^{\frac12 \al} \big( -\ln \, |y_1| + \tau ^{
\al} |y_1|^{-\al} \big).
\end{align}
On the other hand, if $\tau  >(2 |y_1|)^2 $, then  we have
\begin{align}\label{CK1107-8}
\notag \int_{2|y_1|}^1 \frac{1}{y_1-z_1 } g_2(z_1,\tau)dz_1
&  =  \int_{2|y_1|}^{\tau^\frac12} \frac{1}{y_1-z_1 }(|z_1|^2 +  \tau)^{\frac12 \al}
 \bke{\arctan \frac{z_1^\al}{ \tau^{\frac12 \al}}}     \bke{\arctan \frac{\tau^{\frac12 \al}}{z_1^\al}} dz_1\\
\notag & \quad +  \int_{\tau^\frac12}^1 \frac{1}{y_1-z_1 }(|z_1|^2 +
\tau)^{\frac12 \al}  \bke{\arctan \frac{z_1^\al}{
\tau^{\frac{\al}2}}}
  \bke{\arctan (\frac{\tau^{\frac12 \al}}{z_1^\al}} dz_1  \\
\notag&  \thickapprox  -\int_{2|y_1|}^{\tau^\frac12} \frac{1}{z_1 }(|z_1|^\al +
\tau^{\frac12 \al} ) \frac{z_1^\al}{ \tau^{\frac{\al}2}} dz_1
               -  \int_{\tau^\frac12}^1 \frac{1}{z_1 }(|z_1|^\al +  \tau^{\frac12 \al} )  \frac{\tau^{\frac12 \al}}{z^\al_1}   dz_1\\
& = -\big(\frac1{2\al} (\tau^{\frac12 \al} - \tau^{-\frac12 \al} |y_1|^{2\al}) -\frac1\al |y_1|^\al- \tau^{\frac12 \al} \ln \,
\tau^\frac12 \big).
\end{align}
For $x_1 = 0 $ and  $x_2 > 0$, we get
\begin{align*}
I_2(0,x_2, t) &= \int_0^t \int_{{\mathbb R}} D_2\Ga( -y_1, x_2, t-s)
H g_2(y_1, s) dy_1 ds= J_1+ J_2,
\end{align*}
where
\begin{align*}
J_1 : & = \int_0^t
\int_{ |y_1| \geq \frac12} D_2\Ga( -y_1, x_2, t-s) H g_2(y_1, s) dy_1 ds,\\
J_2 : & = \int_0^t \int_{|y_1| \leq \frac12 } D_2\Ga( -y_1, x_2, t-s) H
g_2(y_1, s) dy_1 ds.
\end{align*}
Noting that $\int^{\infty}_{\frac{2}{\sqrt{s}}}
e^{-y_1^2} \frac{1}{y_1} dy_1 ds \leq c e^{-\frac4s}$ for $s \leq 1$
and $e^{-a} \leq c_k a^{-k}$ for $a, k > 0$, we have
\begin{align*}
J_1 & \leq  2 \int_0^t \int_{\frac12}^\infty D_2\Ga( y_1, x_2, t-s)
\frac{1}{y_1} dy_1 ds
\leq  c\int_0^t \frac{x_2}{s^2} e^{-\frac{x_2^2}{s}} \int^{\infty}_{\frac{2}{\sqrt{s}}} e^{-y_1^2} \frac{1}{y_1} dy_1 ds\\
& \leq c \int_0^t  \frac{x_2}{s^2} e^{-\frac{x_2^2}{s}} e^{-\frac4s}
ds \leq  c\int_0^t  \frac{x_2}{s^2} e^{-\frac{x_2^2}{s}} s^{k} ds.
\end{align*}
Using the change of variables ($\eta: =\frac{x_2^2}{s} $) and taking $k = \frac{\al +1}2 < 1$, we have
\begin{align*}
J_1 & \leq c \int_{\frac{x_2^2}{t}}^\infty x_2^{2k -1} \eta^{-k}
e^{-\eta} d\eta\leq c x_2^\al.
\end{align*}

Next, from \eqref{CK1107-5} and \eqref{CK1107-8}, we have
\begin{align*}
J_2 & =  \int_0^t
\int_{|y_1| \leq \frac12 } D_2\Ga( y_1, x_2, s) H g_2(y_1, t-s) dy_1 ds\\
& \thickapprox  -\int_0^t \frac{x_2}{s^2}  e^{-\frac{x_2^2}{s}} \big( \int_0^{s^\frac12} e^{-\frac{y_1^2}{s}} \big( s^{-\frac12 \al}(s^\al - y_1^{2\al}) + (s^{\frac12 \al} - y_1^\al) - s^{\frac12 \al} \ln \, s^\frac12   + s^{\frac12 \al} \big) dy_1 ds \\
&  \qquad - \int_{s^\frac12}^1 e^{-\frac{y_1^2}{s}} s^{\frac12 \al}
\big( \ln \, y + s^\al y_1^{-\al} \big) dy_1 \big) ds:= J_2^1 + J_2^2.
\end{align*}
$J_2^1$ is computed as follows:
\begin{align*}
 J_2^1 & \leq   c \int_0^t \frac{x_2}{s^2}  e^{-\frac{x_2^2}{s} }     s^{\frac12 \al} |\ln \, s |
                         \int_0^{s^\frac12} e^{-\frac{y_1^2}{s}}   dy_1 ds\\
 & = c  \int_0^t \frac{x_2}{s^2}  e^{-\frac{x_2^2}{s} }    s^{\frac12 \al} |\ln \, s |
                        s^\frac12 \int_0^1 e^{- y_1^2 }   dy_1 ds\\
 &  =   c \int_0^t  x_2  e^{-\frac{x_2^2}{s} }
                          s^{\frac12 \al -\frac32}| \ln \, s|     ds\\
& = c x_2^\al \int_{\frac{x_2^2}{t}}^\infty   s^{-\frac12 -\frac12 \al} |\ln \frac{x_2^2}{s} |  e^{-s} ds\\
& \leq   cx_2^\al   \big(   |\ln x_2|  \int_{\frac{x_2^2}{t}}^\infty
s^{-\frac12 -\frac12 \al}   e^{-s}
ds +    \int_{\frac{x_2^2}{t}}^\infty
s^{-\frac12 -\frac12 \al} |\ln s|  e^{-s}
ds  \big).
\end{align*}
Similarly, $J_2^2$ is estimated by
\begin{align*}
J_2^2
& \leq c \int_0^t \frac{x_2}{s^2}  e^{-\frac{x_2^2}{s} } s^{\frac12 \al} \int_{s^\frac12}^1 e^{-\frac{y_1^2}{s}}  | \ln \, y_1|  dy_1  ds\\
& =  c \int_0^t \frac{x_2}{s^2}  e^{-\frac{x_2^2}{s} } s^{\frac12 \al}  s^\frac12 \int^{s^{-\frac12}}_1 e^{-y_1^2} | \ln \,s^{\frac12} y_1 |  dy_1  ds\\
& \leq c \int_0^t  x_2   e^{-\frac{x_2^2}{s} } s^{\frac12 \al -\frac32}   |\ln \, s |  ds\\
& = c x_2^\al \int_{\frac{x_2^2}{t}}^\infty   s^{-\frac12 -\frac12 \al} |\ln \frac{x_2^2}{s} |   e^{-s} ds\\
& \leq c x_2^\al  \big(|\ln x_2|
\int_{\frac{x_2^2}{t}}^\infty
s^{-\frac12 -\frac12 \al}  e^{-s} ds  + \int_{\frac{x_2^2}{t}}^\infty   s^{-\frac12 -\frac12 \al} \ln s  e^{-s} ds  \big) .
\end{align*}

Hence, for $x^2 \leq t$, we have
\begin{align*}
|I_2| \leq c x_2^\al   |\ln x_2|.
\end{align*}

Now, we estimate $I_3$.

\begin{equation}\label{CK-Oct29-100}
I_3 (0,x_2,t) =    \int_{{\mathbb R}}   \frac{z_1}{z_1^2 + x_2^2} g_2(z_1,t)
dz_1 = \int_0^{x_2}   \cdots dz_1 + \int_{x_2}^{2x_2}   \cdots dz_1 +
\int_{2x_2}^1  \cdots dz_1.
\end{equation}
Again, due to the change of variable, the first and second terms are
estimated by
\begin{align*}
\int_0^{x_2}   \cdots  dz_1 +   \int_{x_2}^{2x_2}  \cdots  dz_1
& =\int_0^{x_2} \frac{z_1}{z_1^2 + x_2^2} (g(y_1-z_1,\tau)  - g(y_1 +z_1,\tau) )
dz_1\\
& \leq c \int_0^{x_2} \frac{1}{z_1} z_1^\al dz_1\thickapprox x_2^\al.
\end{align*}

In case that $t  >(2x_2)^2  $, the last term in \eqref{CK-Oct29-100}
is computed as follows:
\begin{align*}
\int_{2x_2}^1  \frac{z_1}{z_1^2 + x_2^2} g(z_1,t)dz_1
&  =  \int_{2x_2}^{t^\frac12}  \frac{z_1}{z_1^2 + x_2^2}(|z_1|^2 +  t)^{\frac12 \al}   \arctan (\frac{z_1}{ t^{\frac12}})^\al    \arctan (\frac{t^\frac12}{z_1})^\al dz_1\\
& \quad +  \int_{t^\frac12}^1  \frac{z_1}{z_1^2 + x_2^2}(|z_1|^2 +  t)^{\frac12 \al}   \arctan (\frac{z_1}{ t^{\frac12}})^\al    \arctan (\frac{t^\frac12}{z_1})^\al dz_1  \\
&  \thickapprox  -\int_{2x_2}^{t^\frac12}  \frac{1}{z_1} (|z_1|^\al +
t^{\frac12 \al} ) (\frac{z_1}{ t^{\frac12}})^\al dz_1
               -  \int_{t^\frac12}^1 \frac{1}{z_1 }(|z_1|^\al +  t^{\frac12 \al} )  (\frac{t^\frac12}{z_1})^\al   dz_1\\
& \geq c  \big(t^{\frac12 \al} |\ln \,t| - x_2^\al - t^{\frac12 \al}).
\end{align*}

Hence, for $x_2^2 \leq t$, we have

\begin{align*}
|I_3(0, x_2, t) | \geq  c  \big(t^{\frac12 \al} |\ln \,t| - x_2^\al - t^{\frac12 \al}).
\end{align*}

Summing up above estimates, for $ x_2 \leq t^\frac12\ll 1$, we
obtain
\begin{align*}
|I_2(0, x_2,t)  + I_3(0,x_2,t)| \geq    c \big(t^{\frac12 \al}
|\ln \, t| - x_2^\al |\ln \, x_2|    - x_2^\al -t^{\frac12 \al}
\big).
\end{align*}
Therefore, we conclude that $u_1 \notin C^{\al, \frac12
\al}(Q^+_1)$, since
\begin{align*}
\| u_1\|_{\dot {\mathcal C}^{\al, \frac12 \al}(Q^+_1(0,0))}& =
\sup_{x,t} \frac{|u(x,t) - u(y,s)|}{(|x-y|^2 + |t-s|)^{\frac12 \al}}
\geq c\sup_{x_1=0, x_2 =t < \frac12 } \frac{|u(0,x_2,t) - u(0,0)|}{(x_2^2 + t)^{\frac12 \al}}\\
& \geq c\sup_{ t < \frac12  } \frac{t^{\frac12 \al} |\ln \, t| -
t^\al |\ln \, t| -t^{\frac{\al}2}  }{t^{\frac12 \al}}= \infty.
\end{align*}

\end{pfthm2}


\section{Proof of Theorem \ref{Theorem1}}\label{iteration}
\setcounter{equation}{0}

In this section, We present the proof of Theorem \ref{Theorem1}.\\
\\
\begin{pfthm1}

We introduce function classes $X(\Om)$ and $X(Q_T)$ defined as
follows:
\begin{align*}
X^\al (\Om) : = {\mathcal C}^{\al } (\overline{\Om})\times {\mathcal
C}^{\al +1}(\overline{\Om}) \times {\mathcal C}_{{\mathcal
D}_{\eta}}^\al (\overline{\Om}),\qquad X^\al (Q_T): = {\mathcal
C}^{\al,\frac12 \al} (\overline{Q_T})\times{\mathcal
C}^{\al+1,\frac12 \al +\frac12}(\overline{Q_T}) \times {\mathcal
C}^{\al,\frac12 \al} (\overline{Q_T})
\end{align*}
with norms 
\begin{align*}
\|(\rho, \theta, u)\|_{ X^\al(\Om)}: &= \| \rho\|_{{\mathcal C}^{\al } (\overline{\Om})} 
+ \| \te\|_{{\mathcal C}^{\al +1}(\overline{\Om})} + \| u\|_{{\mathcal C}_{{\mathcal
D}_{\eta}}^\al (\overline{\Om})}, \\
\|(\rho, \theta, u)\|_{ X^\al(Q_T)}: &= \| \rho\|_{{\mathcal C}^{\al,\frac{\al}2 } (\overline{Q_T})}
+ \| \te\|_{{\mathcal C}^{\al +1}(\overline{Q_T})} + \| u\|_{{\mathcal C}_{{\mathcal
D}_{\eta}}^{\al,\frac{\al}2} (\overline{Q_T})}.
\end{align*}
Let $(\rho_0, \theta_0, u_0)  \in X^\al(\Om)$. We consider
\[
\rho^1_t - \Delta \rho^1=0, \quad \theta^1_t -\De \theta^1  =
0,\quad u^1_t -\De u^1 +\na p^1 =0,\quad{\rm div}\,u^1=0\qquad\mbox{
in }\,\,Q_T
\]
with initial and boundary conditions
\begin{equation}\label{boudnary-aug22-10}
\rho^1(x,0)=\rho_0(x), \qquad\theta^1(x,0)=\theta_0(x), \qquad
u^1(x,0)=u_0(x),
\end{equation}
\begin{equation}\label{boudnary-aug22-20}
\frac{\pa \rho^1}{\pa {\bf n}} =0, \qquad \frac{\pa \theta^1}{\pa
{\bf n}} =0, \qquad u^1=0\qquad \mbox{on }\,\,\pa \Om\times (0, T),
\end{equation}
By Theorem \ref{mainthm-Stokes}, Theorem \ref{heat-domain-100} and Remark \ref{rem-3}, we get
\begin{align*}
\| (\rho^1, \theta^1, u^1)\|_{X^\al(Q_T)}   \leq c  \|( \rho_0,
\theta_0, u_0)\|_{X^\al (\Om)}.
\end{align*}
For $m=1,2,\cdots$ we define iteratively by $(\rho^{m+1},
\theta^{m+1}, u^{m+1})$ a solution of the following equations in
$Q_T$:
\[
\rho^{m+1}_t -\De \rho^{m+1}  =-{\rm div}(u^m\rho^m)+\nabla\cdot
{\mathcal F}^m,
\]
\[
\theta^{m+1}_t -\De \theta^{m+1}  = -u^m\cdot\nabla \theta^m+f^m,
\]
\[
u^{m+1}_t -\De u^{m+1} +\na p^{m+1} =-\nabla\cdot (u^m\otimes u^m)+
G^m,\qquad{\rm div}\, u^{m+1}=0
\]
with the same boundary and initial conditions as
\eqref{boudnary-aug22-10} and \eqref{boudnary-aug22-20}. Here ${\mathcal F}^m,
f^m$ and $G^m$ denote
\[
{\mathcal F}^m:=F(\rho^m, \theta^m, \nabla \theta^m, u^m), \quad
f^m:=f(\rho^m, \theta^m, \nabla \theta^m, u^m), \quad G^m:=G(\rho^m,
\theta^m, \nabla \theta^m, u^m).
\]

We fix $T>0$, which will be specified later. Then, we have
\begin{align}\label{1209number}
\notag &\|(\rho^{m+1}, \te^{m+1}, u^{m+1})\|_{X^\al(Q_T)}\\
\notag & \leq c \Big(\|(\rho_0, \te_0, u_0)\|_{X^\al(\Om)}  + T^\frac12 \|
u^{m} \rho^{m} \|_{ {\mathcal C}^{\al, \frac12 \al}( \overline Q_T) }+
T^\frac12 \| {\mathcal F}^m\|_{ {\mathcal C}^{\al, \frac12 \al}(\overline Q_T) }\\
\notag &  \qquad + \max(T^\frac12, T^{\frac12-\frac{\alpha}{2}}) \| u^{m} \cdot
\na \theta^{m} \|_{ {\mathcal C}^{\al, \frac12 \al}( \overline Q_T) } + \max(T^\frac12, T^{\frac12-\frac{\alpha}{2}})  \| f^m\|_{
{\mathcal C}^{\al, \frac12 \al}(\overline Q_T) }\\
 &\qquad  +
\max(T^\frac12, T^{\frac12-\frac12 \al} ) \|  u^{m} \otimes u^{m} \|_{{\mathcal C}^{\al, \frac12
\al}(\overline Q_T) }
  + \max(T,T^{\frac12-\frac{1}{2} \al}) \| G^m\|_{ {\mathcal C}^{\al,
\frac12 \al}(\overline Q_T) }\Big).
\end{align}
%
Let $M_0 = \frac{c}2 \|( \rho_0, \theta_0,u_0)\|_{ X^\al(\Om)}$, where $c$ is
the constant in \eqref{1209number}. Suppose
that $M$ is a number with $M>M_0$ such that
\[
\| (\rho^m, \theta^m, u^m)\|_{ X^\al(Q_T)} < M.
\]
We then see that
\[
\|  u^m   \rho^m   \|_{  C^{\al, \frac12 \al}(\overline Q_T) } +\| u^{m} \cdot
\na \theta^{m} \|_{ {\mathcal C}^{\al, \frac12 \al}(\overline Q_T) }+\|  u^{m}
\otimes u^{m} \|_{{\mathcal C}^{\al, \frac12 \al}(\overline Q_T) }
\]
\begin{equation}\label{CK-aug-100}
\leq \| u^m \|_{ C^{\al, \frac12 \al}(\overline Q_T) } \| (\rho^m, \theta^m, u^m)\|_{X^\al(Q_T)}\leq cM^2,
\end{equation}
\begin{align}\label{CK-aug-110}
\notag \|F^m\|_{ {\mathcal C}^{\al, \frac12 \al}(Q_T) }+\|f^m\|_{ {\mathcal
C}^{\al, \frac12 \al}(Q_T) }+\|G^m\|_{ {\mathcal C}^{\al, \frac12
\al}(Q_T) }\\
\notag \leq c\bke{\|\nabla {\mathcal F}^m\|_{L^{\infty}(Q_T)}+\|\nabla
f^m\|_{L^{\infty}(Q_T)}+\|\nabla G^m\|_{L^{\infty}(Q_T)}}\|(\rho^m,
\theta^m, u^m)\|_{X(Q_T)}\\
\leq  c\|(\rho^m, \theta^m, u^m)\|^l_{X^\al(Q_T)}\leq  cM^l,
\end{align}

Taking $T $ sufficiently small, due to
\eqref{CK-aug-100}-\eqref{CK-aug-110}, we obtain
\begin{equation}\label{CK-aug23-500}
  \|(\rho^{m+1}, \te^{m+1}, u^{m+1} )\|_{X^\al (Q_T)} < M
\end{equation}
Iteratively, we conclude that \eqref{CK-aug23-500} holds for all
$m$.

Next we will show that the sequence $(\rho^m, \theta^m, u^m)$ are
Cauchy in $X^\al(\overline Q_T)$ for a $T>0$. For convenience, we set
\[
\varrho^m = \rho^m - \rho^{m-1},\quad \Theta^m = \theta^m -
\theta^{m-1},\quad U^m=u^m-u^{m-1},\quad P^{m-1}=p^{m}-p^{m-1}.
\]
We then see that $(\varrho^m, \Theta^m, U^m)$ solve the following
system:
\begin{align*}
\varrho^{m+1}_t -\De \varrho^{m+1} &= {\rm div} \, \big( \varrho^m
u^{m-1} + \rho^m U^{m}\big)+ \nabla\cdot (F^m-F^{m-1}),\\
\Theta^{m+1}_t -\De \Theta^{m+1} & =   U^m \na \theta^m  + u^{m-1}
\na \Theta^{m}+f^m -f^{m-1},\\
U^{m+1}_t -\De U^{m+1}_t - \De U^m + \na P^m &  =-{\rm
div}\,\big(u^m\otimes U^{m}+U^{m}\otimes u^{m-1} \big) +
G^m-G^{m-1},
\end{align*}
with following initial and boundary conditions
\begin{equation*}
\frac{\pa\varrho^{m+1}}{\pa {\bf n}} =0, \quad \frac{\pa
\Theta^{m+1}}{\pa {\bf n}} =0, \quad U^{m+1}=0\qquad \mbox{on
}\,\,\pa \Om\times (0, T),
\end{equation*}
and initial zero conditions, namely
$\varrho^{m+1}(x,0)=\Theta^{m+1}(x,0)=U^{m+1}(x,0)=0$. Then, we have
\begin{align*}
&\| (\varrho^{m+1}, \Theta^{m+1},  U^{m+1} ) \|_{X^\al ( Q_T)}
\le c   \max(T,  T^{\frac12-\frac{\alpha}{2}} ) 
\Big(\| (  \varrho^{m}, \Theta^{m}, U^{m} ) \|_{ X^\al ( Q_T)}\Big)\times\\
& \qquad \Big(\|(\rho^m, \theta^m, u^{m-1}) \|_{X^\al (Q_T)}+\norm{(\rho^m,
\theta^m, u^m)}^{l-1}_{X^\al(Q_T)}+\norm{(\rho^{m-1}, \theta^{m-1},
u^{m-1})}^{l-1}_{X^\al(Q_T)}\Big).
\end{align*}
Choosing  sufficiently small $T$, we obtain
\[
\| ( \varrho^{m+1}, \Theta^{m+1}, U^{m+1} ) \|_{ X^\al ( Q_T)}
\leq \frac{1}{2}  \| (\varrho^{m}, \Theta^{m}, U^{m}) \|_{X^\al ( Q_T)}.
\]
Via the argument of contraction mapping, the constructed sequence is
indeed convergent in $X^\al(Q_T)$, provided that $T$ is sufficiently
small. Let $(\rho, \theta, u)$ be the limit of $(\rho^m, \theta^m,
u^m)$. Then, it is direct that $(\rho, \theta, u)$ is the unique
solution of \eqref{CK-Aug18-9}-\eqref{CK-Aug17-11}. Since its
verification is rather standard, we skip its details.
\end{pfthm1}

As an application of Theorem \ref{Theorem1}, we establish local
well-posedness in H\"older spaces for a mathematical model
describing the dynamics of oxygen, swimming bacteria, and viscous
incompressible fluids in $\bbr^2$. Such a model was proposed by
Tuval et al.\cite{TCDWKG}, formulating the dynamics of swimming
bacteria, {\it Bacillus subtilis}, which is given as
\begin{equation}\label{CK-Aug3-30} \left\{
\begin{array}{ll}
\partial_t n + u \cdot \nabla  n - \Delta n= -\nabla\cdot (\chi (c) n \nabla c),\\
\vspace{-3mm}\\
\partial_t c + u \cdot \nabla c-\Delta c =-k(c) n,\\
\vspace{-3mm}\\
\partial_t u + u\cdot \nabla u -\Delta u +\nabla p=-n \nabla
\phi,\quad \nabla \cdot u=0
\end{array}
\right. \quad\mbox{ in }\,\, Q_{T},
\end{equation}
where $c(t,\,x) : Q_{T} \rightarrow \R^{+}$, $n(t,\,x) : Q_{T}
\rightarrow \R^{+}$, $u(t,\, x) : Q_{T} \rightarrow \R^{d}$ and
$p(t,x) :  Q_{T} \rightarrow \R$ denote the oxygen concentration,
cell concentration, fluid velocity, and scalar pressure,
respectively. Here $\R^+$ indicates the set of non-negative real
numbers.

The nonnegative functions $k(c)$ and $\chi (c)$ denote the oxygen
consumption rate and the aerobic sensitivity, respectively, i.e. $k,
\chi:\R^+\rightarrow\R^+$ such that $k(c)=k(c(x,t))$ and
$\chi(c)=\chi(c(x,t))$. Initial data are given by $(n_0(x), c_0(x),
u_0(x))$ with $n_0(x),\, c_0(x) \geq 0$ and $\nabla \cdot u_0=0$.

There are many known results for the system \eqref{CK-Aug3-30}
regarding existence, regularity and asymptotics. We do not recall
previous results but give some list of reference (see e.g.
\cite{ckl}, \cite{ckl-cpde}, \cite{CFKLM}, \cite{DLM}, \cite{FLM},
\cite{Lorz}, \cite{TCDWKG}, \cite{Win2}, \cite{Win3}). As far as the
authors' concerned, local-wellposedness in H\"older spaces is not
known for the system \eqref{CK-Aug3-30} and it is, however, a direct
consequence of Theorem \ref{Theorem1}. Since $c$ is uniformly
bounded by maximum principle, non-linear terms satisfy
\eqref{CK-Aug17-10} and \eqref{CK-Aug17-10-5} with $l=2$. Therefore,
as an easy consequence of Theorem \ref{Theorem1}, for the system
\eqref{CK-Aug3-30} we have the following:
\begin{theorem}\label{corollary1}
Let the initial data $(n_0, c_0, u_0)$ be given in
$\calC^{\alpha}((\overline{\Omega}))\times
\calC^{\alpha+1}((\overline{\Omega}))\times \calC^{\al}_{{{\mathcal
D}_{\eta}}}(\overline{\Omega})$ for $\alpha\in(0,1)$ with $n_0\geq
0$ and $c_0\geq 0$. Assume that $\chi, k, \chi', k'$ are all
non-negative and
 $\chi$, $k\in C^m(\R^+)$ and $k(0)=0$, $\|
\nabla^l \phi \|_{L^1\cap L^{\infty}}<\infty$ for $1\le |l|\le m$.
There exists $T>0 $ such that unique solutions $(n, c, u)$ of
\eqref{CK-Aug3-30} exist in the class $\calC^{\al,\frac12 \al }
(\overline Q_T)\times \calC^{\al +1,\frac12 \al +\frac12}(\overline Q_T) \times
\calC^{\al,\frac12 \al} (\overline Q_T)$ for any $t<T$.
\end{theorem}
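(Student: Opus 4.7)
The strategy is to recast the system \eqref{CK-Aug3-30} in the general form \eqref{CK-Aug18-9}--\eqref{CK-Aug18-20} by identifying $\rho = n$ and $\theta = c$, and then to verify the hypotheses of Theorem \ref{Theorem1}. With these identifications, the nonlinearities read
\begin{align*}
\mathcal{F}(\rho,\theta,\nabla\theta,u) &= -\chi(\theta)\rho\,\nabla\theta, \\
f(\rho,\theta,\nabla\theta,u) &= -k(\theta)\rho, \\
G(\rho,\theta,\nabla\theta,u) &= -\rho\,\nabla\phi(x).
\end{align*}
The no-flux conditions on $n, c$ and the no-slip condition on $u$ from \eqref{CK-Aug3-30} match \eqref{CK-Aug17-11} exactly, and the prescribed initial data lies in the space demanded by Theorem \ref{Theorem1}.

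Two obstacles prevent a direct application. First, $G$ depends explicitly on $x$ through $\nabla\phi$, whereas Theorem \ref{Theorem1} assumes no $x$-dependence; the assumption $\|\nabla^l\phi\|_{L^1\cap L^\infty}<\infty$ for $1\le |l|\le m$ implies $\nabla\phi\in \calC^\al(\overline{\Omega})$, and multiplying by such a fixed coefficient is bounded on $\calC^{\al,\frac12\al}(\overline{\Omega}\times[0,T])$, so the Picard iteration underlying Theorem \ref{Theorem1} adapts without essential change. Second, and more seriously, $\chi$ and $k$ are only assumed $C^m(\R^+)$ and need not satisfy the global polynomial bounds \eqref{CK-Aug17-10}--\eqref{CK-Aug17-10-5}.

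To address this, I truncate. Set $M_0:=\|c_0\|_{L^\infty(\Omega)}+1$, fix $\zeta\in C_c^\infty(\R)$ with $\zeta\equiv 1$ on $[-M_0,M_0]$, and define $\tilde\chi(s):=\chi(s)\zeta(s)$, $\tilde k(s):=k(s)\zeta(s)$. The truncated versions $\tilde{\mathcal F}, \tilde f, \tilde G$ are $C^1$ with uniformly bounded first derivatives, and a direct check gives
\begin{align*}
|\tilde{\mathcal F}|+|\tilde f|+|\tilde G| &\le C(1+|\rho|+|\theta|+|\nabla\theta|+|u|)^2, \\
|\nabla\tilde{\mathcal F}|+|\nabla\tilde f|+|\nabla\tilde G| &\le C(1+|\rho|+|\theta|+|\nabla\theta|+|u|),
\end{align*}
so \eqref{CK-Aug17-10}--\eqref{CK-Aug17-10-5} hold with $l=2$. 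Theorem \ref{Theorem1} then produces a unique solution $(n,c,u)$ of the truncated system on some interval $[0,T]$ in $\calC^{\al,\frac12\al}(\overline{Q_T})\times \calC^{\al+1,\frac12\al+\frac12}(\overline{Q_T})\times \calC^{\al,\frac12\al}(\overline{Q_T})$.

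It remains to show that the truncation is inactive on $[0,T]$. The $n$-equation is a divergence-form parabolic equation with transport velocity $u+\tilde\chi(c)\nabla c\in L^\infty(Q_T)$, so the parabolic maximum principle yields $n\ge 0$ from $n_0\ge 0$. Since $\tilde k\ge 0$ on $[0,M_0]$ and $\tilde k(0)=k(0)=0$, the maximum principle applied to $c$ then gives $0\le c\le \|c_0\|_{L^\infty}<M_0$ on $Q_T$; consequently $\zeta(c)\equiv 1$ and $(n,c,u)$ satisfies the original system \eqref{CK-Aug3-30}. Uniqueness inherits from the contraction estimate in the proof of Theorem \ref{Theorem1}. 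The main obstacle I expect is the careful justification of the comparison principles for $n$ and $c$ within the available H\"older regularity (in particular, using that $\tilde\chi(c)\nabla c\in \calC^{\al,\frac12\al}$ so that the transport velocity in the $n$-equation is Lipschitz in $x$ and the Maximum Principle applies classically); everything else is a routine verification of hypotheses.
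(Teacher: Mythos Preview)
Your proposal is correct and follows essentially the same approach as the paper. The paper's own argument is extremely terse: it observes that ``since $c$ is uniformly bounded by maximum principle, non-linear terms satisfy \eqref{CK-Aug17-10} and \eqref{CK-Aug17-10-5} with $l=2$'' and then declares the result a direct consequence of Theorem~\ref{Theorem1}, skipping all details. Your truncation-then-remove-truncation argument is precisely the rigorous implementation of that one-line sketch, and you also flag the $x$-dependence of $G$ through $\nabla\phi$, a point the paper passes over in silence.
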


The result of Theorem \ref{corollary1} is a direct consequence of
Theorem \ref{Theorem1}, and we skip its details.

\section*{Appendix}

In this Appendix, we present the proof of Lemma \ref{lemm4}.\\
\\
{\bf Proof of Lemma \ref{lemm4}}

First,  we prove the estimate \eqref{al+0}. Direct computations show
that
\[
\abs{D_x\Lambda_0(f)(x,t) - D_y \Lambda_0(f)(y,t)} =\abs{ \int_0^t
\int_{\R^n} D_z\Ga(z, t-\tau) \big(f(x-z, \tau) - f(y-z, t-\tau)
\big) dzd\tau}
\]
\[
\leq c\| f\|_{L^\infty(0, T; \dot C^\al(\R^n))}|x-y|^\al   \int_0^t
( t-\tau)^{-\frac12 } d\tau
\]
\begin{align}\label{CK-Aug20-100}
\leq c T^\frac12   |x-y|^\al  \| f\|_{L^\infty(0, T; \dot
\calC^\al(\R^n))}.
\end{align}
Hence, we have
\begin{equation}\label{CK-Aug19-30}
\|\Lambda_0(f)\|_{L^\infty(0, T; \dot C^{\al +1} (\R^n))} \leq
cT^{\frac12  } \| f\|_{L^\infty(0, T; \dot \calC^\al(\R^n))}.
\end{equation}
On the other hand, for $s < t$, we obtain
\[
\abs{\Lambda_0(f)(x,t) - \Lambda_0(f)(x,s)}\le\abs{\int_s^t
\int_{\R^n} \Ga(x-z, t-\tau) f(z,\tau)  dzd\tau}
\]
\[
+\abs{\int_0^s \int_{\R^n}    \big( \Ga(x-z, t-\tau) - \Ga(x-z,
s-\tau) \big)      f(z,  \tau)    dzd\tau}= I_1 + I_2.
\]
We first estimate $I_1$.
\[
I_1 \leq \|   f\|_{L^\infty(\R^n \times (0, T))}  \int_s^t
\int_{\R^n} \Ga(z, t-\tau)    dzd\tau
\]
\begin{align*}
=\|   f\|_{L^\infty(\R^n \times (0, T))}  (t-s)\leq
T^{\frac12 -\frac12 \al}\|  f\|_{L^\infty(\R^n \times (0, T))}
(t-s)^{\frac{\al}2 + \frac12}.
\end{align*}
%
For $I_2$, since $ \int_{\R^n}    \big( \Ga(x-z, t-\tau) - \Ga(x-z,
s-\tau) \big) dz =0$,
we have
\[
I_2= \abs{\int_0^s \int_{\R^n}   \big( \Ga(x-z, t-\tau) - \Ga(x-z,
s-\tau) \big) \big(   f(z,  \tau) - f(x, \tau) \big)    dzd\tau}
\]
\[ \leq  c\|   f\|_{L^\infty(0, T; \dot \calC^\al (\R^n))}  \int_0^s
\int_{\R^n} |x -z|^\al   | \Ga(x-z, t-\tau) - \Ga(x-z, s-\tau)|
dzd\tau
\]
\[
\leq c \|   f\|_{L^\infty(0, T; \dot \calC^\al (\R^n))}  \int_0^s
\int_{\R^n} |x-z|^\al   \int_s^t | D_\te \Ga(x-z, \te-\tau)| d\te
dzd\tau
\]
\[
 \leq c\|  f\|_{L^\infty(0, T; \dot \calC^\al (\R^n))}  \int_0^s \int_s^t (\te -\tau)^{\frac{\al}2 -1} d\te   d\tau
 \leq c \|   f\|_{L^\infty(0, T;\dot \calC^\al (\R^n))} \big(
t^{\frac{\al}2 + 1}-     s^{\frac{\al}2 +1} \big).
\]
By mean-value theorem, there is $\xi \in (s,t)$ such that
\begin{align*}
t^{\frac{\al}2 + 1}-     s^{\frac{\al}2 +1} = (\frac{\al}2 +1)
\xi^{\frac{\al}2}(t-s) \leq c T^\frac12 (t-s)^{\frac{\al}2 +
\frac12}.
\end{align*}
Hence, we have
\begin{align*}
I_2 \leq c  T^\frac12  \|
f\|_{L^\infty(0, T;\dot \calC^\al (\R^n))}.
\end{align*}

Combining above estimates, we obtain
\begin{equation}\label{CK-june17-110}
\|\Lambda_0(f)\|_{L^\infty(\R^n; \dot \calC^{\frac12 +\frac{\al}2} [0,
T])} \leq \max (T^\frac12, T^{\frac12 -\frac{\al}2}) \|
f\|_{L^\infty(\R^n ;  \calC^{\frac{\al}2}[0, T])}.
\end{equation}
Via \eqref{CK-Aug19-30} and \eqref{CK-june17-110}, we obtain
\eqref{al+0}.

We prove the estimate \eqref{al+0-2}. Due to
\eqref{CK-Aug20-100}, it is direct that
\begin{equation}\label{CK-Aug20-110}
\|\nabla\Lambda_0(f)\|_{L^\infty(0, T; \dot \calC^{\al} (\R^n))} \leq
T^\frac12 \| f\|_{L^\infty(0, T; \dot \calC^\al(\R^n))}.
\end{equation}

Therefore, it suffices to show
\begin{equation}\label{CK-Aug20-120}
\|\nabla\Lambda_0(f)\|_{L^\infty(\R^n; \dot \calC^{ \frac{\al}2} [0,
T])} \leq
 T^\frac12  \| f\|_{L^\infty(\R^n
\times \dot {\mathcal C}^{\frac{\al}2}[0, T])}.
\end{equation}
Indeed,  we note that for $s < t$,
\[
\abs{\nabla\Lambda_0(f)(x,t) -
\nabla\Lambda_0(f)(x,s)}\le\abs{\int_s^t \int_{\R^n} \nabla_x
\Ga(x-z, t-\tau) f(z,\tau)  dzd\tau}
\]
\[
+\abs{\int_0^s \int_{\R^n}    \big( \nabla_x \Ga(x-z, t-\tau) - D_x
\Ga(x-z, s-\tau) \big)      f(z,  \tau)    dzd\tau}= J_1 + J_2.
\]
We first estimate $J_1$. Sine $\int_{{\mathbb R}^n} D_z \Ga(z, \tau)
dz =0$ for $\tau > 0$, using change of variables, we have
\begin{align*} 
  J_1 &\leq | \int_s^t
\int_{\R^n} D_z \Ga(z, t-\tau) \big( f(x-z,\tau) - f(x,\tau)   dzd\tau|\\
  & \leq \|  f\|_{L^\infty(0, T; \dot \calC^\al (\R^n) )}  |
\int_s^t
\int_{\R^n} |D_z \Ga(z, t-\tau)| |z|^\al   dzd\tau\\
  &  \leq  \|  f\|_{L^\infty(0, T; \dot \calC^\al (\R^n) )}  |
\int_s^t
  (t -\tau)^{-\frac12 +\frac{\al}2}  d\tau\\
  &  \leq  \|  f\|_{L^\infty(0, T; \dot \calC^\al (\R^n) )} (t-s)^{\frac12 +\frac{\al}2}\\
&  \leq  T^\frac12  \|  f\|_{L^\infty(0, T; \dot \calC^\al (\R^n) )}
(t-s)^{ \frac{\al}2}.
\end{align*}
Using $ \int_{\R^n}    \big( D_x \Ga(x-z, t-\tau) - D_x \Ga(x-z,
s-\tau) \big) dz =0$, we estimate $J_2$ as follows:
\[
J_2\le \|  f\|_{L^\infty(0, T; \dot \calC^\al (\R^n))}  \int_0^s
\int_{\R^n} |x -z|^\al   | D_x \Ga(x-z, t-\tau) - D_x \Ga(x-z,
s-\tau)| dzd\tau
\]
\[
\leq  \|  f\|_{L^\infty(0, T; \dot \calC^\al (\R^n))}  \int_0^s
\int_{\R^n} |x-z|^\al   \int_s^t | D_\te  D_x \Ga(x-z, \te-\tau)|
d\te dzd\tau
\]
\[
 \leq \|  f\|_{L^\infty(0, T; \dot \calC^\al (\R^n))}  \int_0^s \int_s^t (\te -\tau)^{\frac{\al}2 -\frac32} d\te   d\tau
\]
\[
=  \|  f\|_{L^\infty(0, T; \dot \calC^\al (\R^n))}\int_0^s \big(
(s-\tau)^{\frac{\al}2 -\frac12}-     (t-\tau)^{\frac{\al}2 -\frac12}
\big) d\tau
\]
\begin{align*}
\leq  c T^\frac12   \| \tilde f\|_{L^\infty(0, T; \dot C^\al (\R^n))}
(t-s)^{\frac{\al}2}.
\end{align*}
This implies the estimate \eqref{CK-Aug20-120}, and therefore,
together with \eqref{CK-Aug20-110}, we obtain \eqref{al+0-2}.

Next, we prove the estimate 
\eqref{al+2-2}.
Let $\hat f(\xi),\xi\in\mathbb{R}^{n}$ denote the Fourier
transform of $f(x),x\in\mathbb{R}^n$. Let
${\mathcal S} ({\mathbb R}^{n})$ denote the \emph{Schwartz space}
on $\mathbb R^{n}$ and let ${\mathcal S}'({\mathbb R}^{n})$ be
the dual space of the Schwartz space ${\mathcal S}({\mathbb
R}^{n})$. Fix a Schwartz function $\psi\in {\mathcal S} ({\mathbb
R}^{n})$ satisfying $\hat{\psi}(\xi) > 0$ on $\frac12 < |\xi|
  < 2$, $\hat{\psi}(\xi)=0$ elsewhere, and
$\sum_{j=-\infty}^{\infty} \hat{\psi}(2^{-j}\xi) =1$
for $\xi \neq 0$. Let
\begin{align*}
\widehat{\psi_j}(\xi) &:= \widehat{\psi}(2^{-j} \xi ), \qquad (j =
0, \pm 1, \pm 2 , \cdots).
\end{align*}
Note that
\begin{align*}
\dot {\mathcal C}^{\al } (\R^{n}) &= \{ f \in  {\mathcal
S}'({\mathbb R}^{n}) \, | \, \sup_{-\infty < j < \infty} 2^{\al j}
\| f * \psi_j\|_{L^\infty (\R^{n})}<\infty \},
\end{align*}
where $*$ is convolution in $\R^{n}$.  Let $\Psi = \psi_{-1}  +
\psi_0 +\psi_1$  and $\Psi_j(\xi) = \Psi(2^{-j} \xi)$ such that
$supp \, \Psi_j \subset \{ 2^{-j -2} < |\xi| < 2^{-j +2} \}$ and
$\Psi \equiv 1$ in  $2^{j -1} < |\xi|   < 2^{j +1}$. We observe that
\begin{align*}
\widehat{  \na \La_0 f * \psi_j}(\xi, t)
& = \int_0^t\xi e^{-(t -\tau) |\xi|^2}  \hat f(\xi,\tau) \hat \psi_j (\xi) d\tau\\
& =\int_0^t\xi \Phi_j(\xi) e^{-(t -\tau) |\xi|^2}  \hat f(\xi,\tau)
\hat \psi_j (\xi) d\tau.
\end{align*}
Note that the $L^\infty$-multiplier norms of  $\xi \Phi_j(\xi) e^{-(t -\tau) |\xi|^2}$ and $2^j\xi \Phi(\xi) e^{-(t -\tau)  2^{2j}|\xi|^2}$ are same (see Theorem 6.1.3 in \cite{BL}). By Lemma \ref{multiplier2},
 the $L^\infty$-multiplier norm of   $\xi \Phi(\xi) e^{-(t -\tau)  2^{2j}|\xi|^2}$ is dominated  by $e^{-\frac18(t -\tau) 2^{2j}}$.
Hence, we have
\begin{align}\label{CK-Sep11-100}
\notag  \|    \na \La_0 f * \psi_j(t)\|_{L^\infty(\R^{n} )}
&\leq  \int_0^t  \| {\mathcal F}^{-1} ( \widehat{ \xi \Phi_j(\xi) e^{-(t -\tau) |\xi|^2}   \hat f  \hat \psi_j})(\tau)\|_{L^\infty(\R^{n})}d\tau\\
\notag &\leq  \int_0^t 2^j  e^{-\frac18(t -\tau) 2^{2j}} \|   (f *   \psi_j)(\tau)\|_{L^\infty(\R^{n})} d\tau\\
\notag &\leq  2^{( 1 -\al )j}\int_0^t   e^{-\frac18(t -\tau) 2^{2j}}  \|    f(\tau)\|_{\dot{\mathcal C}^\al(\R^n)} d\tau \\
&\leq  2^{-( 1 +\al )j}  \|    f\|_{L^\infty (0, T; \dot{\mathcal C}^\al(\R^n) )} \int_0^{2^{2j}t}   e^{-\frac18\tau }   d\tau.
\end{align}
Hence, we have
\begin{align}\label{CK-Aug28-100}
 \|   \na \La_0 f\|_{L^\infty( 0, T; \dot {\mathcal C}^{\al +1} (\R^{n} ))}
&\leq  c   \|    f\|_{L^\infty (0, T; \dot{\mathcal C}^\al(\R^n) )}.
\end{align}

By the same argument of \eqref{CK-Aug20-120}, we have
\begin{equation}\label{CK-Aug20-120-3}
\|\nabla\Lambda_0(f)\|_{L^\infty(\R^n; \dot \calC^{ \frac{\al}2 +\frac12} [0,
T])} \leq
 c \| f\|_{L^\infty(\R^n
\times \dot \calC^{\frac{\al}2}[0, ]))}.
\end{equation}
By \eqref{CK-Aug28-100} and \eqref{CK-Aug20-120-3}, we obtain  \eqref{al+2-2}.

For \eqref{al+4}, note that  for $\ep < 1$, we have
\begin{align*}
\int_0^{2^{2j}t}   e^{-\frac18\tau }   d\tau \leq c \int_0^{2^{2j}t} \tau^{-\frac12 -\frac12 \ep}   d\tau  = c (2^{2j}t)^{\frac12-\frac12\ep}.
\end{align*}
Hence, from \eqref{CK-Sep11-100}, we have
\begin{align}\label{CK-Aug28-100-1}
 \|   \na \La_0 f\|_{L^\infty( 0, T; \dot {\mathcal C}^{\al +\ep} (\R^{n} ))}
&\leq  c  T^{\frac12 -\frac12\ep} \|    f\|_{L^\infty (0, T; \dot{\mathcal C}^\al(\R^n) )}.
\end{align}
By the same argument of \eqref{CK-Aug20-120}, we have
\begin{equation}\label{CK-Aug20-120-3-1}
\|\nabla\Lambda_0(f)\|_{L^\infty(\R^n; \dot C^{ \frac{\al}2 +\frac12 \ep} [0,
T])} \leq
 c  T^{\frac12  -\frac12 \ep} \|    f\|_{L^\infty (0, T; \dot{\mathcal C}^\al(\R^n) )}.
\end{equation}
By \eqref{CK-Aug28-100-1} and \eqref{CK-Aug20-120-3-1}, we obtain
\eqref{al+4}. This completes the proof.
\qed

\begin{lemma}\label{multiplier2}
Let $\rho_{tj}(\xi) = \xi \Psi_j ( \xi) e^{-t|\xi|^2}$ for each integer
$j$. Then $\rho_{tj}( \xi)$ is a $L^\infty(\R)$-multiplier with the
finite norm $M(t,j)$. Moreover for $t
> 0$
\begin{align}\label{multiplier2_2}
M(t,j) & \leq c e^{-\frac14 t2^{2 j}}\sum_{0 \leq i \leq n} t^i
2^{2 ij} \leq c e^{-\frac18 t2^{2 j}}.
\end{align}
\end{lemma}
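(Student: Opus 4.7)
The plan is to identify the $L^\infty$-multiplier norm $M(t,j)$ with $\|\mathcal{F}^{-1}[\rho_{tj}]\|_{L^1(\R^n)}$; this is the standard identity for Fourier multipliers whose symbol is the transform of an $L^1$ function, which is automatic here since $\rho_{tj}$ is smooth and compactly supported in $\xi$, so $\mathcal{F}^{-1}[\rho_{tj}]$ is Schwartz. The whole problem thus reduces to bounding this $L^1$ norm in terms of $t$ and $j$.

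First I would extract spatial decay through the standard integration by parts identity
\begin{equation*}
(1+|x|^2)^N \mathcal{F}^{-1}[\rho_{tj}](x) = \mathcal{F}^{-1}\bigl[(1-\Delta_\xi)^N \rho_{tj}\bigr](x),
\end{equation*}
valid for every nonnegative integer $N$, which yields
\begin{equation*}
(1+|x|^2)^N |\mathcal{F}^{-1}[\rho_{tj}](x)| \leq C\,\bigl\|(1-\Delta_\xi)^N \rho_{tj}\bigr\|_{L^1(\R^n_\xi)}.
\end{equation*}
Taking $N > n/2$ makes $(1+|x|^2)^{-N}$ integrable in $x$, so that $\|\mathcal{F}^{-1}[\rho_{tj}]\|_{L^1} \leq C_N\,\bigl\|(1-\Delta_\xi)^N\rho_{tj}\bigr\|_{L^1(\R^n_\xi)}$.

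Next I would estimate $(1-\Delta_\xi)^N\rho_{tj}$ pointwise via Leibniz's rule. The three ingredients are: $|\partial^\alpha \Psi_j(\xi)| \leq C_\alpha\, 2^{-j|\alpha|}$ from $\Psi_j(\xi)=\Psi(2^{-j}\xi)$; the formula $\partial^\alpha e^{-t|\xi|^2} = q_\alpha(t^{1/2}\xi)\,e^{-t|\xi|^2}$ for some polynomial $q_\alpha$ of degree $|\alpha|$; and $|\xi| \leq 4\cdot 2^{j}$ on $\mathrm{supp}\,\Psi_j$. Combining them, I would deduce a pointwise bound of the form
\begin{equation*}
\bigl|(1-\Delta_\xi)^N\rho_{tj}(\xi)\bigr| \leq C_N \Bigl(\sum_{i=0}^{N} (t 2^{2j})^i\Bigr)\,e^{-t|\xi|^2}\,\mathbf{1}_{\mathrm{supp}\,\Psi_j}(\xi).
\end{equation*}
Since $|\xi|^2 \geq 2^{2j-4}$ on $\mathrm{supp}\,\Psi_j$, I would split $e^{-t|\xi|^2}$ so as to extract a clean exponential factor $e^{-t 2^{2j}/4}$, with the residual exponential used to absorb both the polynomial in $s=t 2^{2j}$ and the $2^{jn}$ volume of the annulus. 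This produces $M(t,j) \leq C\, e^{-t 2^{2j}/4}\sum_{i=0}^n t^i 2^{2ij}$, and the final bound $C e^{-t2^{2j}/8}$ follows immediately because $e^{-s/8}\sum_{i=0}^n s^i$ is uniformly bounded for $s\geq 0$.

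The main obstacle will be the bookkeeping of the several $2^j$ factors arising from the gradient $\xi$ in the symbol, the derivatives of the bump function $\Psi_j$, the polynomial $q_\alpha(t^{1/2}\xi)$ evaluated on $\mathrm{supp}\,\Psi_j$, and the volume of the annulus. The guiding principle is that on $\mathrm{supp}\,\Psi_j$ each of these quantities is a product of integer powers of the scale-invariant parameter $s = t 2^{2j}$ with the ambient Gaussian $e^{-t|\xi|^2}$, so that everything repackages into a polynomial in $s$ multiplied by a clean Gaussian decay $e^{-c t 2^{2j}}$ with $c \geq 1/4$.
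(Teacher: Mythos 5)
Your general strategy --- bound the $L^\infty$-multiplier norm by $\|\mathcal{F}^{-1}[\rho_{tj}]\|_{L^1}$ and control the latter via smoothness of the symbol --- is the same mechanism behind Bergh--L\"ofstr\"om Lemma~6.1.5, which the paper invokes. But there is a genuine gap: you never rescale the frequency variable, and as a result the volume of $\mathrm{supp}\,\Psi_j$ is not controlled. Since $\Psi_j(\xi)=\Psi(2^{-j}\xi)$, the support is an annulus of Lebesgue measure $\sim 2^{jn}$, so when you pass from your pointwise bound on $(1-\Delta_\xi)^N\rho_{tj}$ to the $L^1(d\xi)$ norm that (after multiplying by the integrable weight $(1+|x|^2)^{-N}$) controls $\|\mathcal{F}^{-1}[\rho_{tj}]\|_{L^1}$, you pick up an extra factor of $2^{jn}$. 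You assert this is ``absorbed'' by the residual Gaussian, but that fails: writing $s=t2^{2j}$, in the balanced regime $2^j\approx t^{-1/2}$ one has $s\approx 1$, so $e^{-cs}=\Theta(1)$ while $2^{jn}\approx t^{-n/2}$, which blows up as $t\to 0$. Thus your method cannot produce a constant that is uniform in $t$ and $j$. (There is also a smaller bookkeeping slip: the zeroth-order term of $(1-\Delta_\xi)^N\rho_{tj}$ is $\rho_{tj}$ itself, and $|\rho_{tj}(\xi)|\approx 2^j e^{-t|\xi|^2}$ on the support because of the factor $\xi$, so your claimed pointwise bound $C_N\bigl(\sum_i(t2^{2j})^i\bigr)e^{-t|\xi|^2}$ is missing this $2^j$.)

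The paper avoids all of this by one extra step at the outset: it uses the dilation invariance of the $L^\infty$-multiplier norm (BL Theorem~6.1.3) to pass to the rescaled symbol $\rho_{tj}(2^j\xi)=2^j\xi\Psi(\xi)e^{-t2^{2j}|\xi|^2}$, whose support is the $j$-independent annulus $\{1/4<|\xi|<4\}$. After that, the Mikhlin-type criterion (BL Lemma~6.1.5, essentially your $(1+|x|^2)^N$ argument) produces no volume factor, and the derivatives of the rescaled symbol contribute only powers of $s=t2^{2j}$, which the Gaussian $e^{-s/16}$ genuinely absorbs, yielding the claimed $e^{-s/8}$. Equivalently, you could keep $\rho_{tj}$ unrescaled but replace $(1+|x|^2)^N$ by the scale-adapted weight $(1+2^{2j}|x|^2)^N$: the conjugate operator on the symbol side becomes $(1-2^{2j}\Delta_\xi)^N$, whose factors $2^{2j}$ cancel the $2^{-j}$ gained per derivative of $\Psi_j$, while $\int(1+2^{2j}|x|^2)^{-N}dx \sim 2^{-jn}$ cancels the annulus volume. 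Either way, the rescaling is not mere bookkeeping; it is the step that makes the argument close.
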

\begin{proof}
The $L^\infty(\R^n)$-multiplier norms $M(t,j)$ of $\rho_{tj}(\xi) $ is
equal to the $L^\infty(\R^n)$-multiplier norm of $\rho_{tj}^{'}(\xi)  := 2^j \xi
\Psi^{'}(\xi) e^{-t2^{2 j} |\xi|^2}, \,\, \Psi^{'} (\xi) = \xi \Psi(\xi)$(See Theorem 6.1.3 in
\cite{BL}). Now, we  make use of the Lemma 6.1.5 of
\cite{BL}. Let   $\be=(\be_1, \be_2, \cdots, \be_n) \in   ({\mathbb N} \cup \{0\})^n$.
Since $supp\;(\Psi') \subset \{\xi \in \R^n \, | \,  \frac14 <  |\xi| < 4
\}$, we have
\begin{align*}
|D^\be_{\xi} \rho_{tj}^{'}(\xi)| &\leq c \sum_{0 \leq i \leq
|\be|} t^i 2^{2 ij} e^{-\frac14t 2^{2 j}} \chi_{\frac14 < |\xi|
< 4} (\xi) \\
&\leq c  e^{-\frac18 t 2^{2 j}} \chi_{\frac14 < |\xi|
< 4} (\xi),
\end{align*}
where $\chi_A$ is the characteristic function on a set $A$.  Hence, applying Lemma 6.1.5 of
\cite{BL},  we completes the proof.
\end{proof}


\section*{Acknowledgements}
T. Chang is partially supported by NRF-2014R1A1A3A04049515 and K. Kang is partially
supported by NRF-2014R1A2A1A11051161 and
NRF20151009350. 

\begin{equation*}
\left.
\begin{array}{cc}
{\mbox{Tongkeun Chang}}\qquad&\qquad {\mbox{Kyungkeun Kang}}\\
{\mbox{Department of Mathematics }}\qquad&\qquad
 {\mbox{Department of Mathematics}} \\
{\mbox{Chosun University
}}\qquad&\qquad{\mbox{Yonsei University}}\\
{\mbox{ Kwangju 501-759, Republic of Korea}}\qquad&\qquad{\mbox{Seoul, Republic of Korea}}\\
{\mbox{chang7357@yonsei.ac.kr }}\qquad&\qquad
{\mbox{kkang@yonsei.ac.kr }}
\end{array}\right.
\end{equation*}



\begin{thebibliography}{00}



\bibitem{BL} J. Bergh and J. Lofstr\"{o}m,
                       {\it Interpolation spaces, An introduction},  Springer-Verlag, Berlin(1976).

\bibitem{So3} M. Sh. Birman, S. Hildebrandt, V. A. Solonnikov and N. N. Uraltseva,
{\rm  Nonlinear Problems in Mathematical Physics and Rlated Topics. I},
 International Mathematical Series (New York), 1. Kluwer Academic 2002.












\bibitem{ckl} M. Chae, K. Kang and J. Lee,
 {\it On Existence of the smooth solutions to the Coupled Chemotaxis-Fluid Equations}, Discrete Cont. Dyn. Syst. A, 33(6); 2271--2297( 2013).
\bibitem{ckl-cpde} M. Chae, K. Kang and J. Lee,
{\it Global existence and temporal decay in Keller-Segel models
coupled to fiuid equations}, to appear in Comm. Partial Differential
Equations.



\bibitem{CC}
T. Chang and H. Choe, {\it  Maximum modulus estimate for the solution of the Stokes equations,}
J. Differential Equations  {\bf 254},  no. 7, 2682-2704(2013).


\bibitem{CJ} T. Chang and B.  Jin, {\it Initial and Boundary Value Problem of the  unsteady  Navier-Stokes system in the  half space with H$\ddot{\rm O}$lder continuous boundary data  },  {\bf 433},  J. Math. Anal. Appl.  no.2, 1846-1869(2016).


\bibitem{CFKLM} A. Chertock, K. Fellner, A. Kurganov, A. Lorz, and
P. A. Markowich, {\it Sinking, merging and stationary plumes in a
coupled chemotaxis-fluid model: a high-resolution numerical
approach}, J. Fluid Mech., 694; 155--190(2012).




\bibitem{DLM} R. Duan, A. Lorz, and P. Markowich, {\it Global
solutions to the coupled chemotaxis-fluid equations}, Comm. Partial
Diff. Equations, 35(9); 1635--1673(2010).







\bibitem{FMM} E. Fabes, O. Mendez and M. Mitrea, {\it  Boundary Layers on Sobolev-Besov Spaces and Poisson's Equation for the Laplacian in Lipschitz Domains  }, J. of Funct. Anal, {\bf 159}, 323-368(1998).

\bibitem{FLM} M.D. Francesco, A. Lorz, and P. Markowich, {\it
Chemotaxis-fluid coupled model for swimming bacteria with nonlinear
diffusion: global existence and asymptotic behavior}, Discrete
Cont. Dyn. Syst. A, 28(4); 1437-53(2010).

\bibitem{shimizu} Y. Giga, S. Matsui, and Y. Shimizu, {\it On estimates in Hardy spaces for the Stokes flow in a half space}, Math. Z.  231,  no. 2, 383-396(1999).


%
%


%
%
%


%

%

\bibitem{JK} D. Jerison and C. E. Kenig, {\it The
                              inhomogeneous Dirichlet Problem in Lipschitz
                              domains}, J. of Funct. Anal,
                              $\bf{130}$, 161-219 (1995).

\bibitem{K} K. Kang, {\it On boundary regularity of the Navier-Stokes equations},
                  Comm. Partial. Differential Equations, {\bf 29}, no 7-8, 955-987(2004).

\bibitem{KS1} H. Koch and V. A.  Solonnikov,
{\it $L_p$-Estimates for a solution to the nonstationary Stokes
equations,} Journal of Mathematical Sciences, Vol. 106, No.3,
3042-3072(2001).

\bibitem{Lady}
O.A. Lady$\check{\rm z}$enskaja, V.A. Solonnikov and N.N. Ura${\rm l}ʹ$ceva, {\it Linear and Quasilinear Equations of Parabolic Type}, (Russian) Translated from the Russian by S. Smith. Translations of Mathematical Monographs, Vol. 23 American Mathematical Society, Providence, R.I. 1968.
\bibitem{Lorz} A. Lorz, {\it Coupled chemotaxis fluid model},
Math. Models and Meth. in Appl. Sci., 20(6):987--1004, 2010.


\bibitem{maremonti} P. Maremonti, {\it Stokes and Navier-Stokes problems in the half-space: existence and uniqueness of solutions non converging to a limit at infinity,}  Zap. Nauchn. Sem. POMI.  39, 176-240, 362(2008);  translation in  J. Math. Sci. (N. Y.)  159,  no. 4, 486-523(2009).




\bibitem{maremonti3}
P. Maremonti and G. Staria, {\it On the nonstationary Stokes equations in half-space with continuous initial data},  Zap. Nauchn. Sem. POMI. 33, 118-167,  295(2003); translation in  J. Math. Sci. (N. Y.)  127,  no. 2, 1886-1914(2005).



%
%
%

%




\bibitem{sol1}V.A. Solonnikov, {\it Estimates for  solutions of  nonstationary Navier-Stokes equations,}
Zap. Nau$\check{\rm c}$n. Sem. LOMI.  38, 153-231(1973); translation in  J. Math. Sci. (N. Y.)  8, no. 4, 467-529(1977).


\bibitem{So} V. A, Solonnikov, {\it On the theory of nonstationary hydrodinamic potential },
Lecture notes in pure and applied mathematics, The Navier-Stokes equadtions: Theory and numerical methods, 113-129(2002).

\bibitem{So2}
 V. A. Solonnikov, {\it On the theory of nonstationary hydrodynamic potentials.}  The Navier-Stokes equations: theory and numerical methods(Varenna, 2000),  113–129, Lecture Notes in Pure and Appl. Math., 223, Dekker, New York, 2002.



\bibitem{sol3}V.A. Solonnikov, {\it On nonstationary Stokes problem and Navier-Stokes problem in a half-space with initial data nondecreasing at infinity}, Function theory and applications. J. Math. Sci. (N. Y.)  114,  no. 5, 1726-1740(2003).


\bibitem{sol2}V.A. Solonnikov, {\it An initial-boundary value problem for a generalized system of Stokes equations in a half-space},  Zap.Nau$\check{\rm c}$n. Sem. POMI.  224-275, 271(2000);  translation in  J. Math. Sci. (N. Y.)  115,  no. 6, 2832-2861(2003).




%




%
%
\bibitem{TCDWKG} I. Tuval, L. Cisneros, C. Dombrowski, C. W.
Wolgemuth, J. O. Kessler, and R. E. Goldstein, {\it Bacterial
swimming and oxygen transport near contact lines,} PNAS, 102(7);
2277--2282, 2005.
\bibitem{Win2} M. Winkler, {\it Global large data solutions in a
chemotaxis-(Navier-)Stokes system modeling cellular swimming in fluid drops}
Comm. Partial Diff. Equations, 37(2);319--351, 2012.

\bibitem{Win3} M. Winkler, {\it Stabilization in a two-dimensional
chemotaxis-Navier-Stokes system}, Arch. Ration. Mech. Anal,
211(2);455-487, 2014.

\end{thebibliography}
\end{document}